\numberwithin{equation}{section}
\theoremstyle{plain}
\newtheorem{theorem}{Theorem}[section]
\newtheorem{corollary}[theorem]{Corollary}
\newtheorem{lemma}[theorem]{Lemma}
\newtheorem{proposition}[theorem]{Proposition}
\newtheorem{notation}[theorem]{Notation}
\newtheorem{definition}[theorem]{Definition}
\newtheorem{remark}[theorem]{Remark}
\newcommand{\ot}{\otimes}
\newcommand{\wt}{\widetilde}
\newcommand{\wV}{\wt{V}}
\DeclareMathOperator{\dis}{dist}
\begin{document}

\title[ Regular covariant representations and their Wold-type decomposition ]
{Regular covariant representations and their Wold-type decomposition}

\date{\today}
\author[Rohilla]{Azad Rohilla}
\address{Centre for Mathematical and Financial Computing, Department of Mathematics, The LNM Institute of Information Technology, Rupa ki Nangal, Post-Sumel, Via-Jamdoli
	Jaipur-302031,
	(Rajasthan) INDIA}
\email{18pmt005@lnmiit.ac.in}
\author[Trivedi]{Harsh Trivedi}
\address{Centre for Mathematical and Financial Computing, Department of Mathematics, The LNM Institute of Information Technology, Rupa ki Nangal, Post-Sumel, Via-Jamdoli
	Jaipur-302031,
	(Rajasthan) INDIA}
\email{harsh.trivedi@lnmiit.ac.in, trivediharsh26@gmail.com}
	\author[Veerabathiran]{Shankar Veerabathiran \textsuperscript{*}}
\address{Indian Statistical Institute, Statistics and Mathematics Unit, 8th Mile, Mysore Road,
	Bangalore, 560059, India}
\email{shankarunom@gmail.com}
\thanks{*corresponding author}


\begin{abstract}
	Olofsson introduced a growth condition regarding elements of an orbit for an expansive operator and generalized Richter's wandering subspace theorem. Later on, using the Moore-Penrose inverse, Ezzahraoui, Mbekhta, and Zerouali extended the growth condition and obtained a Shimorin-Wold-type decomposition. Shimorin-Wold-type decomposition for completely bounded covariant representations, which are close to isometric representations, is obtained in \cite{HV19}. This paper extends this decomposition for regular, completely bounded covariant representation having reduced minimum modulus $\geq 1$  that satisfies the growth condition. To prove the decomposition, we introduce the terms regular, algebraic core, and reduced minimum modulus in the completely bounded covariant representation setting and work out several fundamental results. Consequently, we shall analyze the weighted unilateral shift introduced by Muhly and Solel and introduce and explore a non-commutative weighted bilateral shift.
\end{abstract}

\keywords{Covariant representations, Hilbert $C^*$-modules,	wandering subspaces, Wold decomposition, Algebraic core, Moore-Penrose inverse, regular operator and reduced minimum modulus.}
\subjclass[2010]{  47A15, 47L30, 46L08,  47L55, 47L80, 47B38.}

\maketitle
\section{Introduction}

The classical result of Wold \cite{W}  asserts that given isometry on a Hilbert space is either a unitary, a shift, or  uniquely breaks as a direct summand of them. Beurling \cite{BA} proved that every $M_{z}$-invariant closed subspace of the Hardy space $ H^{2}(\mathbb{D})$ is a copy of an inner function. One of the well-known implications of the Wold decomposition is that when the  unitary part is zero, the Wold-decomposition gives uniqueness of the wandering subspace for a shift.  Halmos {\cite{H61}} proved a wandering subspace theorem, which is an abstraction of the Beurling's theorem, that characterized all the invariant subspaces of a shift. Richter \cite{R88} proved a wandering subspace theorem for an analytic concave operator which satisfies the growth condition that was explicitly mentioned and generalized by Olofsson in {\cite{O05}}. After that, Shimorin \cite{S01} provided an elementary proof of Richter's theorem by giving a Wold-type decomposition for concave operators that can be considered close to an isometry. Olofsson {\cite{O05}} extended the Richter's wandering subspace theorem as follows:
\begin{theorem}(Olofsson)\label{Oa}
	Suppose $V$ is an analytic bounded linear map defined on Hilbert space $\mathcal{H}$ such that \begin{itemize}
		\item [(i)] $V$ is an expansive operator,
		\item [(ii)] there exist some positive numbers $d_{m} , d $  such that $\sum_{m \geq 2 } \frac{1}{d_{k}} =\infty$ and \begin{equation*}
		\|V^{m}h\|^{2}\leq d\|h\|^{2}+ d_{m}(\|Vh\|^{2}-\|h\|^{2})  , \quad h \in \mathcal{H}.
		\end{equation*} Then $V$ has the wandering subspace property.
	\end{itemize}
\end{theorem}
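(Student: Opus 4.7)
The plan is to establish $\mathcal{H} = \bigvee_{n \geq 0} V^n \mathcal{E}$ where $\mathcal{E} := \mathcal{H} \ominus V\mathcal{H}$. Because $V$ is expansive, $V^*V \geq I$ is invertible and the range $V\mathcal{H}$ is closed; the canonical left inverse $L := (V^*V)^{-1} V^*$ satisfies $LV = I$, and $P := VL$ is the orthogonal projection of $\mathcal{H}$ onto $V\mathcal{H}$. For $h \in \mathcal{H}$, set $h_n := L^n h$ and $e_n := (I - P) L^n h \in \mathcal{E}$; iterating the orthogonal splitting $\mathcal{H} = \mathcal{E} \oplus V\mathcal{H}$ gives
\[
h \;=\; \sum_{k=0}^{n-1} V^k e_k \;+\; V^n h_n, \qquad n \geq 1.
\]
Hence the theorem reduces to showing $V^n h_n \to 0$ in norm for every $h \in \mathcal{H}$.

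The first batch of estimates follows from expansiveness. Orthogonality in $h_{n-1} = e_{n-1} + V h_n$ together with $\|V h_n\| \geq \|h_n\|$ yields that $\|h_n\|^2$ is non-increasing with limit $\alpha^2 \geq 0$, that $\sum_n \|e_n\|^2 \leq \|h\|^2$, and that
\[
\|V h_n\|^2 - \|h_n\|^2 \;=\; \bigl(\|h_{n-1}\|^2 - \|h_n\|^2\bigr) - \|e_{n-1}\|^2 \;\longrightarrow\; 0.
\]
Substituting $h_n$ into the growth hypothesis yields $\|V^m h_n\|^2 \leq d\,\|h\|^2 + d_m\bigl(\|V h_n\|^2 - \|h_n\|^2\bigr)$ for every $m, n$. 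A short argument (using the growth estimate at $m = n$ together with analyticity and the telescoping identity above) shows that $(V^n h_n)$ is uniformly bounded; since $V^n h_n \in V^n \mathcal{H}$ and each $V^k \mathcal{H}$ is weakly closed, every weak cluster point of $(V^n h_n)$ lies in $\bigcap_{k \geq 0} V^k \mathcal{H} = \{0\}$ by analyticity, so $V^n h_n \rightharpoonup 0$.

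The decisive obstacle is upgrading this weak convergence to norm convergence, and this is where the divergence $\sum_{m \geq 2} 1/d_m = \infty$ becomes essential. The route I would pursue is a Ces\`aro-type averaging: dividing the growth estimate applied to $h_n$ by $d_m$, summing over $m = 2, \ldots, N$, and dividing by $\sum_{m=2}^N 1/d_m$, one uses the expansive lower bound $\|V^m h_n\|^2 \geq \|h_n\|^2$ against the diverging weights to conclude, in the limit $N \to \infty$, that $\alpha = \lim \|h_n\| = 0$. Reinserting this into the growth inequality at $m = n$ and controlling the term $d_n\bigl(\|V h_n\|^2 - \|h_n\|^2\bigr)$ by an analogous averaging then yields $\|V^n h_n\|^2 \to 0$. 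The hardest point is precisely this Ces\`aro balancing: the growth inequality must interact compatibly with the telescoping identities above, and the non-orthogonality of the series $\sum V^k e_k$ produces cross terms that must be controlled uniformly. I expect this to be managed using $\sum_n \|e_n\|^2 \leq \|h\|^2$ together with the uniform boundedness of $(V^n h_n)$ already established, so that the averaging absorbs the cross terms and isolates the defect $\|V h_n\|^2 - \|h_n\|^2$.
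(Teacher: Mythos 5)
Your reduction is the standard one and its first half is sound: with $L=(V^*V)^{-1}V^*$, the expansion $h=\sum_{k=0}^{n-1}V^ke_k+V^nh_n$, the monotonicity of $\|h_n\|$, $\sum_n\|e_n\|^2\le\|h\|^2$, and the telescoping identity showing that $\sum_n\bigl(\|Vh_n\|^2-\|h_n\|^2\bigr)\le\|h\|^2$ are all correct, and so is the remark that any weak cluster point of $(V^nh_n)$ lies in $\bigcap_k V^k\mathcal{H}=\{0\}$. The gap is in the decisive step, where you only sketch a ``Ces\`aro balancing'' and, as described, it does not work. You fix $n$ and average the growth inequality over $m$ with weights $1/d_m$, hoping to force $\alpha=\lim\|h_n\|=0$: dividing $\|V^mh_n\|^2\le d\|h_n\|^2+d_m(\|Vh_n\|^2-\|h_n\|^2)$ by $d_m$ and summing gives $(1-d)\|h_n\|^2\sum_{m\le N}1/d_m\le (N-1)(\|Vh_n\|^2-\|h_n\|^2)$, which is vacuous whenever $d\ge1$ (the typical case); moreover $\lim\|h_n\|=0$ is neither needed for the wandering subspace property nor a consequence of the hypotheses. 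Likewise, the uniform boundedness of the whole sequence $(V^nh_n)$ is asserted but not proved: the growth inequality at $m=n$ gives $\|V^nh_n\|^2\le d\|h_n\|^2+d_n(\|Vh_n\|^2-\|h_n\|^2)$, and although the defects $\|Vh_n\|^2-\|h_n\|^2$ are summable, the weights $d_n$ are unbounded, so there is no termwise control; only a subsequence can be bounded, and extracting it is exactly where $\sum 1/d_n=\infty$ must be used. Finally, upgrading to norm convergence of the full sequence $V^nh_n\to0$ is strictly stronger than what the theorem requires and is not achieved by the sketch.

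The correct use of the divergence hypothesis (this is how Olofsson argues, and how the present paper proves its generalization in Theorems \ref{Theorem 2.10} and \ref{TT}, with $L$ replaced by the Moore--Penrose inverse $\widetilde{V}^{\dagger}$) is to sum over the \emph{iterate index}: apply the growth condition with exponent $n$ to the vector $h_n=L^nh$, divide by $d_n$, and sum over $n$, so that
\begin{equation*}
\sum_{n\ge2}\frac{\|V^nh_n\|^2-d\|h_n\|^2}{d_n}\;\le\;\sum_{n\ge2}\bigl(\|Vh_n\|^2-\|h_n\|^2\bigr)\;\le\;\|h\|^2 .
\end{equation*}
Since $\sum_n 1/d_n=\infty$, this forces $\liminf_n\bigl(\|V^nh_n\|^2-d\|h_n\|^2\bigr)\le0$, hence a subsequence $(V^{n_j}h_{n_j})$ bounded by $d\|h\|^2+1$; a weak cluster point of this subsequence lies in $\bigcap_kV^k\mathcal{H}=\{0\}$ by analyticity, and since $h-V^{n_j}h_{n_j}=\sum_{k<n_j}V^ke_k$ belongs to the norm-closed (hence weakly closed) subspace $\bigvee_{k\ge0}V^k\mathcal{E}$, passing to the weak limit gives $h\in\bigvee_{k\ge0}V^k\mathcal{E}$. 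So your overall architecture matches the paper's route, but you need to replace the averaging over $m$ for fixed $n$, the claim $\lim\|h_n\|=0$, and the norm-convergence goal by this weighted summation over $n$ together with the weak-compactness/weak-closedness argument; only weak subsequential convergence of $V^nh_n$ is needed.
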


Ezzahraoui, Mbekhta, and Zerouali in \cite{EMZ15} using the reduced minimum modulus $\geq 1,$ and a general condition of the expansive operator, extended Theorem \ref{Oa}  for the broader class of regular operators \cite{M89}  and proved the following Wold-type decomposition: \begin{theorem}(Ezzahraoui-Mbekhta-Zerouali)
	If $V$ is a  regular bounded linear operator defined  on a Hilbert space $\mathcal{H}$ such that the reduced minimum modulus of $V$ is greater than or equal to 1, and  satisfies the  growth condition \begin{align*}
	\|V^{m}h\|^{2}\leq  \|V^{\dagger}Vh\|^{2} +d_{m}(\|Vh\|^{2}-\|V^{\dagger}Vh\|^{2}),\:\:\: h \in \mathcal{H}
	\end{align*} such that $\sum_{ m\geq 2}\frac{1}{d_{m}},$ then \begin{equation*}
	\mathcal{H}= \bigcap_{n=1}^{\infty}V^{n}(\mathcal{H})+[\mathcal{H}\ominus V(\mathcal{H})]_{V} .
	\end{equation*}
\end{theorem}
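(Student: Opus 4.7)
The strategy is to adapt the Shimorin--Olofsson approach to the regular setting, with the Moore--Penrose inverse $V^{\dagger}$ taking the place of the adjoint. I would first record the standard facts for a regular $V$ with $\gamma(V)\ge 1$: the range $V(\mathcal{H})$ is closed, $VV^{\dagger}=P_{V(\mathcal{H})}$ and $V^{\dagger}V=P_{(\ker V)^{\perp}}$, one has $\|Vg\|\ge \|g\|$ for every $g\in(\ker V)^{\perp}$, and each iterate $V^{n}$ is again regular with $\gamma(V^{n})\ge 1$, so that $V^{n}(\mathcal{H})$ is closed and $\bigcap_{n\ge 1}V^{n}(\mathcal{H})$ is a genuine closed subspace on which the argument can land.

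The core is a telescoping identity obtained by iterating the orthogonal decomposition $x=VV^{\dagger}x+(I-VV^{\dagger})x$ applied successively to $x=(V^{\dagger})^{k}h$. This gives, for every $h\in\mathcal{H}$ and every $n\ge 1$,
\[
h \;=\; V^{n}(V^{\dagger})^{n}h \;+\; \sum_{k=0}^{n-1} V^{k}w_{k}, \qquad w_{k}:=(I-VV^{\dagger})(V^{\dagger})^{k}h \in W,
\]
where $W=\mathcal{H}\ominus V(\mathcal{H})$. The first summand lies in $V^{n}(\mathcal{H})$ and the tail lies in $[W]_{V}$, so the theorem reduces to proving the norm convergence of the series $\sum_{k\ge 0}V^{k}w_{k}$; once that is established, the residual $V^{n}(V^{\dagger})^{n}h=h-\sum_{k=0}^{n-1}V^{k}w_{k}$ automatically converges to a limit that, being a limit of elements from the decreasing sequence of closed subspaces $V^{n}(\mathcal{H})$, lies in $\bigcap_{n\ge 1}V^{n}(\mathcal{H})$.

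To prove convergence of the series I would feed $g=(V^{\dagger})^{n}h$ into the growth hypothesis. Since $g\in(\ker V)^{\perp}$ one has $V^{\dagger}Vg=g$, and the inequality collapses to
\[
\|V^{m}(V^{\dagger})^{n}h\|^{2}\;\le\;\|(V^{\dagger})^{n}h\|^{2}\;+\;d_{m}\bigl(\|V(V^{\dagger})^{n}h\|^{2}-\|(V^{\dagger})^{n}h\|^{2}\bigr).
\]
The Pythagorean identity $\|(V^{\dagger})^{n-1}h\|^{2}=\|V(V^{\dagger})^{n}h\|^{2}+\|w_{n-1}\|^{2}$ then rewrites the error in terms of the wandering norms $\|w_{k}\|^{2}$. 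Taking $m=n$ (as in Olofsson's proof of Theorem~\ref{Oa}) and averaging the resulting family of inequalities with Cesàro--Abel weights $1/d_{m}$ over $m=2,\dots,N$ and letting $N\to\infty$, the divergence $\sum_{m\ge 2}1/d_{m}=\infty$ forces the non-negative telescoping differences to be summable, yielding $\sum_{k\ge 0}\|w_{k}\|^{2}<\infty$. Combined with $\gamma(V)\ge 1$, which keeps the layers $V^{k}W$ quasi-orthogonal, this gives strong convergence of $\sum V^{k}w_{k}$ in $[W]_{V}$, completing the decomposition.

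The main obstacle is the final Cesàro--Abel step: the growth condition only bounds $\|V^{m}g\|^{2}-\|g\|^{2}$ at linear rate $d_{m}$, and the proof must calibrate this against the divergent weights $1/d_{m}$ so as to genuinely extract \emph{summability} of $\|w_{k}\|^{2}$ rather than mere boundedness. This delicate balance is the technical heart of the argument, made subtler here than in Olofsson's expansive case by the fact that the projections $V^{\dagger}V$ and $VV^{\dagger}$ are not the identity, so that every estimate must carry along the orthogonal complements $\ker V$ and $W$ without obstructing the telescoping.
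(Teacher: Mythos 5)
Your setup — closed ranges of the iterates, the telescoping identity $h=V^{n}(V^{\dagger})^{n}h+\sum_{k=0}^{n-1}V^{k}w_{k}$ with $w_{k}=(I-VV^{\dagger})(V^{\dagger})^{k}h$, and substituting $(V^{\dagger})^{m}h$ into the growth condition — is exactly the route the paper takes for its generalization (Lemma \ref{Lemma 2.5}, Lemma \ref{Lemma 2.4}, Theorem \ref{Theorem 2.10}, of which the present statement is the scalar case). The gap is in your final step. First, $\sum_{k\ge 0}\|w_{k}\|^{2}\le\|h\|^{2}$ is automatic: since $\gamma(V)\ge 1$ makes $V^{\dagger}$ a contraction, iterating $\|x\|^{2}=\|VV^{\dagger}x\|^{2}+\|P_{W}x\|^{2}\ge\|V^{\dagger}x\|^{2}+\|P_{W}x\|^{2}$ at $x=(V^{\dagger})^{k}h$ already gives it (this is Lemma \ref{Lemma 2.4}); no Ces\`aro averaging against $1/d_{m}$ is needed for that, so your "technical heart" is aimed at the wrong quantity. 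Second, and more seriously, square-summability of the $w_{k}$ together with $\gamma(V)\ge 1$ does \emph{not} yield norm convergence of $\sum_{k}V^{k}w_{k}$: the condition $\gamma(V)\ge 1$ only gives the lower bound $\|V\xi\|\ge\|\xi\|$ on $(\ker V)^{\perp}$; it gives no upper bound on $\|V^{k}\|$ (the growth hypothesis itself permits $\|V^{k}w\|^{2}$ of order $d_{k}\|w\|^{2}$ with $d_{k}\to\infty$) and no orthogonality, or "quasi-orthogonality", between the layers $V^{k}W$. So the claimed strong convergence of the series, and with it the norm convergence of the residuals $V^{n}(V^{\dagger})^{n}h$, is unjustified.

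What the growth condition actually buys, after dividing by $d_{m}$, summing, and using $\sum_{m\ge 2}1/d_{m}=\infty$ together with the bound from Lemma \ref{Lemma 2.4}, is $\liminf_{m}\bigl(\|V^{m}(V^{\dagger})^{m}h\|^{2}-\|(V^{\dagger})^{m}h\|^{2}\bigr)\le 0$, hence boundedness of the residuals along a subsequence. The paper then extracts a \emph{weakly} convergent subsequence $V^{m_{j}}(V^{\dagger})^{m_{j}}h\rightharpoonup y$, verifies $V^{m}(V^{\dagger})^{m}y=y$ for every $m$ so that $y\in\bigcap_{n}V^{n}(\mathcal{H})$, and uses that $[\mathcal{H}\ominus V\mathcal{H}]_{V}$ is norm-closed, hence weakly closed, together with the telescoping identity, to conclude $h-y\in[\mathcal{H}\ominus V\mathcal{H}]_{V}$. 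Replacing your norm-convergence claim by this weak-compactness argument closes the gap; note the conclusion is only an algebraic sum decomposition, and no norm-convergent wandering expansion is needed (nor, in general, available).
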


The study of Wold decomposition has begun in Non-commutative Multivariable Operator Theory  by Frazho \cite{F82} and by Popescu \cite{Po89}. They explored Wold-decomposition for row-isometries considered by Cuntz \cite{C77}. Pimsner \cite{P97} introduced the notion of Cuntz-Pimsner algebra for faithful $C^*$-correspondence. Muhly and Solel \cite{MS99} provided the Wold-type decomposition for representations of tensor algebras of $C^*$-correspondences, and they explored the invariant subspaces of particular subalgebras of Cuntz-Krieger algebras.

Let us discuss the structure of this article: Section \ref{section 3} begins with the introduction of generalized range, the algebraic core, and regular covariant representation. Here, we explain the connection between generalized range and algebraic core. In Section \ref{section 4}, we analyze the regularity condition of the generalized inverse of a covariant representation, discuss its properties, and characterize the element of the generalized range. In Section \ref{section 5},  we study the reduced minimum modulus, Moore-Penrose inverse for the covariant representation $(\sigma, V)$ and its various properties. Assume $(\sigma,V)$ to be regular, completely bounded, covariant representation of a $C^*$-correspondence $E$ on a Hilbert space $\mathcal{H}$  with the reduced minimum modulus $\gamma(\widetilde{V}) \geq 1$ and satisfying the growth condition \begin{equation*}\label{concave}
\|\widetilde{V}_k (\xi_k )\|^2 \leq d_k (\| (I_{E^{\ot {k-1}}} \ot \widetilde{V}) (\xi_k)\|^2 -\|(I_{E^{\ot {k-1}}} \ot \widetilde{V}^\dagger \widetilde{V}) (\xi_k)\|^2) + \|(I_{E^{\ot {k-1}}} \ot \widetilde{V}^\dagger \widetilde{V}) (\xi_k)\|^2
\end{equation*} for every $ k\geq 0 $;\quad  $ \xi_k \in E^{\ot k} \ot \mathcal{H}$ with $\sum_{k \geq 2}$ $\frac{1}{d_k} = \infty$. Then $(\sigma,V)$ admits a Shimorin-Wold-type decomposition. It is the main content of \ref{section 6}.  In Section \ref{section 7}, we extend the commutant result of Bercovici, Douglas, and  Foias \cite{BRC} for a shift to left invertible completely bounded representation based on recent work of S. Sarkar\cite{SS21}. In Section \ref{7}, we derived a sufficient condition on weight sequence $(Z_{k} )_{k \in \mathbb{N}_{0}}$  such that weighted shift $(\rho, S)$ on a $F(E)\ot \mathcal{H}$ with weight sequence $(Z_{k})$ given by Muhly and Solel in \cite{MS16} admits the generating wandering subspace property. We also study the multivariable analog of the bilateral shift discussed in \cite{EMZ15}. 
\subsection{ Preliminaries and Notations}
Now we recall some well-known definitions and properties
of Hilbert $C^*$-modules, $C^*$-correspondences (cf. \cite{MR0355613, La95}) and  covariant representations of $C^*$-correspondences (cf. \cite{P97, MR1648483}).

Let $\mathcal{B}$ be a $C^*$-algebra and  $E$ be a Hilbert $\mathcal{B}$-module. Assume $\mathcal L(E)$ to be $C^*$-algebra of all adjointable maps on $E.$ Now $E$  is said to be a {\em $C^*$-correspondence over $\mathcal B$} if $E$ is left $\mathcal
B$ a module where the left action is through a non-zero $*$-homomorphism
$\phi:\mathcal B\to \mathcal L(E)$ such that
\[a\xi:=\phi(a)\xi \quad \mbox{for all} \quad (a\in\mathcal B, \xi\in E).\] Here, we assume that every $*$-homomorphism is always
essential, which means that the closure of the linear span of
$\phi(\mathcal B)E$ is $E.$ The module $E$ has an operator
space structure inherited as a subspace of the so called linking algebra (see \cite{MR1648483}). Suppose $F$ and $E$ are two
$C^*$-correspondences over $\mathcal B$. Then the
{\em tensor product}, denoted by $F\otimes_{\phi} E$, satisfies the following properties
\[
(\xi_1 a)\otimes \zeta_1=\xi_1\otimes \phi(a)\zeta_1,
\]
\[
\langle\xi_1\otimes\zeta_1,\xi_2\otimes\zeta_2\rangle=\langle\zeta_1,\phi(\langle\xi_1,\xi_2\rangle)\zeta_2\rangle
\]
for every $\xi_1,\xi_2\in F,$ $\zeta_1,\zeta_2\in E$  and $a\in\mathcal{B}.$

Unless necessary, after this section, we simply write $F\otimes E$    instead of  $F\otimes_{\phi} E.$  Throughout this paper, we assume that $\mathcal{B}$ is a $C^*$-algebra, $\mathcal{H}$ is a Hilbert
space and  $E$ is a $C^*$-correspondence over $\mathcal B$ with left module action given by a $*$-homomorphism $\phi:\mathcal{B} \rightarrow \mathcal{L}(E).$
\begin{definition}
	Assume
	$\sigma:\mathcal B\to B(\mathcal H)$ to be a representation and $V:
	E\to B(\mathcal H)$ to be a linear function.  If
	\[
	V( c\eta d)=\sigma(c)V(\eta)\sigma(d), \quad \mbox{for all}\quad
	c,d\in\mathcal B,\eta\in E
	\] then we say that the pair $(\sigma,V)$ is
	a {\em covariant representation} (cf. \cite{MR1648483}) of $E$ on the Hilbert space $\mathcal{H}.$ The covariant representation $(\sigma, V)$ is {\em completely bounded}  (CB-representation)  if $V$ is { \em completely bounded}. Additionally, if $V(\xi)^*V(\zeta)=\sigma(\langle \xi, \zeta \rangle), \, \zeta, \xi \in E$  then  we say $(\sigma,V)$ is {\em isometric.} 
\end{definition}

Consider a CB-representation  $(\sigma, V)$ of $E$ on $\mathcal{H},$  then there corresponds  a bounded operator   $\widetilde{V}: E\otimes \mathcal{H} \to \mathcal{H}$ defined by $\widetilde{V}(\xi \otimes h):=V(\xi)h$ where $ \xi \in E$ and $ h\in\mathcal{H}.$
Note that $\wt{V}$ satisfies $\wt{V}(\phi(a) \ot I_{\mathcal{H}})=\sigma(a)\wt{V}, a \in \mathcal{B}$ and $\phi$ is a left action  on $E.$

To categorise the covariant representation of $C^*$-correspondences, the following lemma from {\rm \cite[Lemma 3.5]{MR1648483}} is useful. \begin{lemma}\label{MSL} 
	There is a map $(\sigma, V)\mapsto \widetilde{V} $ which gives a  one-to-one correspondence between the collection of all bounded  linear maps
	$\widetilde{V}:~\mbox{$E\otimes_{\sigma} \mathcal{H}\to \mathcal{H}
		$}$  which satisfies $\widetilde{V}(\phi(a)\otimes I_{\mathcal
		H})=\sigma(a)\widetilde{V},$ $a\in\mathcal B$ and the collection of all CB-representations $(\sigma, V)$ of $E$ on $\mathcal H.$ Additionally, $(\sigma, V)$ is isometric if and only $\widetilde{V}$ is an isometry. A CB-representation $(\sigma, V)$ is {\em fully co-isometric} if $\widetilde{V}$ is co-isometry, that is, $\widetilde{V}\widetilde{V}^*=I_{\mathcal{H}}.$
\end{lemma}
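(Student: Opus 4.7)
The plan is to construct the correspondence in both directions, check the two constructions are mutual inverses, and then derive the isometric refinement by a direct inner-product computation. Given a CB-representation $(\sigma,V)$, I would define $\wt{V}$ on simple tensors by $\wt{V}(\xi\otimes h):=V(\xi)h$ and extend by linearity. The first point is that this descends to the balanced module tensor product $E\otimes_\sigma\Hil$: the right-covariance $V(\eta d)=V(\eta)\sigma(d)$ gives $\wt{V}(\xi a\otimes h)=V(\xi a)h=V(\xi)\sigma(a)h=\wt{V}(\xi\otimes\sigma(a)h)$, which is precisely the balancing relation. The intertwining $\wt{V}(\phi(a)\otimes I_\Hil)=\sigma(a)\wt{V}$ then translates, on simple tensors, to the left-covariance $V(a\xi)=\sigma(a)V(\xi)$.

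For the reverse direction, given $\wt{V}$ bounded with the intertwining, I would set $V(\xi)h:=\wt{V}(\xi\otimes h)$. Each $V(\xi)$ is bounded on $\Hil$ since $\|V(\xi)h\|\leq\|\wt{V}\|\|\xi\|\|h\|$, and the bimodule covariance $V(a\xi b)=\sigma(a)V(\xi)\sigma(b)$ follows by combining the intertwining for the left action with the balancing $\xi b\otimes h=\xi\otimes\sigma(b)h$ for the right. On simple tensors the two assignments $V\mapsto\wt{V}$ and $\wt{V}\mapsto V$ are visibly mutual inverses, and density of simple tensors closes the bijection.

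The real obstacle, and the only place where the CB hypothesis is truly used rather than mere boundedness of each $V(\xi)$, is the equivalence $\|V\|_{cb}=\|\wt{V}\|$ that underlies well-definedness and boundedness of $\wt{V}$. The plan here is to identify the operator-space norm on $M_n(E)$ inherited from the linking algebra with the norm of $[V(\xi_{ij})]$ acting $\Hil^n\to\Hil^n$, and then transport this across the natural unitary $E\otimes_\sigma\Hil^n\cong(E\otimes_\sigma\Hil)^n$. I expect this bookkeeping with operator-space matrix norms to be the delicate part of the argument.

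Finally, for the isometric refinement, I would compute on one hand
\[
\bigl\|\wt{V}\bigl(\textstyle\sum_{i}\xi_i\otimes h_i\bigr)\bigr\|^{2}=\sum_{i,j}\langle h_i,V(\xi_i)^{*}V(\xi_j)h_j\rangle,
\]
and on the other the tensor-product norm
\[
\bigl\|\textstyle\sum_{i}\xi_i\otimes h_i\bigr\|^{2}=\sum_{i,j}\langle h_i,\sigma(\langle\xi_i,\xi_j\rangle)h_j\rangle.
\]
Equality of these two sesquilinear forms for all choices of $\xi_i,h_i$ is, by polarization, equivalent to $V(\xi)^{*}V(\zeta)=\sigma(\langle\xi,\zeta\rangle)$ for every $\xi,\zeta\in E$, which is exactly the definition of isometric for $(\sigma,V)$. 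The closing sentence of the lemma merely names the condition $\wt{V}\wt{V}^{*}=I_\Hil$ as fully co-isometric, so there is nothing further to prove.
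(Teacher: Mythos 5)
The paper itself offers no proof of this lemma --- it is imported verbatim from Muhly--Solel \cite[Lemma 3.5]{MR1648483} --- so your proposal can only be measured against that standard argument, whose overall route you do follow. Your set-up of the two assignments $V\mapsto\wt{V}$, $\wt{V}\mapsto V$, and your treatment of the isometric refinement are fine: comparing the two sesquilinear forms $\sum_{i,j}\langle h_i,V(\xi_i)^*V(\xi_j)h_j\rangle$ and $\sum_{i,j}\langle h_i,\sigma(\langle\xi_i,\xi_j\rangle)h_j\rangle$ and polarizing does show that $\wt{V}$ is an isometry exactly when $V(\xi)^*V(\zeta)=\sigma(\langle\xi,\zeta\rangle)$, and the last sentence of the lemma is indeed only a definition.

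The genuine gap is that the analytic heart of the lemma is announced but never executed. Well-definedness of $\wt{V}$ is not just compatibility with the balancing relation $\xi a\otimes h=\xi\otimes\sigma(a)h$: the space $E\otimes_\sigma\Hil$ is the Hausdorff completion for the semi-inner product $\langle\xi\otimes h,\zeta\otimes k\rangle=\langle h,\sigma(\langle\xi,\zeta\rangle)k\rangle$, so you must prove the estimate $\|\sum_i V(\xi_i)h_i\|\le C\,\|\sum_i\xi_i\otimes h_i\|$; and in the reverse direction you must show that boundedness of $\wt{V}$ forces $V$ to be \emph{completely} bounded, since otherwise the inverse assignment is not known to land in the class of CB-representations. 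Both points are precisely the identity $\|\wt{V}\|=\|V\|_{cb}$ that you defer as ``bookkeeping,'' and it is not mere bookkeeping: the naive estimate $\|[V(\xi_1)\cdots V(\xi_n)]\|\le\|V\|_{cb}\,\|[\langle\xi_i,\xi_j\rangle]\|^{1/2}$ only gives $\|\wt{V}(x)\|\le\|V\|_{cb}\,\|B\|^{1/2}\|h\|$ with $B=[\langle\xi_i,\xi_j\rangle]$ and $h=(h_i)$, not $\|V\|_{cb}\,\|x\|$. The standard way to close this is a factorization trick: for $\epsilon>0$ write the row $[\xi_1\cdots\xi_n]=[\eta_1\cdots\eta_n]\,(B+\epsilon)^{1/2}$ with $[\eta_1\cdots\eta_n]=[\xi_1\cdots\xi_n](B+\epsilon)^{-1/2}$ of row norm at most one, use the covariance $V(\sum_j\eta_j b_j)=\sum_j V(\eta_j)\sigma(b_j)$ to get $\wt{V}(x)=[V(\eta_1)\cdots V(\eta_n)]\,\sigma_n\big((B+\epsilon)^{1/2}\big)h$, and let $\epsilon\to0$; the converse inequality $\|V\|_{cb}\le\|\wt{V}\|$ follows by applying $\wt{V}$ rowwise to $\sum_j\xi_{ij}\otimes h_j$ and using $[\xi_{ij}]^*[\xi_{ij}]=[\sum_k\langle\xi_{ki},\xi_{kj}\rangle]$ together with contractivity of $\sigma_n$. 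Until this is written out, your proposal establishes neither that $(\sigma,V)\mapsto\wt{V}$ is defined on every CB-representation nor that its inverse produces CB-representations, i.e.\ the claimed bijection is not yet proved.
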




For each $n \in \mathbb{N},$ define $E^{\otimes n}:=E \otimes \cdots \otimes E=\displaystyle\bigotimes_{i=1}^{n}E$ ($n$ fold tensor product) and $E^{\otimes 0}:=\mathcal{B}.$ Then $E^{\otimes n}$ is  a $C^*$-correspondence over $\mathcal{B}$ in a natural way,  where the left action  of $\mathcal{B}$ on $E^{\otimes n}$   is denoted by $\phi^n$  and, is given by $\phi^n(a)(\bigotimes_{i=1}^{n}\xi_i)=\phi(a)\xi_{1}\ot\bigotimes_{i=2}^{n}\xi_i, \: \xi_i \in E, 1 \leq i \leq n.$

For each $n \in \mathbb{N},$ define  $\widetilde{V}_n:~\mbox{$E^{\otimes n}\otimes \mathcal H\to \mathcal
	H$}$  by  \[
\widetilde{V}_n(\bigotimes_{i=1}^{n}\xi_i \otimes h)=V(\xi_1) V( \xi_2) \cdots V(\xi_n)h,
\]
for $\xi_i\in E,
h\in\mathcal H, 1 \leq i \leq n.$ It is trivial to see that for each $ n\in \mathbb{N},$
$$\widetilde{V}_n=\widetilde{V}(I_E \otimes \widetilde{V}_{n-1} )= \wV_{n-1}(I_{E^{\otimes n-1}} \otimes \widetilde{V}).
$$

The following theorem from \cite[Theorem 3.13]{HV19} is an abstraction of Shimorin-Wold-type decomposition \cite[Theorem 3.6]{S01} and of Wold decomposition for isometric representation due to Muhly and Solel \cite{MS99}.

\begin{theorem}\label{MT1}
	Consider a CB-representation $(\sigma, V)$  of $E$ on  $\mathcal{H}.$ If it satisfies one of the following conditions:
	\begin{enumerate}
		
		\item[(1)]  for every  $\zeta \in E^{\otimes 2} \otimes \mathcal{H},\kappa \in E \otimes \mathcal{H} $
		\begin{align*}
		\|(I_{E} \otimes \wt{V})(\zeta)+\kappa\|^2 \leq 2(\|\zeta\|^2+\|\wt{V}(\kappa)\|^2);
		\end{align*}
		\item[(2)]  $(\sigma, V)$ is concave, i.e., \\ $\| \wt{V}_2(\eta \otimes h)\|^2+\|\eta \otimes h\|^2 \leq 2 \|(I_E \otimes  \wt{V})(\eta \otimes h)\|^2, \eta \in E^{\otimes 2}, h \in \mathcal{H}.$
	\end{enumerate}
	Then $(\sigma, V)$ admits Wold-type decomposition.  To put it another way, there is a wandering subspace $\mathcal{W}$ for $(\sigma, V)$ that divides $\mathcal{H}$ into the direct sum of $(\sigma, V)$-reducing subspaces.
	\begin{align}
	\mathcal{H}=\displaystyle\bigvee_{n \in \mathbb{N}_0}\wt{V}_n(E^{\ot n} \ot \mathcal{W}) \bigoplus \bigcap_{n \in \mathbb{N}_{0}}\wt{V}_n(E^{\otimes n} \otimes \mathcal{H})
	\end{align}
	that both an isometric and a fully co-isometric representation of the restriction of $(\sigma,V)$ on $\bigcap_{n \in \mathbb{N}}\wt{V}_n(E^{\otimes n} \otimes \mathcal{H})$ is achieved. Moreover, the above decomposition is unique.
\end{theorem}
\section{Regular, completely bounded, covariant representations}\label{section 3} This section is based on \cite[Chapter 1]{A07}. Also, we introduce regular covariant representations of ${E}$ on $\mathcal{H}$ and prove that its generalized range (hyper-range) is equal to the notion of the algebraic core (cf. \cite[Chapter 1]{A07}).

Suppose $(\sigma,V)$ is a CB-representation of  $E$ on $\mathcal{H}.$  Define $ R^\infty{({V})}:= \bigcap_{n \in \mathbb{N}_{0}}R(\widetilde{V}_{n} )$ will be used to denote by  the generalized range  of  $(\sigma,V),$ where $\mathbb{N}_{0}:=\mathbb{N}\cup\{0\}.$ Since $(R(\widetilde{V}_{n}))_{n\in \mathbb{N}_{0}}$ is a decreasing sequence,  $ R^\infty{({V})}= \bigcap_{n \in \mathbb{N}_{0}}R(\widetilde{V}_{n})=\bigcap_{n \in \mathbb{N}}R(\widetilde{V}_{n} ).$ 

\begin{lemma} \label{LL}
	Consider a CB-representation $(\sigma,V)$  of ${E}$ on $\mathcal{H}.$ Then  \begin{equation*}
	(I_{E^{\otimes n}} \otimes \widetilde{V}_m) N(\widetilde{V}_{m+n}) = N(\widetilde{V}_{n})  \cap (I_{E^{\otimes n}} \otimes \widetilde{V}_m) (E^{\otimes (m+n)} \otimes \mathcal{H}), \quad \quad m,n \in \mathbb{N}.
	\end{equation*}
\end{lemma}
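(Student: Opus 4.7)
The plan is to reduce the identity to a routine double inclusion based on a single factorization identity for the iterated maps $\widetilde{V}_k$. Specifically, the defining recursion $\widetilde{V}_k=\widetilde{V}_{k-1}(I_{E^{\otimes (k-1)}}\otimes\widetilde{V})$, iterated, gives the factorization
\begin{equation*}
\widetilde{V}_{m+n}=\widetilde{V}_{n}\bigl(I_{E^{\otimes n}}\otimes\widetilde{V}_{m}\bigr),
\end{equation*}
under the canonical identification $E^{\otimes(m+n)}\otimes\mathcal{H}\cong E^{\otimes n}\otimes(E^{\otimes m}\otimes\mathcal{H})$. This identity will be the only real ingredient; the rest is bookkeeping with ranges and kernels.

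For the inclusion $\subseteq$, I would take $\eta\in N(\widetilde{V}_{m+n})$ and set $\xi:=(I_{E^{\otimes n}}\otimes\widetilde{V}_{m})\eta$, which automatically lies in the range on the right-hand side. Applying $\widetilde{V}_{n}$ and using the factorization gives $\widetilde{V}_{n}\xi=\widetilde{V}_{m+n}\eta=0$, so $\xi\in N(\widetilde{V}_{n})$, establishing this direction.

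For the reverse inclusion $\supseteq$, I would pick $\xi\in N(\widetilde{V}_{n})\cap(I_{E^{\otimes n}}\otimes\widetilde{V}_{m})(E^{\otimes(m+n)}\otimes\mathcal{H})$ and write $\xi=(I_{E^{\otimes n}}\otimes\widetilde{V}_{m})\eta$ for some $\eta\in E^{\otimes(m+n)}\otimes\mathcal{H}$. Then the factorization yields
\begin{equation*}
\widetilde{V}_{m+n}\eta=\widetilde{V}_{n}\bigl(I_{E^{\otimes n}}\otimes\widetilde{V}_{m}\bigr)\eta=\widetilde{V}_{n}\xi=0,
\end{equation*}
so $\eta\in N(\widetilde{V}_{m+n})$ and hence $\xi\in(I_{E^{\otimes n}}\otimes\widetilde{V}_{m})N(\widetilde{V}_{m+n})$, completing the proof.

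There is no real obstacle here; the only subtlety is to set up the identification $E^{\otimes(m+n)}\cong E^{\otimes n}\otimes E^{\otimes m}$ cleanly so that the operator $I_{E^{\otimes n}}\otimes\widetilde{V}_{m}$ makes sense as a map $E^{\otimes(m+n)}\otimes\mathcal{H}\to E^{\otimes n}\otimes\mathcal{H}$ and the factorization of $\widetilde{V}_{m+n}$ becomes literal rather than up to isomorphism. Once this identification is fixed, both inclusions follow directly from the factorization, and no assumption beyond $(\sigma,V)$ being a CB-representation is needed.
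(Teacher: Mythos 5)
Your proposal is correct and follows essentially the same route as the paper: both directions rest on the factorization $\widetilde{V}_{m+n}=\widetilde{V}_{n}(I_{E^{\otimes n}}\otimes\widetilde{V}_{m})$ (obtained by iterating $\widetilde{V}_k=\widetilde{V}_{k-1}(I_{E^{\otimes(k-1)}}\otimes\widetilde{V})$), followed by the same two-inclusion bookkeeping with kernels and ranges that the paper carries out. No gap; your extra remark about fixing the identification $E^{\otimes(m+n)}\cong E^{\otimes n}\otimes E^{\otimes m}$ is the same tacit convention the paper uses.
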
	
\begin{proof}
	Let $ \xi \in N(\widetilde{V}_{m+n}) \subseteq E^{\otimes (m+n)} \otimes \mathcal{H},$ then $ \widetilde{V}_{n}(I_{E^{\otimes n}} \otimes \widetilde{V}_m)(\xi) = \widetilde{V}_{m+n}(\xi)=0 $, so that $(I_{E^{\otimes n}} \otimes \widetilde{V}_m) N(\widetilde{V}_{m+n}) \subseteq N(\widetilde{V}_{n})  \cap (I_{E^{\otimes n}} \otimes \widetilde{V}_m) (E^{\otimes (m+n)} \otimes \mathcal{H})$.
	
	On another hand, if $ \eta \in N(\widetilde{V}_{n})  \cap (I_{E^{\otimes n}} \otimes \widetilde{V}_m) (E^{\otimes (m+n)} \otimes \mathcal{H})$ then $\widetilde{V}_{n}(\eta)= 0 $ and $ \eta = (I_{E^{\otimes n}} \otimes \widetilde{V}_m)(\xi)$ for some $\xi \in E^{\otimes (m+n)} \otimes \mathcal{H}$, and we get  $\widetilde{V}_{m+n}(\xi)=\widetilde{V}_{n}(I_{E^{\otimes n}} \otimes \widetilde{V}_m)(\xi) = 0 .$ Therefore $  \xi \in N(\widetilde{V}_{m+n}),$ we obtain $\eta \in(I_{E^{\otimes n}} \otimes \widetilde{V}_m) N(\widetilde{V}_{m+n}),$ so the opposite inclusion is verified.\qedhere
	
\end{proof}
The  following result defines the notion of regular, which is generalizing \cite[Theorem 1.5]{A07}.
\begin{theorem}\label{cc}
	Consider a CB-representation $(\sigma,V)$  of  $E$ on $\mathcal{H}.$ Then  the following conditions are equivalent:
	\begin{enumerate}
		\item $ N(\widetilde{V}) \subseteq (I_{E} \otimes \widetilde{V}_n)(E^{\otimes(n+1) }\otimes \mathcal{H})$  for every  $ n \in \mathbb{N} $ ;
		\item $ N({\widetilde{V}}_n) \subseteq (I_{E^{\otimes n}} \otimes \widetilde{V})(E^{\otimes(n+1) }\otimes \mathcal{H})$  for every  $n \in \mathbb{N} $ ;
		\item $ N({\widetilde{V}}_n) \subseteq(I_{E^{\otimes n}} \otimes \widetilde{V}_m)(E^{\otimes(n+m) }\otimes \mathcal{H})$   for every  $n, m \in \mathbb{N} $ ;
		\item $N(\widetilde{V}_{n}) = (I_{E^{\otimes n}} \otimes \widetilde{V}_m)N (\widetilde{V}_{m+n})$  for every  $n, m \in \mathbb{N} .$
	\end{enumerate}
	
\end{theorem}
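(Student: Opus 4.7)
For the easy implications, $(4) \Rightarrow (3)$ is trivial, $(3) \Rightarrow (2)$ follows by setting $m = 1$, and $(3) \Rightarrow (1)$ by setting $n = 1$. The equivalence $(3) \Leftrightarrow (4)$ is a direct consequence of Lemma~\ref{LL}: the factorization $\widetilde{V}_{m+n} = \widetilde{V}_n(I_{E^{\otimes n}} \otimes \widetilde{V}_m)$ gives the unconditional inclusion $(I_{E^{\otimes n}} \otimes \widetilde{V}_m) N(\widetilde{V}_{m+n}) \subseteq N(\widetilde{V}_n)$, while the lemma identifies the left side with $N(\widetilde{V}_n) \cap (I_{E^{\otimes n}} \otimes \widetilde{V}_m)(E^{\otimes (m+n)} \otimes \mathcal{H})$; condition (3) forces this intersection to coincide with $N(\widetilde{V}_n)$, yielding (4).

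To close the cycle I would prove $(2) \Rightarrow (3)$ and $(1) \Rightarrow (2)$. The first I handle by fixing $n$ and inducting on $m$. The base $m = 1$ is exactly (2). In the inductive step, take $\xi \in N(\widetilde{V}_n)$; the inductive hypothesis supplies $\eta \in E^{\otimes (n+m)} \otimes \mathcal{H}$ with $\xi = (I_{E^{\otimes n}} \otimes \widetilde{V}_m)(\eta)$, and the identity $\widetilde{V}_{m+n}(\eta) = \widetilde{V}_n(\xi) = 0$ places $\eta$ in $N(\widetilde{V}_{m+n})$. Applying (2) at the index $m + n$ gives $\eta = (I_{E^{\otimes (m+n)}} \otimes \widetilde{V})(\zeta)$; combining with the functoriality identity $(I_{E^{\otimes n}} \otimes \widetilde{V}_m)(I_{E^{\otimes (m+n)}} \otimes \widetilde{V}) = I_{E^{\otimes n}} \otimes \widetilde{V}_{m+1}$ delivers $\xi = (I_{E^{\otimes n}} \otimes \widetilde{V}_{m+1})(\zeta)$.

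The substantive step is $(1) \Rightarrow (2)$, which I would establish by proving the stronger conclusion (4) directly, by induction on $n$. The base $n = 1$ combines (1) with Lemma~\ref{LL} exactly as in the argument for $(3) \Leftrightarrow (4)$. For the inductive step, let $\xi \in N(\widetilde{V}_{n+1})$. Since $\widetilde{V}_{n+1} = \widetilde{V}_n(I_{E^{\otimes n}} \otimes \widetilde{V})$, we have $(I_{E^{\otimes n}} \otimes \widetilde{V})\xi \in N(\widetilde{V}_n)$; applying the inductive hypothesis at the index $m + 1$ furnishes $\omega \in N(\widetilde{V}_{m+n+1})$ with $(I_{E^{\otimes n}} \otimes \widetilde{V})\xi = (I_{E^{\otimes n}} \otimes \widetilde{V}_{m+1})(\omega) = (I_{E^{\otimes n}} \otimes \widetilde{V})(I_{E^{\otimes (n+1)}} \otimes \widetilde{V}_m)(\omega)$. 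Hence the residue $\xi' := \xi - (I_{E^{\otimes (n+1)}} \otimes \widetilde{V}_m)(\omega)$ lies in $N(I_{E^{\otimes n}} \otimes \widetilde{V})$.

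The main obstacle is absorbing this residue. I plan to invoke the Hilbert $C^*$-module identification $N(I_{E^{\otimes n}} \otimes \widetilde{V}) = E^{\otimes n} \otimes N(\widetilde{V})$, which is available because the covariance $\widetilde{V}(\phi(a) \otimes I_{\mathcal{H}}) = \sigma(a)\widetilde{V}$ renders $N(\widetilde{V})$ a $\phi(\mathcal{B})$-invariant closed subspace of $E \otimes \mathcal{H}$. The base $n = 1$ case of the induction, which for each $\beta \in N(\widetilde{V})$ produces $\gamma \in N(\widetilde{V}_{m+1})$ with $\beta = (I_E \otimes \widetilde{V}_m)(\gamma)$, can then be applied factor-wise in the $N(\widetilde{V})$-component of $\xi'$ to rewrite $\xi' = (I_{E^{\otimes (n+1)}} \otimes \widetilde{V}_m)(\omega'')$ with $\omega'' \in N(\widetilde{V}_{m+n+1})$ (the kernel membership following from $\widetilde{V}_{m+n+1}(\omega'') = \widetilde{V}_n(I_{E^{\otimes n}} \otimes \widetilde{V}_{m+1})(\omega'') = 0$ since $\widetilde{V}_{m+1}(\gamma) = 0$). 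Thus $\xi = (I_{E^{\otimes (n+1)}} \otimes \widetilde{V}_m)(\omega + \omega'')$ with $\omega + \omega'' \in N(\widetilde{V}_{m+n+1})$, which is (4) at $n + 1$ and closes the induction.
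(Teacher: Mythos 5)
Your argument is correct and rests on the same toolkit as the paper --- Lemma~\ref{LL}, induction, and the identification of the kernel of an ampliated map with the ampliation of the kernel --- but it is organized differently. The paper runs the cycle $(1)\Rightarrow(2)\Rightarrow(3)\Rightarrow(4)\Rightarrow(1)$, the substantive induction being $(1)\Rightarrow(2)$ on $n$: there the residue $\xi-(I_{E^{\otimes(k+1)}}\otimes\widetilde{V})\eta$ lands in $N(I_E\otimes\widetilde{V}_k)=E\otimes N(\widetilde{V}_k)$ and is absorbed by the induction hypothesis at level $k$, and only afterwards does Lemma~\ref{LL} upgrade (3) to the kernel identity (4). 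You instead prove the strongest statement $(1)\Rightarrow(4)$ directly by induction on $n$, placing the residue in $N(I_{E^{\otimes n}}\otimes\widetilde{V})=E^{\otimes n}\otimes N(\widetilde{V})$ and absorbing it with the ampliated base case $N(\widetilde{V})=(I_E\otimes\widetilde{V}_m)N(\widetilde{V}_{m+1})$; your $(2)\Rightarrow(3)$ induction is a mild rearrangement of the paper's (you apply (2) to the element produced by the induction hypothesis rather than the other way around). What your route buys is that the hard induction yields the exact equality (4) at once, so (2) and (3) fall out as trivial consequences; what it costs is that the ``factor-wise'' absorption deserves one more sentence: a general element of $E^{\otimes n}\otimes N(\widetilde{V})$ is not an elementary tensor, so you must know that the ampliation $I_{E^{\otimes n}}\otimes\bigl((I_E\otimes\widetilde{V}_m)\vert_{N(\widetilde{V}_{m+1})}\bigr)$ is still surjective onto $E^{\otimes n}\otimes N(\widetilde{V})$, which holds because the restricted map is a bounded surjection intertwining the $\mathcal{B}$-actions (so $TT^*$ is invertible and remains invertible after tensoring). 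This is the same level of informality as the paper's own unproved uses of $N(I_E\otimes\widetilde{V}_k)=E\otimes N(\widetilde{V}_k)$ and of range inclusions after tensoring, so it is not a gap relative to the paper's standard, but it is the one step you state loosely.
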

\begin{proof}
	$(1) \implies (2):$ We will prove it by Mathematical induction.
	For $n=1$ in (1), we have \begin{equation*}
	N(\widetilde{V}) \subseteq (I_{E} \otimes \widetilde{V})(E^{\otimes2 }\otimes \mathcal{H}),
	\end{equation*} which means (2) is true for $ n=1.$
	Let us assume (2) is true for $ n=k$. To show that it is true for $ n=k+1$, i.e., \begin{equation*}
	N({\widetilde{V}}_{k+1}) \subseteq (I_{E^{\otimes {k+1}}} \otimes \widetilde{V})(E^{\otimes(k+2) }\otimes \mathcal{H}).
	\end{equation*}
	For this purpose, consider $\xi \in N{(\wV_{k+1})}$ then
	\begin{align*}
	(I_{E}\ot \wV_{k})(\xi)\in N{(\wV)}\subseteq (I_{E} \otimes \widetilde{V}_{k+1})(E^{\otimes(k+2) }\otimes \mathcal{H}),
	\end{align*} here last inequality follows from the hypothesis  (1). By Lemma \ref{LL},  there  exists $ \eta \in N{(\wV_{k+2})}$ such that $ (I_{E} \ot \wV_{k})(\xi)=(I_{E}\ot\wV_{k+1} )(\eta)$ which implies that
	$\xi-(I_{E^{\ot k+1}}\ot \wV)\eta\in N(I_{E} \ot \wV_{k})=E\ot N(\wV_{k})$. Now using the induction hypothesis, i.e., $ N({\widetilde{V}}_{k}) \subseteq (I_{E^{\otimes {k}}} \otimes \widetilde{V})(E^{\otimes(k+1) }\otimes \mathcal{H})$, we get \begin{equation*}
	\xi \in N({\widetilde{V}}_{k+1}) \subseteq (I_{E^{\otimes {k+1}}} \otimes \widetilde{V})(E^{\otimes(k+2) }\otimes \mathcal{H}),
	\end{equation*} which proves the desired inequality.

	$(2) \implies(3):$ We will prove inequality (3) by Mathematical induction. For $ m=1 $, nothing to prove. Suppose (3) is true for $ m =k.$ Now we need to show that it is correct for $m =k+1,$ that is,\begin{equation*}
	N({\widetilde{V}}_n) \subseteq(I_{E^{\otimes n}} \otimes \widetilde{V}_{k+1})(E^{\otimes(n+k+1) }\otimes \mathcal{H}).
	\end{equation*} Let $\xi\in N({\widetilde{V}}_n) ,$ using (2) there exists $ \eta\in N{(\wV_{n+1})} $ such that $\xi = (I_{E^{\ot n}} \ot \wV)\eta$ and by induction hypothesis, we have \begin{align*}
	\xi = (I_{E^{\ot n}} \ot \wV)\eta \in(I_{E^{\ot n}} \ot \wV) (I_{E^{\ot n+1}} \ot \wV_{k})(E^{\ot {n+k+1}}\ot \mathcal{H})=(I_{E^{\otimes n}} \otimes \widetilde{V}_{k+1})(E^{\otimes(n+k+1) }\otimes \mathcal{H}).
	\end{align*} Thus we get (3).\\
	$(3)\implies (4):$ Assume $ N({\widetilde{V}}_n) \subseteq(I_{E^{\otimes n}} \otimes \widetilde{V}_m)(E^{\otimes(n+m) }\otimes \mathcal{H})$, for $ m, n \in \mathbb{N}$. By Lemma \ref{LL} and (3), \begin{align*}
	(I_{E^{\otimes n}} \otimes \widetilde{V}_m) N(\widetilde{V}_{m+n}) = N(\widetilde{V}_{n})  \cap (I_{E^{\otimes n}} \otimes \widetilde{V}_m) (E^{\otimes (m+n)} \otimes \mathcal{H})= N({\wV_{n}})
	\end{align*} which proves (4).

	$(4) \implies(1):$ Obvious.
\end{proof}

Note that if $\wV$ satisfy (3) of the above theorem, we get \begin{equation*}
N({\widetilde{V}}_n) \subseteq E^{\otimes n} \otimes R^{\infty}({V}),\:\:\:n \in \mathbb{N}.
\end{equation*} In particular $N({\widetilde{V}}) \subseteq E \otimes R^{\infty}({V}).$ On another hand, if $N({\widetilde{V}}) \subseteq E \otimes R^{\infty}({V}),$ then $\wV$ satisfies all conditions of above theorem.

\begin{definition}
	Consider  a CB-representation  $(\sigma,V)$  of  $E$ on $\mathcal{H}.$ We call $(\sigma,V)$ is {\em regular} if range of $\widetilde{V}$ is closed and it satisfy any one of the equivalent conditions of Theorem \ref{cc}.
\end{definition}
\begin{definition}[Algebraic Core of  $(\sigma,V)$]
	
	Consider  a CB-representation $(\sigma,V)$ of $E$ on $\mathcal{H}.$ The {\rm algebraic core} of $(\sigma,V)$, denote by $C(\wV)$, is the greatest invariant subspace $\mathcal{K}$ of $\mathcal{H}$ such that $\widetilde{V}(E \ot \mathcal{K} ) =\mathcal{K} .$
\end{definition} Clearly for all CB-representation $(\sigma,V)$ of $E$ on $\mathcal{H}$, we have \begin{align*}
C(\widetilde{V}) = \widetilde{V}(E \ot C(\widetilde{V})) &=\widetilde{V}(E \ot \widetilde{V}(E \ot C(\widetilde{V})))  =\widetilde{V}({I_E \ot \widetilde{V}})(E \ot E \ot C(\widetilde{V})) \\& = \widetilde{V}_2(E^{\ot 2} \ot C({\widetilde{V}}))= \cdots = \widetilde{V}_n(E^{\ot n} \ot C({\widetilde{V}})) \subseteq \widetilde{V}_n(E^{\ot n} \ot \mathcal{H}), \quad  n \in \mathbb{N}.\end{align*}It follows that $C(\widetilde{V}) \subseteq R^{\infty}({V})$.

The following theorem gives us an analytic approach to the algebraic core of  $(\sigma,V)$.
\begin{theorem}\label{Core}
	Consider a CB-representation $(\sigma,V)$  of $E$ on $\mathcal{H}.$ If $ h \in C(\widetilde{V}) $ then there is a sequence $(\xi_n) $ with $ h= \xi_0 $
	and  $\xi_n\in  {E}^{\otimes n} \ot  C(\widetilde{V}),$ such that $(I_{{E}^{\otimes n}}\ot \widetilde{V})( \xi_{n+1})=\xi_n.$
\end{theorem}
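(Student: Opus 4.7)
The plan is to build $(\xi_n)$ inductively, at each step lifting $\xi_n \in E^{\otimes n} \otimes C(\widetilde{V})$ to a preimage $\xi_{n+1} \in E^{\otimes(n+1)} \otimes C(\widetilde{V})$ under $I_{E^{\otimes n}} \otimes \widetilde{V}$. The identity $\widetilde{V}(E \otimes C(\widetilde{V})) = C(\widetilde{V})$ that defines $C(\widetilde{V})$ supplies a base case, but to iterate while staying inside $E^{\otimes(n+1)} \otimes C(\widetilde{V})$ I need a coherent way of picking preimages; I will obtain this from a bounded $\mathcal{B}$-module right inverse of the restriction $T := \widetilde{V}|_{E \otimes C(\widetilde{V})}$, and then tensor it with $I_{E^{\otimes n}}$.

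For the construction of the right inverse, observe first that by Lemma \ref{MSL} the intertwining relation $\widetilde{V}(\phi(a) \otimes I_{\mathcal{H}}) = \sigma(a)\widetilde{V}$ makes $T$ a surjective bounded $\mathcal{B}$-linear map onto $C(\widetilde{V})$, so $N := \ker T$ is a closed $\mathcal{B}$-invariant subspace of $E \otimes C(\widetilde{V})$. The left $\mathcal{B}$-action on $E \otimes C(\widetilde{V})$ is the $*$-representation $a \mapsto \phi(a) \otimes I$, so $N^{\perp}$ is again $\mathcal{B}$-invariant. Hence $T|_{N^{\perp}}$ is a bounded $\mathcal{B}$-linear bijection onto $C(\widetilde{V})$, and by the open mapping theorem it has a bounded inverse $S : C(\widetilde{V}) \to N^{\perp} \subseteq E \otimes C(\widetilde{V})$. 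It is routine to verify that $(\phi(a) \otimes I)S(k) - S(\sigma(a) k)$ lies in $N \cap N^{\perp} = \{0\}$, so $S$ is $\mathcal{B}$-linear and $I_{E^{\otimes n}} \otimes S$ is a well-defined bounded operator from $E^{\otimes n} \otimes C(\widetilde{V})$ into $E^{\otimes(n+1)} \otimes C(\widetilde{V})$.

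Setting $\xi_0 := h$ and recursively $\xi_{n+1} := (I_{E^{\otimes n}} \otimes S)(\xi_n)$ then produces the required sequence: an induction gives $\xi_n \in E^{\otimes n} \otimes C(\widetilde{V})$, and from $TS = I_{C(\widetilde{V})}$ one reads off
\[
(I_{E^{\otimes n}} \otimes \widetilde{V})(\xi_{n+1}) = (I_{E^{\otimes n}} \otimes TS)(\xi_n) = \xi_n.
\]
The main obstacle is obtaining $S$ as a $\mathcal{B}$-module map rather than merely a bounded right inverse: a generic bounded right inverse cannot be tensored with $I_{E^{\otimes n}}$ across the interior product. What rescues the argument is the $*$-representation structure of the $\mathcal{B}$-action on $E \otimes C(\widetilde{V})$, which makes orthogonal complements of invariant subspaces invariant and lets us invert $T$ on a $\mathcal{B}$-invariant complement of its kernel.
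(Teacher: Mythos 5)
Your proof is correct (granting, as the paper implicitly does, that $C(\widetilde{V})$ is a closed $\sigma(\mathcal{B})$-invariant subspace, so that the spaces $E^{\otimes n}\otimes C(\widetilde{V})$ are Hilbert spaces and the open mapping theorem is available), and it follows the same inductive backward-orbit scheme as the paper, but the mechanism for the lifting step is genuinely different --- and in fact more careful than the paper's. The paper simply picks a preimage at each stage, asserting $E\otimes C(\widetilde{V})=(I_E\otimes\widetilde{V})(E^{\otimes 2}\otimes C(\widetilde{V}))$, i.e.\ that the surjectivity in $\widetilde{V}(E\otimes C(\widetilde{V}))=C(\widetilde{V})$ survives tensoring with $I_E$; when the tensor products are read as closed subspaces this is not automatic (finite sums of elementary tensors can be lifted term by term, but ranges of bounded operators need not be closed), and this is exactly the obstacle you isolate. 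Your construction of a bounded $\mathcal{B}$-linear right inverse $S=\bigl(\widetilde{V}|_{N^{\perp}}\bigr)^{-1}$ --- using that $N=\ker\bigl(\widetilde{V}|_{E\otimes C(\widetilde{V})}\bigr)$, and hence $N^{\perp}$, is invariant under the $*$-representation $a\mapsto\phi(a)\otimes I$, so that $S$ intertwines the $\mathcal{B}$-actions and $I_{E^{\otimes n}}\otimes S$ is legitimately defined on the interior tensor products --- supplies precisely the missing justification and produces all liftings at once via $\xi_{n+1}=(I_{E^{\otimes n}}\otimes S)\xi_n$. What the paper's bare-hands choice buys is brevity (and it suffices verbatim if the tensor products are interpreted algebraically); what your argument buys is a uniformly bounded, module-linear choice of preimages valid for arbitrary elements of the closed subspaces $E^{\otimes n}\otimes C(\widetilde{V})$, at the modest price of using closedness of $C(\widetilde{V})$, which the paper's notion of invariant subspace already presupposes. (One cosmetic point: the surjectivity of $T=\widetilde{V}|_{E\otimes C(\widetilde{V})}$ onto $C(\widetilde{V})$ comes from the defining property of the algebraic core, not from the covariance lemma, which only gives the intertwining.)
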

\begin{proof} Let $ h \in C(\widetilde{V}),$
	by definition of $C(\wV)$ there is an element $\xi_1  \in E \ot C(\widetilde{V})$ such that $ h = \widetilde{V}( \xi_1)$. 	
	Since $ \xi_1 \in E \ot C(\widetilde{V})=(I_E\ot \widetilde{V})({E}^{\otimes 2} \ot  C(\widetilde{V})),$ there is $\xi_2\in {E}^{\otimes 2} \ot  C(\widetilde{V})$ such that $(I_E\ot \widetilde{V})(\xi_2)=\xi_1,$ and so $ h=\widetilde{V}( \xi_1)=\widetilde{V} (I_E\ot \widetilde{V})(\xi_2)=\widetilde{V}_2(\xi_2).$ More generally, there exists a sequence $(\xi_n)$, with $\xi_n\in  {E}^{\otimes n} \ot  C(\widetilde{V}),$ for which $ h= \xi_0 $
	and $(I_{{E}^{\otimes n}}\ot \widetilde{V})( \xi_{n+1})=\xi_n.$
\end{proof}
The following theorem establishes the relationship between the generalized range and the algebraic core of $(\sigma,V)$.
\begin{theorem}\label{reg}
	Consider  a regular CB-representation  $(\sigma,V)$ of $E$ on $\mathcal{H}.$ Then \begin{equation*}
	\widetilde{V}({E} \otimes{ {{R}^{\infty}{({V})}}}) = { {{R}^{\infty}{({V})}}}=C(\widetilde{V}).
	\end{equation*}
\end{theorem}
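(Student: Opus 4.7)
The plan is to first establish the left equality $\widetilde V(E\otimes R^\infty(V))=R^\infty(V)$, and then combine this with the already noted inclusion $C(\widetilde V)\subseteq R^\infty(V)$ and the maximality built into the definition of $C(\widetilde V)$ to conclude $R^\infty(V)=C(\widetilde V)$. The easy inclusion $\widetilde V(E\otimes R^\infty(V))\subseteq R^\infty(V)$ does not require regularity: on generators $\xi\otimes h$ with $\xi\in E$ and $h\in R^\infty(V)$, for each $n$ write $h=\widetilde V_n(\eta_n)$ for some $\eta_n\in E^{\otimes n}\otimes\mathcal H$; then $\widetilde V(\xi\otimes h)=\widetilde V_{n+1}(\xi\otimes\eta_n)\in R(\widetilde V_{n+1})\subseteq R(\widetilde V_n)$, and intersecting over $n$ places the image in $R^\infty(V)$.

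The reverse inclusion $R^\infty(V)\subseteq\widetilde V(E\otimes R^\infty(V))$ is where regularity enters. Let $h\in R^\infty(V)$ and, for each $n\geq 1$, choose $\alpha_n\in E^{\otimes n}\otimes\mathcal H$ with $\widetilde V_n(\alpha_n)=h$. Set $\beta_n:=(I_E\otimes\widetilde V_{n-1})(\alpha_n)\in E\otimes R(\widetilde V_{n-1})$, so that $\widetilde V(\beta_n)=h$ for every $n$ and $\beta_1-\beta_n\in N(\widetilde V)$. By the observation recorded immediately after Theorem \ref{cc}, regularity yields $N(\widetilde V)\subseteq E\otimes R^\infty(V)\subseteq E\otimes R(\widetilde V_{n-1})$; combining this with $\beta_n\in E\otimes R(\widetilde V_{n-1})$ gives $\beta_1\in E\otimes R(\widetilde V_{n-1})$ for every $n$, and hence $\beta_1\in\bigcap_{n\geq 1}\bigl(E\otimes R(\widetilde V_{n-1})\bigr)=E\otimes R^\infty(V)$. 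Thus $h=\widetilde V(\beta_1)\in\widetilde V(E\otimes R^\infty(V))$. Once $\widetilde V(E\otimes R^\infty(V))=R^\infty(V)$ is secured, the intertwining $\sigma(b)\widetilde V_n=\widetilde V_n(\phi^n(b)\otimes I_{\mathcal H})$ shows each $R(\widetilde V_n)$ is $\sigma$-invariant, so $R^\infty(V)$ is $\sigma$-invariant, and the maximality clause in the definition of $C(\widetilde V)$ forces $R^\infty(V)\subseteq C(\widetilde V)$, closing the chain of equalities.

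The step I anticipate as the main obstacle is the identification $\bigcap_{n}\bigl(E\otimes R(\widetilde V_{n-1})\bigr)=E\otimes R^\infty(V)$. In the scalar Hilbert space analogue due to Ezzahraoui--Mbekhta--Zerouali this identity is immediate, but in the Hilbert $C^*$-module framework the ranges $R(\widetilde V_n)$ are not a priori closed, and the external tensor product with $E$ does not obviously commute with infinite intersections of non-closed $\mathcal B$-invariant submodules. I would address this by first propagating regularity to every power: inductively from $\widetilde V_n=\widetilde V(I_E\otimes\widetilde V_{n-1})$ together with the inclusion $N(\widetilde V_n)\subseteq E^{\otimes n}\otimes R^\infty(V)$ recorded after Theorem \ref{cc}, each $\widetilde V_n$ should also have closed range, after which $E\otimes(\cdot)$ distributes over descending intersections of closed $\mathcal B$-invariant subspaces of $\mathcal H$, giving the required identity.
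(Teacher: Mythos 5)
Your argument is correct and follows essentially the same route as the paper's proof: the forward inclusion by applying $V(\xi)$ to preimages of $h\in R^\infty(V)$, the reverse inclusion by writing $\eta_1-(I_E\otimes\widetilde V_n)(\eta_{n+1})\in N(\widetilde V)\subseteq E\otimes R^\infty(V)$ and concluding $\eta_1\in E\otimes R^\infty(V)$, and then the identification with $C(\widetilde V)$ via the previously noted inclusion $C(\widetilde V)\subseteq R^\infty(V)$ together with the maximality in the definition of the algebraic core. The only difference is that you isolate and justify the step $\bigcap_n\bigl(E\otimes R(\widetilde V_n)\bigr)=E\otimes R^\infty(V)$, which the paper's proof passes over tersely; your repair is sound, since regularity does force each $R(\widetilde V_n)$ to be closed (the paper obtains this later from Proposition \ref{5.3}, and it also follows from the closed-range criterion for the composition $\widetilde V_{n-1}(I_{E^{\otimes n-1}}\otimes\widetilde V)$ using condition (2) of Theorem \ref{cc}), after which the projections $I_E\otimes P_{R(\widetilde V_n)}$ decrease strongly to $I_E\otimes P_{R^\infty(V)}$, giving the required commutation of $E\otimes(\cdot)$ with the descending intersection.
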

\begin{proof} For $ h\in  {R}^{\infty}{({V})} \:\big(=\bigcap_{n\in \mathbb{N}_{0}}R(\widetilde{V}_{n})\big),$ there is $\eta_n\in {E}^{\otimes n}\otimes\mathcal{H}$ such that	$h = {\widetilde{V}}_n(\eta_n).
	$
	Now apply an operator $V(\xi)$ both sides \begin{align*}
	V(\xi)h=V(\xi)  {\widetilde{V}}_n(\eta_n)
	= \widetilde{V}_{n+1}(\xi\otimes \eta_n)  \in R(\widetilde{V}_{n+1}), \quad    n\in \mathbb{N},
	\end{align*}  where  $ \xi \in E$.
	Since $	\widetilde{V}(\xi\otimes h)$ $\in$ ${ {{R}{({\widetilde{V}}_n)}}}$ for every $ n\in \mathbb{N},$  $	\widetilde{V}(\xi\otimes h)$ $\in$ ${ {{R}^{\infty}{(\widetilde{V})}}}.$
	Thus, we obtain
	\begin{equation*}
	\widetilde{V}({E}\otimes{ {{R}^{\infty}{({V})}}}) \subseteq{ {{R}^{\infty}{({V})}}}.
	\end{equation*}
	
	For the	converse part, let $h\in { {{R}^{\infty}{({V})}}},$ then there is a sequence ($\eta_n$), where $\eta_n\in {E}^{\otimes n}\otimes\mathcal{H},$ so that \begin{equation*}
	h =\widetilde{V}(\eta_1) = \widetilde{V}_2(\eta_2) = \cdots = \widetilde{V}_{n+1}(\eta_{n+1})=\widetilde{V}(I_E \otimes \widetilde{V}_{n})(\eta_{n+1})
	\end{equation*} and hence 		
	$\eta_1 - (I_E\otimes{\widetilde{V}}_n)(\eta_{n+1}) \in { {{N}{(\widetilde{V})}}}.$ Since $(\sigma,V)$ is  regular,
	\begin{equation*}
	\eta_1 - (I_E\otimes{\widetilde{V}}_n)(\eta_{n+1})  \in{E}  \otimes { {{R}^{\infty}{({V})}}}.
	\end{equation*}	Since ${E}\otimes{ {{R}^{\infty}{({V})}}}$ is a subspace,  we get $\eta_1 \in {E}\otimes{ {{R}^{\infty}{({V})}}}.$   Thus, $ h \in \widetilde{V}({E} \otimes{ {{R}^{\infty}{({V})}}}$) which gives  ${ {{R}^{\infty}{({V})}}}\subseteq\widetilde{V}(E \otimes{ {{R}^{\infty}{({V})}}}),$ then  $\widetilde{V}({E} \otimes{ {{R}^{\infty}{({V})}}}) = { {{R}^{\infty}{({V})}}}$  and  by the definition of $C(\widetilde{V})$, we deduce that $R^{\infty}({V})\subseteq C(\widetilde{V}).$ Since $C(\widetilde{V}) \subseteq R^{\infty}({V}),$ we  get  $R^{\infty}({V})= C(\widetilde{V})  $ which completes the proof of the theorem.
\end{proof}

\section{Regularity condition of the generalized inverse of a covariant representation}\label{section 4}  Definitions of the generalized inverse and bi-regularity of the covariant representation are provided in this section and derive some basic properties.  Further, we prove the generalized range of a covariant representation is invariant under the generalized inverse.
\begin{definition}
	Consider a CB-representation $(\sigma,V)$ of $E$ on $\mathcal{H}.$ A bounded operator $S: \mathcal{H} \rightarrow E\ot \mathcal{H}$  is said to be a {\em generalized inverse} of $\widetilde{V}$ if 	$
	\widetilde{V}S\widetilde{V}=\widetilde{V}$ and $ S\widetilde{V}S=S.
	$
\end{definition}
Assume $(\sigma,V)$ to be a CB-representation of $E$ on $\mathcal{H}$ and let $S$ be a generalized inverse of $\widetilde{V}$. For $n \in \mathbb{N} $, define
$S^{(n)}: H \rightarrow E^{\ot n } \ot \mathcal{H}$ by \begin{align}\label{snn}
S^{(n)}:= (I_{E^{\otimes n-1}}\ot S)(I_{E^{\otimes n-2}}\ot S) \cdots(I_{E}\ot S)S.
\end{align}
Observe that  \begin{align*}
(I_{E^{\ot m}} \ot S^{(n)})S^{(m)}=S^{(m+n)},\:\:\:  m,n \in \mathbb{N}.\qedhere
\end{align*} Note that if  $(\sigma,V)$ is  isometric, then $S = \widetilde{V}^{*}$.

The next lemma will take a critical role in defining the bi-regularity condition of the CB-representation.

\begin{lemma}\label{kk}
	Consider a CB-representation $(\sigma,V)$  of $E$ on $\mathcal{H}.$ Suppose $S$ is a generalized inverse of $\widetilde{V}$. Then for every $ m,n \in \mathbb{N},$
	\begin{equation*}	S^{(m)} N(S^{(m+n)}) =N(I_{E^{\ot m}} \ot  S^{(n)})  \cap S^{(m)}( \mathcal{H}).
	\end{equation*}
\end{lemma}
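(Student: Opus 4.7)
The plan is to mimic the strategy of Lemma \ref{LL}, replacing the intertwining with powers of $\widetilde{V}$ by the iterates $S^{(k)}$ of the generalized inverse. The single structural fact that drives everything is the semigroup-type identity recorded just before the lemma, namely
\[
S^{(m+n)} \;=\; (I_{E^{\ot m}} \ot S^{(n)})\,S^{(m)}, \qquad m,n \in \mathbb{N},
\]
which plays here the role that $\widetilde{V}_{m+n} = \widetilde{V}_n(I_{E^{\ot n}} \ot \widetilde{V}_m)$ played in the proof of Lemma \ref{LL}. With this in hand the verification is a direct chase.

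For the inclusion ``$\subseteq$'', I would take an arbitrary $x$ in the left-hand side, write $x = S^{(m)}(y)$ with $y \in N(S^{(m+n)})$, and apply $I_{E^{\ot m}} \ot S^{(n)}$ to both sides; the identity above then immediately produces $(I_{E^{\ot m}} \ot S^{(n)})(x) = S^{(m+n)}(y) = 0$, while membership of $x$ in $S^{(m)}(\mathcal{H})$ is automatic.

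For the reverse inclusion, suppose $x \in N(I_{E^{\ot m}} \ot S^{(n)}) \cap S^{(m)}(\mathcal{H})$ and choose $z \in \mathcal{H}$ with $x = S^{(m)}(z)$. The key observation is that one does \emph{not} need to modify $z$: the computation
\[
S^{(m+n)}(z) \;=\; (I_{E^{\ot m}} \ot S^{(n)})\,S^{(m)}(z) \;=\; (I_{E^{\ot m}} \ot S^{(n)})(x) \;=\; 0
\]
shows that $z$ itself lies in $N(S^{(m+n)})$, and hence $x = S^{(m)}(z) \in S^{(m)} N(S^{(m+n)})$.

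There is essentially no obstacle here: once the identity $(I_{E^{\ot m}} \ot S^{(n)}) S^{(m)} = S^{(m+n)}$ is granted, both inclusions reduce to one-line manipulations and no properties of $S$ beyond its definition are needed. The only thing worth being careful about is orienting the tensor-factor conventions in $S^{(m+n)}$ so that $I_{E^{\ot m}} \ot S^{(n)}$ really acts on the target space $E^{\ot m} \ot \mathcal{H}$ of $S^{(m)}$; this is a bookkeeping check rather than a genuine difficulty, and it follows directly from the definition \eqref{snn}.
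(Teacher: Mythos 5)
Your proposal is correct and coincides with the paper's own argument: both directions rest on the identity $(I_{E^{\ot m}} \ot S^{(n)})S^{(m)} = S^{(m+n)}$ recorded just before the lemma, and the paper's reverse inclusion likewise keeps the preimage $h$ (your $z$) unchanged and simply checks $S^{(m+n)}h = 0$. No gap to report.
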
	
\begin{proof}
	Let $n, m \in \mathbb{N}$ and  $ h \in N(S^{(m+n)}),$ then  $
	(I_{E^{\ot m}} \ot S^{(n)})S^{(m)}h =S^{(m+n)}h = 0
	$ and thus $S^{(m)}h\in N(I_{E^{\ot m}} \ot  S^{(n)}) .$ Therefore, we obtain \begin{equation*}
	S^{(m)} N(S^{(m+n)}) \subseteq N(I_{E^{\ot m}} \ot  S^{(n)})  \cap S^{(m)}(\mathcal{H}).
	\end{equation*} On the other hand, let $ \xi \in  N(I_{E^{\ot m}} \ot  S^{(n)})  \cap S^{(m)}( \mathcal{H})$, then $ (I_{E^{\ot m}} \ot S^{(n)})\xi= 0 $ and $ \xi = S^{(m)}h$ for some $ h \in  \mathcal{H}.$ Consequently \begin{equation*}
	S^{(m+n)}h=(I_{E^{\ot m}} \ot S^{(n)})S^{(m)}h=
	(I_{E^{\ot m}} \ot S^{(n)})\xi= 0
	\end{equation*} which implies $  h \in N(S^{(m+n)})$. Hence $ \xi= S^{(m)} h \in S^{(m)}(N(S^{(m+n)})),$ so the opposite inclusion is verified.\qedhere
	
\end{proof}

The following result presents some valuable connections between the kernel of generalized inverse and the generalized range, which leads to defining the bi-regularity condition.

\begin{theorem}\label{S}
	Consider a CB-representation $(\sigma,V)$  $E$ on $\mathcal{H}$ and let $S$ be a generalized inverse of $\widetilde{V}$. Then the following statements are equivalent:
	\begin{enumerate}
		\item $ N(I_{E^{\ot m}} \ot S) \subseteq R(S^{(m)})$  for  each  $ m \in \mathbb{N}$ ;
		\item $ N(I_E \ot S^{(n)})\subseteq R(S)$  for  each  $ n \in \mathbb{N}$ ;
		\item $ N(I_{E^{\ot m}}  \ot S^{(n)}) \subseteq R(S^{(m)})$  for  each  $ n, m \in \mathbb{N}$ ;
		\item $ N(I_{E^{\ot m}}\ot S^{(n)})= S^{(m)}(N(S^{(m+n)}))$  for  each  $ n, m \in \mathbb{N}$.
	\end{enumerate}
\end{theorem}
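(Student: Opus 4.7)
The plan is to mirror the cyclic proof of Theorem~\ref{cc}, establishing $(1)\Rightarrow(2)\Rightarrow(3)\Rightarrow(4)\Rightarrow(1)$. The roles played there by $\wt{V}$ and its iterates $\wt{V}_n$ are now played by $S$ and the iterates $S^{(n)}$ defined in~\eqref{snn}, but with the tensor-identity factors sitting on the left of the operator rather than on the right; each step will therefore be the natural dual of its counterpart in Theorem~\ref{cc}.

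For $(1)\Rightarrow(2)$ I would induct on $n$, the base $n=1$ being simply (1) specialised to $m=1$. In the inductive step, given $h\in N(I_E\ot S^{(k+1)})$, I use the factorisation $S^{(k+1)}=(I_{E^{\ot k}}\ot S)S^{(k)}$ to push $(I_E\ot S^{(k)})h$ into $N(I_{E^{\ot k+1}}\ot S)\subseteq R(S^{(k+1)})$ via (1) at $m=k+1$; the complementary factorisation $S^{(k+1)}=(I_E\ot S^{(k)})S$ then produces $\eta\in\mathcal{H}$ with $(I_E\ot S^{(k)})(h-S\eta)=0$, and the inductive hypothesis $N(I_E\ot S^{(k)})\subseteq R(S)$ finishes the step. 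The implication $(3)\Rightarrow(4)$ is a one-line application of Lemma~\ref{kk}: condition (3) forces the intersection $N(I_{E^{\ot m}}\ot S^{(n)})\cap S^{(m)}(\mathcal{H})$ appearing there to be the whole kernel, so $N(I_{E^{\ot m}}\ot S^{(n)})=S^{(m)}N(S^{(m+n)})$. And $(4)\Rightarrow(1)$ follows by setting $n=1$.

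The main obstacle will be $(2)\Rightarrow(3)$, which I would also prove by induction on $m$, the base case $m=1$ being (2). Given $\xi\in N(I_{E^{\ot k+1}}\ot S^{(n)})$, the first move is to exhibit $\xi$ inside $R(I_{E^{\ot k}}\ot S)$; this is where I need to upgrade the single-factor information supplied by (2) to a multi-factor statement, using the tensor-product kernel identity $N(I_{E^{\ot k}}\ot T)=E^{\ot k}\ot N(T)$ (valid for adjointable operators with closed range, exactly as it is used for $\wt{V}_k$ in the proof of Theorem~\ref{cc}), to get $N(I_{E^{\ot k+1}}\ot S^{(n)})=E^{\ot k}\ot N(I_E\ot S^{(n)})\subseteq E^{\ot k}\ot R(S)=R(I_{E^{\ot k}}\ot S)$. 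Once $\xi=(I_{E^{\ot k}}\ot S)\zeta$, the composition law $(I_{E^{\ot k+1}}\ot S^{(n)})(I_{E^{\ot k}}\ot S)=I_{E^{\ot k}}\ot S^{(n+1)}$ places $\zeta\in N(I_{E^{\ot k}}\ot S^{(n+1)})$, and the inductive hypothesis at level $k$ with parameter $n+1$ gives $\zeta=S^{(k)}h$, hence $\xi=S^{(k+1)}h\in R(S^{(k+1)})$. The delicate ingredient is justifying the tensor-product kernel identity for $I_E\ot S^{(n)}$ in the $C^*$-correspondence setting; this is where the regularity and closed-range assumptions carried throughout the section are essential.
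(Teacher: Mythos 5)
Your proposal is correct and takes essentially the same route as the paper: the cyclic chain $(1)\Rightarrow(2)\Rightarrow(3)\Rightarrow(4)\Rightarrow(1)$, with the two factorizations $S^{(k+1)}=(I_{E^{\ot k}}\ot S)S^{(k)}=(I_E\ot S^{(k)})S$ driving the induction in $(1)\Rightarrow(2)$, the tensored form of (2) together with $(I_{E^{\ot k+1}}\ot S^{(n)})(I_{E^{\ot k}}\ot S)=I_{E^{\ot k}}\ot S^{(n+1)}$ in $(2)\Rightarrow(3)$, Lemma \ref{kk} for $(3)\Rightarrow(4)$, and the specialization $n=1$ for $(4)\Rightarrow(1)$. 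The only difference is cosmetic: the paper simply asserts the inclusion $N(I_{E^{\ot k+1}}\ot S^{(n)})\subseteq R(I_{E^{\ot k}}\ot S)$ that you justify via the kernel identity (and note the theorem assumes no regularity --- closedness of $R(\wt{V})$ and $R(S)$ already follows from the generalized-inverse identities, since they are ranges of the bounded idempotents $\wt{V}S$ and $S\wt{V}$).
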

\begin{proof}
	$ (1) \implies (2):$ We will prove it by Mathematical induction. For $ n= 1$, we need to show that \begin{equation*}
	N(I_E \ot S)\subseteq R(S),
	\end{equation*} which is true when we substitute $ m =1 $ in (1). Suppose that (2) is  valid for $ n=k$. We have to prove that (2) holds for $ n=k+1$, i.e., \begin{equation*}
	N(I_E \ot S^{(k+1)})\subseteq R(S) .
	\end{equation*}
	Let $ \xi \in N(I_E \ot S^{(k+1)}),$ then \begin{align*}
	(I_{E^{\ot k+1}} \ot S)(I_{E} \ot S^{(k)})(\xi)= (I_{E}\ot (I_{E^{\ot k}} \ot S)S^{(k)})(\xi)=(I_E \ot S^{(k+1)})(\xi)=0.
	\end{align*} Therefore $ (I_{E} \ot S^{(k)})(\xi) \in N(I_{E^{\ot k+1}} \ot S) \subseteq R(S^{(k+1)}),$ here last inequality follows from (1). Therefore, there is $ h \in N(S^{(k+2)})$ so that
	$(I_{E} \ot S^{(k)})(\xi)= S^{(k+1)}(h)=(I_{E} \ot S^{(k)})S(h).$
	Thus we get $ \xi-S(h) \in N(I_{E} \ot S^{(k)}) \subseteq R(S).$ Hence $ \xi \in R(S)$.
	
	$(2)\implies (3):$	We will prove inequality (3) by Mathematical induction.  For $ m=1 $, nothing to prove. Suppose (3) is valid for $ m =k.$ Now we need to show that it is valid for $m =k+1$, that is,\begin{equation*}
	N(I_{E^{\ot k+1}}  \ot S^{(n)}) \subseteq R(S^{(k+1)}).
	\end{equation*} Let $\xi\in N(I_{E^{\ot k+1}}  \ot S^{(n)}),$ then by (2)  $  	N(I_{E^{\ot k+1}}  \ot S^{(n)}) \subseteq (I_{E^{\ot k }} \ot S)(E^{\ot k+1} \ot \mathcal{H}) ,$ therefore there exists $ \eta\in N(I_{E^{\ot k}}\ot S^{(n+1)})  $ such that $\xi = (I_{E^{\ot k}}\ot S)(\eta)  $ and by induction hypothesis $ \eta\in R({S^{(k)}}). $ It follows that $ \xi = (I_{E^{\ot k}}\ot S)(\eta)\in R({S^{(k+1)}}) $ and hence $	N(I_{E^{\ot k+1}}  \ot S^{(n)}) \subseteq R(S^{(k+1)}).$

	$(3) \implies (4):$ Suppose $ N(I_{E^{\ot m}}  \ot S^{(n)}) \subseteq R(S^{(m)})$, for $ m, n \in \mathbb{N}.$ By Lemma \ref{kk} and (3), we get \begin{align*}
	S^{(m)} N(S^{(m+n)}) = N(I_{E^{\ot m}} \ot  S^{(n)})  \cap S^{(m)}( \mathcal{H})=  N(I_{E^{\ot m}} \ot  S^{(n)}),
	\end{align*} which proves (4).
	
	$ (4) \implies (1):$  Trivial.  
\end{proof}
The following definition draws inspiration from a recent article by Ezzahraoui, Mbekhta, and Zerouali in \cite{EMZ21}.
\begin{definition}
	Consider a regular CB-representation $(\sigma,V)$ of $E$ on $\mathcal{H}$ and let $S$ be a generalized inverse of $\widetilde{V}.$ We say that $(\sigma,V)$ is  {\rm bi-regular} if its generalized inverse $S$ satisfies any one of conditions of Theorem \ref{S}.
\end{definition}
\begin{theorem}\label{3.6}
	Consider a  regular CB-representation $(\sigma,V)$ of $E$ on $\mathcal{H}.$ If $S$ is the generalized inverse of $\widetilde{V},$
	then  $\widetilde{V}_{n}S^{(n)} \widetilde{V}_{n}= \widetilde{V}_{n},$ $ n \in \mathbb{N}.$
\end{theorem}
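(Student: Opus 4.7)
The plan is to prove the equality $\wV_n S^{(n)} \wV_n = \wV_n$ by induction on $n$, the base case $n=1$ being the defining identity $\wV S \wV = \wV$ of a generalized inverse. The inductive step amounts to showing that for every $\xi \in E^{\ot (n+1)} \ot \mathcal{H}$, the vector $S^{(n+1)} \wV_{n+1}(\xi) - \xi$ lies in $N(\wV_{n+1})$, which is equivalent to the desired equality evaluated at $\xi$.

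The key idea is to peel off one copy of $\wV$ on the right of $\wV_{n+1}$ and one copy of $S$ on the left of $S^{(n+1)}$, reducing to an application of the base case plus the inductive hypothesis. Setting $\eta := (I_E \ot \wV_n)(\xi)$, so that $\wV_{n+1}(\xi) = \wV(\eta)$, the base case applied to $\eta$ gives $S\wV\eta - \eta \in N(\wV)$. This is exactly where regularity enters: condition (1) of Theorem \ref{cc} with $m = n$ yields some $\zeta \in E^{\ot (n+1)} \ot \mathcal{H}$ with $S\wV\eta - \eta = (I_E \ot \wV_n)(\zeta)$, and since $\wV$ annihilates the left-hand side, $\zeta$ automatically lies in $N(\wV_{n+1})$.

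Expanding $S^{(n+1)} = (I_E \ot S^{(n)})\, S$, a short computation rewrites
\[
S^{(n+1)} \wV_{n+1}(\xi) \;=\; (I_E \ot S^{(n)}) S\wV(\eta) \;=\; (I_E \ot S^{(n)} \wV_n)(\xi + \zeta).
\]
The inductive hypothesis, read as $(S^{(n)} \wV_n - I)(E^{\ot n} \ot \mathcal{H}) \subseteq N(\wV_n)$, combined with the evident containments $E \ot N(\wV_n) \subseteq N(I_E \ot \wV_n) \subseteq N(\wV_{n+1})$, shows that $S^{(n+1)} \wV_{n+1}(\xi) - (\xi + \zeta) \in N(\wV_{n+1})$. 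Since $\zeta \in N(\wV_{n+1})$ as well, $S^{(n+1)} \wV_{n+1}(\xi) - \xi$ lies in $N(\wV_{n+1})$, which closes the induction.

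The only delicate point is to invoke regularity at exactly the step that produces $\zeta$: without this containment the identity genuinely fails, as an easy non-regular rank-two example on $\mathbb{C}^2$ already produces $T^2 S^2 T^2 \neq T^2$. Everything else is routine manipulation of the $I_{E^{\ot k}} \ot (-)$ slots and the algebraic relations $\wV S \wV = \wV$, $S \wV S = S$.
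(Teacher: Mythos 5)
Your proof is correct, but it is organized differently from the paper's. The paper does not induct: it first proves the auxiliary kernel containments $(I_{E^{\ot n}}\ot S)N(\wV_n)\subseteq N(\wV_{n+1})$ and hence $(I_E\ot S^{(k)})N(\wV)\subseteq N(\wV_{k+1})$, using regularity in the form $N(\wV_n)\subseteq (I_{E^{\ot n}}\ot\wV)(E^{\ot (n+1)}\ot\mathcal H)$ (condition (2) of Theorem \ref{cc}), and then verifies the identity all at once by writing $\wV_nS^{(n)}\wV_n-\wV_n$ as a telescoping sum of $n$ terms, each of which vanishes because the defect $(I_{E^{\ot k-1}}\ot S\wV)-I$ lands in $N(\wV_k)$ and the remaining $S$-slots push that kernel into $N(\wV_n)$. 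You instead induct on $n$ and invoke regularity in the dual form $N(\wV)\subseteq (I_E\ot\wV_n)(E^{\ot(n+1)}\ot\mathcal H)$ (condition (1) with $m=n$) to absorb the single defect $S\wV\eta-\eta$ as $(I_E\ot\wV_n)\zeta$ with $\zeta\in N(\wV_{n+1})$, closing with the inductive hypothesis and the trivial containments $E\ot N(\wV_n)\subseteq N(I_E\ot\wV_n)\subseteq N(\wV_{n+1})$; no ``$S$ maps kernels into kernels'' lemma is needed. Both arguments rest on Theorem \ref{cc}, and your induction is essentially a repackaged telescoping, but the mechanics differ: yours is leaner for this one statement, while the paper's kernel-containment-plus-telescoping format is reused almost verbatim for the companion identity $S^{(n)}\wV_nS^{(n)}=S^{(n)}$ in the following corollary, which your route would have to redo separately. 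One cosmetic slip in your closing remark: a rank-two operator on $\mathbb C^2$ is invertible, hence regular, so it cannot witness failure; what does work is a rank-one non-regular example on $\mathbb C^2$ (e.g.\ $T=\mathrm{diag}(1,0)$ with a non-orthogonal generalized inverse), which indeed gives $T^2S^2T^2\neq T^2$ --- this aside does not affect the validity of your proof.
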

\begin{proof}
	
	For every $ k \geq 1$, we begin by demonstrating that  $(I_{E} \ot S^{(k)}) N(\widetilde{V}) \subseteq N(\widetilde{V}_{k+1})$.
	For that, we need to prove the following inequality \begin{equation}\label{(0.1)}
	(I_{E^{\otimes n}} \otimes S) N(\widetilde{V}_{n})\subseteq N(\widetilde{V}_{n+1}) ,\quad \mbox{where}\:\:\: n \in \mathbb{N}.
	\end{equation}
	Let
	$	\xi \in N(\widetilde{V}_{n})\subseteq(I_{E^{\ot n}} \ot \widetilde{V})(E^{\ot {n+1}} \ot \mathcal{H})$
	(using by Theorem \ref{cc}), there is $ \eta \in E^{\ot {n+1}} \ot \mathcal{H}$ such that $ \xi = (I_{E^{\ot n}} \ot \widetilde{V})(\eta) $. Observe that \begin{align*}\widetilde{V}_{n+1} (I_{E^{\otimes n}} \otimes S)(\xi) &= \widetilde{V}_{n}(I_{E^{\ot n}} \ot \widetilde{V}S\widetilde{V})(\eta)=\widetilde{V}_{n}(I_{E^{\ot n}} \ot \widetilde{V})( \eta) =0
	\end{align*}
	and hence  the Inequality (\ref{(0.1)}). 
	Using the  Inequality (\ref{(0.1)}), it is easy to see that
	\begin{equation}\label{(0.2)}
	(I_{E} \ot S^{(k)})N(\widetilde{V}) \subseteq N(\widetilde{V}_{k+1}).
	\end{equation}
	Now, we show that
	$\widetilde{V}_{n}S^{(n)}\widetilde{V}_{n}= \widetilde{V}_{n}$  for every  $ n \in \mathbb{N}.$ Consider \begin{align*}
	\widetilde{V}_{n} S^{(n)} \widetilde{V}_{n}  -  \widetilde{V}_{n} &=\widetilde{V}_{n}\big(\sum_{k=1}^{n} (I_{E^{\otimes k-1}} \ot S^{(n-k+1)} \widetilde{V}_{n-k+1})-(I_{E^{\otimes k}} \ot S^{(n-k)}  \widetilde{V}_{n-k})\big) \\&=\sum_{k=1}^{n} \widetilde{V}_{n} (I_{E^{\otimes k}} \ot S^{(n-k)})((I_{E^{\otimes k-1}}\ot S \widetilde{V})-I_{E^{\otimes k }\ot \mathcal{H}})(I_{E^{\otimes k}} \ot \widetilde{V}_{n-k}).
	\end{align*} Note that $ ((I_{E^{\otimes k-1}}\ot S \widetilde{V})-I_{E^{\otimes k }\ot \mathcal{H}})\xi_k \in N(\widetilde{V}_{k})$ for every $ \xi_k \in {E^{\otimes k }\ot \mathcal{H}}$ and Equations (\ref{(0.1)}) and (\ref{(0.2)}) follows that \begin{equation*}
	(I_{E^{\otimes k}} \ot S^{(n-k)}) ((I_{E^{\otimes k-1}}\ot S \widetilde{V})-I_{E^{\otimes k }\ot \mathcal{H}})\xi_k \in N(\widetilde{V}_{n}),
	\end{equation*} for $ 1 \leq k \leq n $. Therefore, \begin{equation*}
	\widetilde{V}_{n} (I_{E^{\otimes k}} \ot S^{(n-k)})((I_{E^{\otimes k-1}}\ot S \widetilde{V})-I_{E^{\otimes k }\ot \mathcal{H}})=0.
	\end{equation*} So, we get $\widetilde{V}_{n}S^{(n)}\widetilde{V}_{n}= \widetilde{V}_{n},$ for $ n \in \mathbb{N}.$
\end{proof}
\begin{corollary}
	Consider a bi-regular CB-representation $(\sigma,V)$ of $E$ on $\mathcal{H}$ and let $S$ be a generalized inverse of $\widetilde{V}.$ Then for each $ n\in \mathbb{N},$ $S^{(n)}$ is a generalized inverse of $\widetilde{V}_{n},$ that is,  \begin{align*}
	S^{(n)}\widetilde{V}_{n}S^{(n)}=S^{(n)} \quad	and \quad
	\widetilde{V}_{n}S^{(n)}\widetilde{V}_{n}= \widetilde{V}_{n}.
	\end{align*}
\end{corollary}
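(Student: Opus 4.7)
Theorem~\ref{3.6} already establishes the first identity $\widetilde{V}_n S^{(n)} \widetilde{V}_n = \widetilde{V}_n$, which requires only regularity, so the plan is to prove the second identity $S^{(n)} \widetilde{V}_n S^{(n)} = S^{(n)}$, equivalently $R(\widetilde{V}_n S^{(n)} - I_\mathcal{H}) \subseteq N(S^{(n)})$. I would proceed by induction on $n$; the base case $n=1$ is immediate from the defining identity $S\widetilde{V} S = S$, which rewrites as $S(\widetilde{V} S - I_\mathcal{H}) = 0$.

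For the inductive step, assume $S^{(n-1)} \widetilde{V}_{n-1} S^{(n-1)} = S^{(n-1)}$. Using the factorizations $\widetilde{V}_n = \widetilde{V}(I_E \otimes \widetilde{V}_{n-1})$ and $S^{(n)} = (I_E \otimes S^{(n-1)})S$, and adding and subtracting $\widetilde{V} S$, I would split
\[
\widetilde{V}_n S^{(n)} - I_\mathcal{H} \;=\; \widetilde{V}\bigl[I_E \otimes (\widetilde{V}_{n-1} S^{(n-1)} - I_\mathcal{H})\bigr] S \;+\; (\widetilde{V} S - I_\mathcal{H}).
\]
The second summand has range contained in $N(S) \subseteq N(S^{(n)})$, the last inclusion following from $S^{(n)} = (I_E \otimes S^{(n-1)}) S$. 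For the first summand, the inductive hypothesis gives $S^{(n-1)}(\widetilde{V}_{n-1} S^{(n-1)} - I_\mathcal{H}) = 0$, whence the factor $I_E \otimes (\widetilde{V}_{n-1} S^{(n-1)} - I_\mathcal{H})$ maps into $N(I_E \otimes S^{(n-1)})$.

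Bi-regularity enters exactly here: by Theorem~\ref{S}(4), $N(I_E \otimes S^{(n-1)}) = S(N(S^{(n)}))$, so for each $h \in \mathcal{H}$ one can write $[I_E \otimes (\widetilde{V}_{n-1} S^{(n-1)} - I_\mathcal{H})] S h = S\eta$ for some $\eta \in N(S^{(n)})$. A short application of $S\widetilde{V} S = S$ then collapses
\[
S^{(n)} \widetilde{V} S \eta \;=\; (I_E \otimes S^{(n-1)}) S \widetilde{V} S \eta \;=\; (I_E \otimes S^{(n-1)}) S \eta \;=\; S^{(n)} \eta \;=\; 0,
\]
so the first summand also lies in $N(S^{(n)})$ and the induction closes. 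I expect the only genuine obstacle to be spotting the right decomposition and recognizing that the bi-regularity identity $N(I_E \otimes S^{(n-1)}) = S(N(S^{(n)}))$ is precisely the bridge that lets one transfer the conclusion past the outer $\widetilde{V}$; once that is in hand, the remaining manipulation is routine.
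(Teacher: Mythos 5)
Your proposal is correct: the splitting $\widetilde{V}_n S^{(n)}-I_{\mathcal H}=\widetilde{V}\bigl[I_E\otimes(\widetilde{V}_{n-1}S^{(n-1)}-I_{\mathcal H})\bigr]S+(\widetilde{V}S-I_{\mathcal H})$ is an exact identity, the second summand has range in $N(S)\subseteq N(S^{(n)})$, the inductive hypothesis pushes the first summand's argument into $N(I_E\otimes S^{(n-1)})$, and condition (4) of Theorem~\ref{S} (with $m=1$) plus the collapse $S^{(n)}\widetilde{V}S\eta=(I_E\otimes S^{(n-1)})S\widetilde{V}S\eta=S^{(n)}\eta=0$ finishes the step; the appeal to Theorem~\ref{3.6} for $\widetilde{V}_nS^{(n)}\widetilde{V}_n=\widetilde{V}_n$ matches the paper. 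The paper, however, does not induct on $n$: it expands
\begin{align*}
S^{(n)}\widetilde{V}_nS^{(n)}-S^{(n)}=\sum_{k=1}^{n}S^{(n)}\widetilde{V}_{n-k}\bigl((I_{E^{\otimes n-k}}\otimes\widetilde{V}S)-I_{E^{\otimes n-k}\otimes\mathcal H}\bigr)S^{(n-k)}
\end{align*}
as a telescoping sum and kills each term by first proving $\widetilde{V}\bigl(N(I_E\otimes S^{(n)})\bigr)\subseteq N(S^{(n+1)})$ from the inclusion form of bi-regularity, $N(I_E\otimes S^{(n)})\subseteq R(S)$ (condition (2) of Theorem~\ref{S}), via the same $S\widetilde{V}S=S$ trick you use, and then iterating this to $\widetilde{V}_{k}\bigl(N(I_{E^{\otimes k}}\otimes S)\bigr)\subseteq N(S^{(k+1)})$ so that the multi-fold factor $\widetilde{V}_{n-k}$ can be crossed. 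So the underlying mechanism is the same (bi-regularity transfers kernels of the $S$-type maps past $\widetilde{V}$, and $S\widetilde{V}S=S$ closes the loop), but your organization is leaner: the induction only ever needs the one-step transfer past a single $\widetilde{V}$ and uses the equality $N(I_E\otimes S^{(n-1)})=S\bigl(N(S^{(n)})\bigr)$ directly, dispensing with the iterated inclusion; the paper's telescoping version, in exchange, is non-recursive and displays explicitly which correction term each auxiliary inclusion annihilates. Since conditions (2) and (4) are equivalent by Theorem~\ref{S}, the difference is one of bookkeeping rather than substance, and your proof is a valid, slightly more economical alternative.
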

\begin{proof}
	Since $S$ is a generalized inverse of $\widetilde{V},$
	$
	\widetilde{V}S\widetilde{V}=\widetilde{V}$ and $ S\widetilde{V}S=S.
	$
	Now $(\sigma,V)$ is bi-regular therefore by Theorem \ref{3.6}, we can see that $
	\widetilde{V}_{n}S^{(n)}\widetilde{V}_{n}= \widetilde{V}_{n}$ for $  n \in \mathbb{N}.$ So it is sufficient to prove $
	S^{(n)}\widetilde{V}_{n}S^{(n)}=S^{(n)}
	$. For every $ k \in \mathbb{N}$, we begin by demonstrating that 
	
	\begin{align}\label{a2} \widetilde{V}_{k} (N(I_{E^{\ot k}} \ot S)) \subseteq N(S^{(k+1)}). \end{align}
	Let  $ \xi \in N(I_{E} \ot S^{(n)})\subseteq  R(S)$ for every $n \in \mathbb{N}$ (using the fact that $(\sigma,V)$ is bi-regular), there is $ h \in \mathcal{H}$ such that $ \xi = Sh $. Note that\begin{align*}S^{(n+1)} \widetilde{V}\xi = S^{(n+1)} \widetilde{V}Sh= (I_{E}\ot S^{(n)}) S\widetilde{V}Sh=(I_{E}\ot S^{(n)})Sh =(I_{E}\ot S^{(n)})\xi=  0.
	\end{align*}
	It implies
	\begin{equation}\label{a1}
	\widetilde{V}( N(I_{E} \otimes S^{(n)})) \subseteq N({S^{(n+1)}}) ,\quad  n \in \mathbb{N}.
	\end{equation}
	Using  Equation (\ref{a1}) and by Mathematical induction, we get
	(\ref{a2}).
	
	Now we show that
	$S^{(n)}\widetilde{V}_{n}S^{(n)} = S^{(n)}$ for $ n \in \mathbb{N}.$ Consider \begin{align*}
	S^{(n)}\widetilde{V}_{n}S^{(n)} - S^{(n)} &=S^{(n)}(\widetilde{V}_{n}S^{(n)}-I)=S^{(n)}\big(\sum_{k=1}^{n}\widetilde{V}_{n-k+1} S^{(n-k+1)}-\widetilde{V}_{n-k}S^{(n-k)}\big) \\&=S^{(n)}\sum_{k=1}^{n}(\widetilde{V}_{n-k}(I_{E^{\ot n-k} }\ot \widetilde{V})(I_{E^{\ot n-k}} \ot S) S^{(n-k)}-\widetilde{V}_{n-k}S^{(n-k)})\\&=\sum_{k=1}^{n}S^{(n)}\widetilde{V}_{n-k}((I_{E^{\ot n-k} }\ot \widetilde{V}S)-I_{E^{\ot n-k}\ot \mathcal{H}}) S^{(n-k)} .
	\end{align*} Note that $ ((I_{E^{\ot n-k} }\ot \widetilde{V}S)-I_{E^{\ot n-k}\ot \mathcal{H}})\xi_{n-k}=(I_{E^{\ot n-k} }\ot (\widetilde{V}S-I_{\mathcal{H}}))\xi_{n-k} \in N(I_{E^{\ot n-k}}\ot S)$ for every $ \xi_{n-k} \in {E^{\ot n-k}\ot \mathcal{H}}$, $ 1 \leq k \leq n $ and Equations (\ref{a1}) and (\ref{a2}) follows that \begin{equation*}
	\widetilde{V}_{n-k} ((I_{E^{\ot n-k} }\ot \widetilde{V}S)-I_{E^{\ot n-k}\ot \mathcal{H}})\xi_{n-k}\in N(S^{(n-k+1)})\subseteq N(S^{(n)}),
	\end{equation*} for every $ 1 \leq k \leq n $. Therefore, \begin{equation*}
	S^{(n)} \widetilde{V}_{n-k}((I_{E^{\ot n-k} }\ot \widetilde{V}S)-I_{E^{\ot n-k}\ot \mathcal{H}}) = 0.
	\end{equation*} So we deduce from the above fact that $S^{(n)}\widetilde{V}_{n}S^{(n)}= S^{(n)},$   $ n \in \mathbb{N}.$
\end{proof}
\begin{remark}
	\label{Inverse}
	Consider a regular CB-representation $(\sigma,V)$  of $E$ on $\mathcal{H}$ and let $S$ be a generalized inverse of $\widetilde{V}.$ Then \begin{align*}
	R^{\infty}({V})= \{h \in \mathcal{H}  :  \widetilde{V}_{n}S^{(n)}h =h, \quad for \quad all\quad n \in \mathbb{N}\}.
	\end{align*}
\end{remark}

Indeed, let $h \in R^{\infty}({V})$, then for each $n \in \mathbb{N}$ there is $ \xi_{n} \in {E^{\otimes n}} \ot \mathcal{H}$
such that $\widetilde{V}_{n} (\xi_{n})=h$, for every $n\in \mathbb{N} .$ Now, apply $ \widetilde{V}_{n}S^{(n)} $ in both sides, we get \begin{align*}&
\widetilde{V}_{n}S^{(n)}	\widetilde{V}_{n} \xi_{n}=\widetilde{V}_{n}S^{(n)}h.
\end{align*} Now by using Theorem \ref{3.6}, we have
$	\widetilde{V}_{n} \xi_{n}=\widetilde{V}_{n}S^{(n)}h.$ This implies $h=\widetilde{V}_{n}S^{(n)}h$, for all $ n\in \mathbb{N}.$

On the other hand, let $ h \in \mathcal{H}$ be such that $  \widetilde{V}_{n}S^{(n)}h =h $, for all $ n\in \mathbb{N}$. Then $ h \in R(\widetilde{V}_{n})$ and hence $h \in R^{\infty}({V}).$
Now we recall the definition of invariant subspace (cf. \cite{SZ08}) for the covariant representation $(\sigma, V).$
\begin{definition}	
	\begin{itemize}
		\item[$(1)$] Consider a CB-representation $(\sigma,V)$ of $E$ on a Hilbert space $\mathcal{H}$ and suppose  $\mathcal K$ is a closed subspace of $ \mathcal H.$  Then we say $\mathcal K$ is  $(\sigma, V)$-{\em invariant}   if it  is $\sigma(\mathcal B)$-invariant
		and, is invariant by each operator $V(\xi),\: \xi \in E.$  In addition, if  $ \mathcal{K}^{\bot}$ is   invariant by $V (\xi)$ for $\xi \in E,$ then we say $\mathcal{K}$ is $(\sigma,V)$-{\em reducing}.  Restricting naturally  this representation we get  another representation of $E$ on $\mathcal{K}$ which will be denoted as $(\sigma, V)|_{\mathcal{K}}.$ 
		
		\item[$(2)$]  A closed  subspace $\mathcal{W}$ of $\mathcal{H}$ is called {\em wandering} subspace  for $(\sigma , V)$, if it is $\sigma(\mathcal{B})$-invariant and $\mathcal{W}\perp\wt{V}_{n}(E^{\ot n} \ot \mathcal{W})$ for every $n \in \mathbb{N}.$ The representation $(\sigma, V)$ has {\em generating wandering subspace property} (GWS-property) if there is a wandering subspace $\mathcal{W}$ of $\mathcal{H}$ satisfying
		$$\mathcal{H}=\displaystyle\bigvee_{n\in \mathbb{N}_0}\wt{V}_{n}(E^{\ot n} \ot \mathcal{W})$$
		and the corresponding  wandering subspace $\mathcal{W}$
		is called generating wandering subspace (GWS). 
	\end{itemize}
	
\end{definition}
\begin{corollary}\label{invariant}
	Consider a regular CB-representation $(\sigma,V)$  of $E$ on $\mathcal{H}$ and let $S$ be a generalized inverse of $\widetilde{V}$. Then \begin{align*}
	S (R^{\infty}({{V}})) \subseteq E\ot R^{\infty}({{V}}).
	\end{align*}
\end{corollary}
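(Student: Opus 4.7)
The plan is to pick $h \in R^{\infty}(V)$ and exhibit $Sh$ as a sum of two elements of $E \otimes R^{\infty}(V)$. The first summand will come from the fact that $h$ lies in the algebraic core, and the second will come from the kernel of $\widetilde{V}$, which regularity forces into $E \otimes R^{\infty}(V)$.

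More concretely, I would proceed as follows. Fix $h \in R^{\infty}(V)$. By Theorem \ref{reg}, regularity gives
\[
R^{\infty}(V) = C(\widetilde{V}) = \widetilde{V}(E \otimes R^{\infty}(V)),
\]
so there exists $\xi \in E \otimes R^{\infty}(V)$ with $\widetilde{V}\xi = h$. Next I would apply $\widetilde{V}$ to $Sh - \xi$ and use the defining identity $\widetilde{V} S \widetilde{V} = \widetilde{V}$ of the generalized inverse:
\[
\widetilde{V}(Sh - \xi) = \widetilde{V} S \widetilde{V} \xi - \widetilde{V} \xi = \widetilde{V}\xi - \widetilde{V}\xi = 0,
\]
so $Sh - \xi \in N(\widetilde{V})$.

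To close the argument I would invoke the remark immediately following Theorem \ref{cc}, which says that for a regular CB-representation one has $N(\widetilde{V}) \subseteq E \otimes R^{\infty}(V)$ (this is the $n=1$ case of the containment $N(\widetilde{V}_n) \subseteq E^{\otimes n} \otimes R^{\infty}(V)$ derived there from condition (3) of Theorem \ref{cc}). Thus $Sh - \xi \in E \otimes R^{\infty}(V)$, and since $\xi \in E \otimes R^{\infty}(V)$ as well, we conclude
\[
Sh = \xi + (Sh - \xi) \in E \otimes R^{\infty}(V),
\]
as required.

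The main obstacle I anticipate is justifying that the representative $\xi$ may be chosen in $E \otimes R^{\infty}(V)$ rather than merely in $E \otimes \mathcal{H}$; this is exactly the content of Theorem \ref{reg}, so invoking regularity there is essential. The rest is a one-line computation using $\widetilde{V} S \widetilde{V} = \widetilde{V}$ together with the kernel-into-generalized-range inclusion that already follows from the equivalent formulations of regularity in Theorem \ref{cc}.
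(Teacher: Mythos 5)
Your proof is correct, and it proves the same containment by the same underlying mechanism as the paper -- write $Sh$ as a ``range'' piece plus an element of $N(\widetilde{V})$ and let regularity force the kernel piece into $E \otimes R^{\infty}(V)$ -- but the route differs in which established results carry the load. The paper does not pick a preimage of $h$ inside $E \otimes R^{\infty}(V)$: it uses Remark \ref{Inverse} to get $\widetilde{V}Sh=h$, then for \emph{each} $n$ writes $h=\widetilde{V}_{n+1}(\xi_{n+1})$ with $\xi_{n+1}\in E^{\otimes(n+1)}\otimes\mathcal{H}$, observes that $Sh-(I_E\otimes\widetilde{V}_n)\xi_{n+1}\in N(\widetilde{V})\subseteq (I_E\otimes\widetilde{V}_n)(E^{\otimes(n+1)}\otimes\mathcal{H})$ by Theorem \ref{cc}(1), concludes $Sh\in E\otimes R(\widetilde{V}_n)$ for every $n$, and only at the end intersects over $n$ to land in $E\otimes R^{\infty}(V)$. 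You instead invoke Theorem \ref{reg} once to choose a single $\xi\in E\otimes R^{\infty}(V)$ with $\widetilde{V}\xi=h$, get $Sh-\xi\in N(\widetilde{V})$ from the one-line identity $\widetilde{V}S\widetilde{V}=\widetilde{V}$ (so you never need Remark \ref{Inverse}), and close with the remark following Theorem \ref{cc} that regularity gives $N(\widetilde{V})\subseteq E\otimes R^{\infty}(V)$. The trade-off: your argument is $n$-free and shorter at the corollary level, because the intersection bookkeeping has already been paid for in Theorem \ref{reg} and in that remark, whereas the paper re-runs the $n$-indexed argument from the weaker hypotheses of Theorem \ref{cc} directly; both are legitimate within the paper's framework.
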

\begin{proof}
	Let $h \in R^{\infty}({{V}}),$ then by Remark \ref{Inverse}, $\widetilde{V}Sh=h $ and thus there exist  $\xi_n \in E^{\ot n }  \ot \mathcal{H}, n \in \mathbb{N}, $ such that \begin{align*}
	\widetilde{V}Sh = \widetilde{V}_{n+1}(\xi_{n+1})=\widetilde{V}(I_{E} \ot \widetilde{V}_{n})(\xi_{n+1}) ,
	\end{align*}  which implies that \begin{align*}
	Sh-(I_{E} \ot \widetilde{V}_{n})(\xi_{n+1}) \in N(\widetilde{V}) \subseteq (I_{E} \otimes \widetilde{V}_n)(E^{\otimes(n+1) }\otimes \mathcal{H}),
	\end{align*} here the last inequality follows from Theorem \ref{cc}. Let $\eta:= Sh-(I_{E} \ot \widetilde{V}_{n})(\xi_{n+1}).$ Therefore, \begin{align*}
	Sh=	\eta +(I_{E} \ot \widetilde{V}_{n})(\xi_{n+1}) \in (I_{E} \otimes \widetilde{V}_n)(E^{\otimes(n+1) }\otimes \mathcal{H}), \quad  n \in \mathbb{N}.
	\end{align*}  Hence $ Sh \in E \ot R(\widetilde{V}_{n}) $  for every $ n \in \mathbb{N}$,  $ Sh \in E \ot  R^{\infty}({{V}}) $. Thus $
	S (R^{\infty}({{V}})) \subseteq E\ot R^{\infty}({{V}}).
	$ \end{proof}
\section{ The Moore-Penrose inverse and the reduced minimum modulus}\label{section 5}
We begin this section by defining the Moore-Penrose inverse for $\widetilde{V}$ and brief details of the Moore-Penrose inverse (for more details see \cite{D19}).

Consider a CB-representation $(\sigma,V)$  of $E$ on $\mathcal{H}$  such that $\wV$ has closed range. If there is a unique operator $\widetilde{V}^\dagger \in \mathcal{B}(\mathcal{H},E \ot \mathcal{H})$ so that \begin{enumerate}
	\item $N({\widetilde{V}^\dagger}) = R({\widetilde{V}})^\perp=N({\widetilde{V}^*})$ and
	\item $\widetilde{V}^\dagger \widetilde{V}\xi=\xi$, \quad for $\xi \in N({\widetilde{V}})^\perp,$
\end{enumerate}
then the operator $ \widetilde{V}^\dagger$ is called the {\em Moore-Penrose inverse} (MPI) of $\widetilde{V}$ and it is satisfies the following: \begin{equation*}
\widetilde{V}=	\widetilde{V}\widetilde{V}^{\dagger}\widetilde{V},\quad \widetilde{V}^{\dagger}=\widetilde{V}^{\dagger}\widetilde{V}\widetilde{V}^{\dagger},\quad \widetilde{V}\widetilde{V}^{\dagger}=(\widetilde{V}\widetilde{V}^{\dagger})^*,\quad \widetilde{V}^{\dagger}\widetilde{V}=( \widetilde{V}^{\dagger}\widetilde{V})^*.
\end{equation*}

\begin{remark}
	Observe that 
	$\widetilde{V}^\dagger =  ({\widetilde{V}^*\widetilde{V}})^{-1}\widetilde{V}^*$ when 	$\widetilde{V}$ has left inverse and if $\widetilde{V}$ has right inverse, then	$\widetilde{V}^\dagger$ will be right inverse and equal to  $\widetilde{V}^*({\widetilde{V}\widetilde{V}^*})^{-1}.$
\end{remark}
In the following definition, we introduce a notion of reduced minimum modulus (cf. \cite{A07,A85}) for the CB-representations of $E$ on $\mathcal{H}$:

\begin{definition}
	Consider a CB-representation $(\sigma,V)$ of $E$ on $\mathcal{H}.$ The {\em  reduced minimum modulus} $\gamma({\widetilde{V}})$ of  $(\sigma,V)$ is defined by
	$$	\gamma({\widetilde{V}}):=\begin{cases}
	\inf_{\xi \notin N({\widetilde{V}})} \frac{\|\widetilde{V}\xi\|}{\dis (\xi , N({\widetilde{V}}))}  & \text{if }   \wV\neq 0 \\
	\infty  & \text{if }  \widetilde{V} = 0.
	\end{cases}$$
	
\end{definition}
It is easy to see that $\widetilde{V}$ has  closed range if and only if $\gamma({\widetilde{V}}) \textgreater 0.$

\begin{proposition}\label{5.3}
	Consider a regular CB-representation $(\sigma,V)$  of $E$ on $\mathcal{H}.$ Then for each $n \in \mathbb{N}$, \begin{equation*}
	\gamma({\widetilde{V}}_n) \geq \gamma({\widetilde{V}}) \gamma(I_{E} \ot {\widetilde{V}})  \cdots \gamma(I_{E^{\ot n-1}} \ot {\widetilde{V}}).
	\end{equation*}
\end{proposition}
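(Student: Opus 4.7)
The strategy is induction on $n$, reducing the inequality to a product formula for the reduced minimum modulus that is enabled by the regularity hypothesis. The base case $n=1$ is tautological. For the inductive step I would use the factorization $\widetilde{V}_{n+1}=\widetilde{V}_{n}(I_{E^{\otimes n}}\otimes\widetilde{V})$ recorded in the Preliminaries and reduce matters to the following general submultiplicativity property: if $A\colon X\to Y$ and $B\colon Y\to Z$ are bounded Hilbert-space operators with closed range satisfying $N(B)\subseteq R(A)$, then $\gamma(BA)\geq \gamma(B)\gamma(A)$.

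To establish this auxiliary inequality I would proceed by a direct distance estimate. Fix $x\notin N(BA)$. For each $m\in N(B)$, use $N(B)\subseteq R(A)$ to pick $z\in X$ with $Az=m$; then $BAz=Bm=0$, so $z\in N(BA)$, and hence $z+N(A)\subseteq N(BA)$. Therefore
$$\|Ax-m\|=\|A(x-z)\|\geq \gamma(A)\,\dis(x-z,N(A))\geq \gamma(A)\,\dis(x,N(BA)).$$
Taking the infimum over $m\in N(B)$ gives $\dis(Ax,N(B))\geq \gamma(A)\,\dis(x,N(BA))$, and combining with the standard bound $\|BAx\|\geq \gamma(B)\,\dis(Ax,N(B))$ yields $\|BAx\|\geq \gamma(B)\gamma(A)\,\dis(x,N(BA))$, which is the desired product inequality.

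To apply this with $B=\widetilde{V}_{n}$ and $A=I_{E^{\otimes n}}\otimes\widetilde{V}$, the required kernel-range inclusion $N(\widetilde{V}_{n})\subseteq R(I_{E^{\otimes n}}\otimes\widetilde{V})$ is precisely condition (2) of Theorem \ref{cc}, which is available because $(\sigma,V)$ is regular. The operator $I_{E^{\otimes n}}\otimes\widetilde{V}$ inherits closed range from $\widetilde{V}$, and $\widetilde{V}_{n}$ has closed range by the induction hypothesis, its $\gamma$-lower bound from the previous step being strictly positive. The resulting step $\gamma(\widetilde{V}_{n+1})\geq \gamma(\widetilde{V}_{n})\,\gamma(I_{E^{\otimes n}}\otimes\widetilde{V})$ closes the induction and produces the claimed product.

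The main obstacle is conceptual rather than computational: identifying that regularity of $(\sigma,V)$ supplies exactly the kernel-range inclusion needed for the product inequality for $\gamma$. Without regularity the product formula can fail in general, so Theorem \ref{cc} is doing the essential work; once it is in hand, the rest is bookkeeping together with the preservation of closed range under the ampliations $I_{E^{\otimes k}}\otimes(\cdot)$.
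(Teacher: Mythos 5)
Your proposal is correct and follows essentially the same route as the paper: induction on $n$ via the factorization $\widetilde{V}_{n+1}=\widetilde{V}_{n}(I_{E^{\otimes n}}\otimes\widetilde{V})$, with regularity (Theorem \ref{cc}) supplying exactly the kernel--range relation that drives the distance estimate. The only difference is cosmetic: you package the estimate as a general lemma $\gamma(BA)\geq\gamma(B)\gamma(A)$ under $N(B)\subseteq R(A)$ (condition (2)), whereas the paper carries out the same computation inline using the equality $N(\widetilde{V}_{n})=(I_{E^{\otimes n}}\otimes\widetilde{V})N(\widetilde{V}_{n+1})$ from condition (4).
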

\begin{proof}
	We prove it by  Mathematical induction. The case $ n =1 $ is trivial. Assume that \begin{equation*}
	\gamma({\widetilde{V}}_n) \geq \gamma({\widetilde{V}}) \gamma(I_{E} \ot {\widetilde{V}}) \cdots \gamma(I_{E^{\ot n-1}} \ot {\widetilde{V}}).
	\end{equation*} For $ \xi \in E^{\ot(n+1)} \ot \mathcal{H} $ and $ \eta \in N({\widetilde{V}}_{n+1}) ,$ we have \begin{align*}
	\dis( \xi , N({\widetilde{V}}_{n+1})) &= \dis( \xi- \eta , N({\widetilde{V}}_{n+1})) \leq 	\dis( \xi - \eta , N(I_{E^{\ot n}} \ot {\widetilde{V}})),
	\end{align*} as $ N(I_{E^{\ot n}} \ot {\widetilde{V}})\subseteq N({\widetilde{V}}_{n+1}), $ for every $ n \in \mathbb{N} $. By assumption $(\sigma,V)$ is regular and  by (4) of Theorem \ref{cc},  \begin{equation*}
	N({\widetilde{V}_{n}}) = (I_{E^{\ot n}} \ot {\widetilde{V}})(N(\widetilde{V}_{n+1}))
	\end{equation*} and therefore \begin{align*}
	\dis((I_{E^{\ot n}} \ot {\widetilde{V}})(\xi) , N({\widetilde{V}_{n}})) &= 	\dis((I_{E^{\ot n}} \ot {\widetilde{V}})(\xi) , (I_{E^{\ot n}} \ot {\widetilde{V}})(N(\widetilde{V}_{n+1}))) \\&= \inf_{ \eta \in N({\widetilde{V}}_{n+1})}\|(I_{E^{\ot n}} \ot {\widetilde{V}})(\xi) -(I_{E^{\ot n}} \ot {\widetilde{V}})(\eta)\| \\&= \inf_{  \eta \in N({\widetilde{V}}_{n+1})}\|(I_{E^{\ot n}} \ot {\widetilde{V}})(\xi -\eta)\| \\& \geq \gamma(I_{E^{\ot n}} \ot {\widetilde{V}}) \inf_{ \eta \in N({\widetilde{V}}_{n+1})} \dis((\xi -\eta) ,N(I_{E^{\ot n}} \ot {\widetilde{V}}))\\& \geq \gamma(I_{E^{\ot n}} \ot {\widetilde{V}})  \dis(\xi ,  N({\widetilde{V}}_{n+1})).
	\end{align*} From the above observation, we get \begin{align*}
	\|\widetilde{V}_{n+1}(\xi)\| =\|\widetilde{V}_{n}(I_{E^{\ot n}} \ot {\widetilde{V}})( \xi)\| &\geq \gamma({\widetilde{V}_{n}}) \dis((I_{E^{\ot n}} \ot {\widetilde{V}})( \xi) , N({\widetilde{V}_{n}}))\\& \geq \gamma({\widetilde{V}_{n}}) \gamma(I_{E^{\ot n}} \ot {\widetilde{V}}) \dis(\xi ,  N({\widetilde{V}}_{n+1})).
	\end{align*} Consequently, from our induction hypothesis \begin{equation*}
	\gamma({\widetilde{V}}_{n+1})\geq   \gamma({\widetilde{V}_{n}}) \gamma(I_{E^{\ot n}} \ot {\widetilde{V}})\geq  \gamma({\widetilde{V}}) \gamma(I_{E} \ot {\widetilde{V}}) \gamma(I_{E^{\ot 2}} \ot {\widetilde{V}}) \cdots \gamma(I_{E^{\ot n}} \ot {\widetilde{V}}),
	\end{equation*} which completes the proof.
\end{proof}
\begin{corollary}
	Consider a regular CB-representation $(\sigma,V)$  of $E$ on $\mathcal{H}.$ Then \begin{enumerate}
		\item $ R^{\infty}({V}) $ is closed.
		\item   $\widetilde{V}$ is left invertible if  $ R^{\infty}({V})= 0 .$
	\end{enumerate}
\end{corollary}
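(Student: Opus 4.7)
The plan is to deduce both items from Proposition \ref{5.3} together with the definition of regularity.

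For (1), regularity of $(\sigma, V)$ means that $\widetilde{V}$ has closed range, equivalently $\gamma(\widetilde{V}) > 0$. Since tensoring by an identity amplifies a map between Hilbert spaces and preserves both closedness of range and the value of the reduced minimum modulus, each $I_{E^{\otimes k}} \otimes \widetilde{V}$ also satisfies $\gamma(I_{E^{\otimes k}} \otimes \widetilde{V}) > 0$. Proposition \ref{5.3} then yields
\[
\gamma(\widetilde{V}_n) \,\geq\, \gamma(\widetilde{V})\,\gamma(I_E \otimes \widetilde{V}) \cdots \gamma(I_{E^{\otimes n-1}} \otimes \widetilde{V}) \,>\, 0
\]
for every $n \in \mathbb{N}$, so each $R(\widetilde{V}_n)$ is closed. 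Therefore $R^\infty(V) = \bigcap_{n \in \mathbb{N}} R(\widetilde{V}_n)$ is closed as an intersection of closed subspaces of $\mathcal{H}$.

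For (2), assume $R^\infty(V) = \{0\}$. The remark immediately following Theorem \ref{cc} observes that, under regularity, $N(\widetilde{V}) \subseteq E \otimes R^\infty(V)$, so our hypothesis forces $N(\widetilde{V}) = \{0\}$; thus $\widetilde{V}$ is injective. Combined with the closed range provided by regularity, $\widetilde{V}$ is bounded below, which is exactly left invertibility.

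The only step requiring care is the strict positivity of $\gamma(I_{E^{\otimes k}} \otimes \widetilde{V})$. This reduces to the Hilbert-space fact that amplification by the identity preserves closed range: the interior tensor product $E^{\otimes k} \otimes_\sigma \mathcal{H}$ is a Hilbert space, and $I_{E^{\otimes k}} \otimes \widetilde{V}$ is an amplification of $\widetilde{V}$ that intertwines the relevant $\mathcal{B}$-actions, from which $\gamma(I_{E^{\otimes k}} \otimes \widetilde{V}) = \gamma(\widetilde{V}) > 0$. Once this auxiliary point is in hand, both assertions follow by the short arguments above with essentially no further computation.
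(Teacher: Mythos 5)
Your proposal is correct and follows essentially the same route as the paper: part (1) combines closedness of $R(I_{E^{\otimes k}}\otimes\widetilde{V})$ with Proposition \ref{5.3} to get $\gamma(\widetilde{V}_n)>0$, hence each $R(\widetilde{V}_n)$ and their intersection are closed, and part (2) uses regularity ($N(\widetilde{V})\subseteq E\otimes R^{\infty}(V)=\{0\}$) plus closed range to conclude $\widetilde{V}$ is bounded below. Your extra remarks on why the amplification preserves closed range and the reduced minimum modulus only spell out a point the paper takes for granted, so no substantive difference.
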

\begin{proof}
	(1)	Since  $(\sigma,V)$ is a regular CB-representation of $E$ on $\mathcal{H}$, $R(\wV)$ is closed. Thus for each $ n\in \mathbb{N},$ we have $R(I_{E^{\ot n}}\ot \wV)$ is closed and hence  using Proposition \ref{5.3}, $\gamma({{\widetilde{V}}_n})  \textgreater 0 $. From this, we conclude that  $ R({\widetilde{V}}_{n})$ is closed for every $ n \in \mathbb{N} ,$ and therefore $ R^{\infty}({V}) $ is closed.\\(2)  Since  $ R^{\infty}({V})= 0 $ and $(\sigma,V)$ is  regular, $\widetilde{V}$ is injective and  $R(\widetilde{V})$ is closed. Therefore $\widetilde{V}$ is left invertible.
\end{proof}
The following proposition establishes the relationship between the reduced minimum modulus and MPI (see \cite[Corollary 2.3]{M13}). The proof of the following proposition is similar to the operator theory case, so we can omit it.
\begin{proposition}\label{Regular}
	Consider a  CB-representation $(\sigma,V)$  of $E$ on $\mathcal{H}$  such that $\wV$ has closed range. Then \begin{equation*}
	\|\widetilde{V}^{\dagger}\|=\frac{1}{\gamma(\widetilde{V})}.
	\end{equation*}
\end{proposition}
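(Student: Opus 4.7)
The plan is to prove the equality by establishing the two inequalities $\|\widetilde{V}^{\dagger}\| \leq 1/\gamma(\widetilde{V})$ and $\|\widetilde{V}^{\dagger}\| \geq 1/\gamma(\widetilde{V})$ separately. Since $\widetilde{V}\colon E\otimes \mathcal{H}\to \mathcal{H}$ is a bounded operator between two Hilbert spaces and the closed-range hypothesis makes $\widetilde{V}^{\dagger}$ well-defined, the arguments mirror the classical operator-theoretic proof; the covariance/module structure enters only through the definitions. The trivial case $\widetilde{V}=0$ gives $\widetilde{V}^{\dagger}=0$ and $\gamma(\widetilde{V})=\infty$, so the identity holds under the convention $1/\infty=0$; henceforth I assume $\widetilde{V}\neq 0$.

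For the upper bound, I would take an arbitrary $h \in \mathcal{H}$ and split it orthogonally as $h = h_{1}+h_{2}$ with $h_{1}\in R(\widetilde{V})$ (closed by hypothesis) and $h_{2}\in R(\widetilde{V})^{\perp}=N(\widetilde{V}^{\dagger})$, so that $\widetilde{V}^{\dagger}h=\widetilde{V}^{\dagger}h_{1}$. Writing $h_{1}=\widetilde{V}\xi$ with the unique $\xi\in N(\widetilde{V})^{\perp}$, the defining relation $\widetilde{V}^{\dagger}\widetilde{V}\xi=\xi$ gives $\widetilde{V}^{\dagger}h=\xi$; moreover $\xi\perp N(\widetilde{V})$ forces $\dis(\xi,N(\widetilde{V}))=\|\xi\|$. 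The definition of $\gamma(\widetilde{V})$ then yields
\[
\|h_{1}\|=\|\widetilde{V}\xi\|\geq \gamma(\widetilde{V})\,\dis(\xi,N(\widetilde{V}))=\gamma(\widetilde{V})\,\|\widetilde{V}^{\dagger}h\|,
\]
and since $\|h_{1}\|\leq\|h\|$ I obtain $\|\widetilde{V}^{\dagger}h\|\leq\|h\|/\gamma(\widetilde{V})$, giving the desired bound after taking supremum over $h$.

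For the reverse inequality, I would start from any $\xi\in E\otimes\mathcal{H}$ with $\xi\notin N(\widetilde{V})$ and decompose $\xi=\xi_{1}+\xi_{2}$ orthogonally with $\xi_{1}\in N(\widetilde{V})^{\perp}$ and $\xi_{2}\in N(\widetilde{V})$. Then $\widetilde{V}\xi=\widetilde{V}\xi_{1}$ and $\dis(\xi,N(\widetilde{V}))=\|\xi_{1}\|$. Applying $\widetilde{V}^{\dagger}$ and invoking $\widetilde{V}^{\dagger}\widetilde{V}\xi_{1}=\xi_{1}$ gives
\[
\|\xi_{1}\|=\|\widetilde{V}^{\dagger}\widetilde{V}\xi\|\leq \|\widetilde{V}^{\dagger}\|\,\|\widetilde{V}\xi\|,
\]
so $\|\widetilde{V}\xi\|/\dis(\xi,N(\widetilde{V}))\geq 1/\|\widetilde{V}^{\dagger}\|$. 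Taking the infimum over all admissible $\xi$ yields $\gamma(\widetilde{V})\geq 1/\|\widetilde{V}^{\dagger}\|$, completing the proof.

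There is no real obstacle in this argument; it is essentially a direct consequence of the orthogonal decompositions $E\otimes\mathcal{H}=N(\widetilde{V})\oplus N(\widetilde{V})^{\perp}$ and $\mathcal{H}=R(\widetilde{V})\oplus R(\widetilde{V})^{\perp}$ together with the two defining properties of $\widetilde{V}^{\dagger}$, namely that it inverts $\widetilde{V}$ on $N(\widetilde{V})^{\perp}$ and kills $R(\widetilde{V})^{\perp}$. The mild point worth recording is that the closedness of $R(\widetilde{V})$ is used twice: once to guarantee the decomposition $\mathcal{H}=R(\widetilde{V})\oplus R(\widetilde{V})^{\perp}$ in the upper-bound step, and once implicitly via the equivalence $\gamma(\widetilde{V})>0\iff R(\widetilde{V})$ is closed so that the reverse inequality is non-vacuous.
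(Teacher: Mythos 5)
Your proposal is correct, and it is essentially the argument the paper has in mind: the paper omits the proof of this proposition, stating only that it is ``similar to the operator theory case'' (citing Mbekhta), and your two-sided estimate via the orthogonal decompositions $E\otimes\mathcal{H}=N(\widetilde{V})\oplus N(\widetilde{V})^{\perp}$ and $\mathcal{H}=R(\widetilde{V})\oplus R(\widetilde{V})^{\perp}$ together with the defining properties of $\widetilde{V}^{\dagger}$ is exactly that standard argument transplanted to $\widetilde{V}$. The only cosmetic point is to note the trivial subcase $h\in R(\widetilde{V})^{\perp}$ (i.e.\ $\xi=0$) in the upper-bound step, where the claimed inequality holds vacuously.
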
  The following proposition explains numerous essential facts about MPI of $\widetilde{V},$ which will come in handy throughout the sequel. We omit the proof of this proposition, which is straightforward.

\begin{proposition}
	Suppose $(\sigma,V)$ is a CB-representation of $E$ on $\mathcal{H}$  such that $\wV$ has closed range. Then
	\begin{enumerate}
		\item  $\widetilde{V}\widetilde{V}^{\dagger}=P_{R({\widetilde{V}})}$ and  $\widetilde{V}^{\dagger}\widetilde{V}=P_{N({\widetilde{V})}^{\perp}}$,
		\item $ R({\widetilde{V}}^\dagger) =R({\widetilde{V}}^*)=N({\widetilde{V})}^{\perp}$,
		\item$ N(\widetilde{V}^{\dagger})=N(\widetilde{V}\widetilde{V}^{\dagger})=N({\widetilde{V}}^*)=R({\widetilde{V}})^{\perp}$,
		\item$R(\widetilde{V})=R(\widetilde{V}\widetilde{V}^{\dagger})= R({\widetilde{V}}^{\dagger*})$,
		\item$N({\widetilde{V}})=N(\widetilde{V}^\dagger\widetilde{V})=N({\widetilde{V}}^{\dagger*})$,
		\item$({\widetilde{V}}^*)^\dagger=({\widetilde{V}}^\dagger)^*$,
		\item$({\widetilde{V}}^\dagger)^\dagger=\widetilde{V}$,
		\item$\widetilde{V}^*\widetilde{V}\widetilde{V}^{\dagger}=\widetilde{V}^\dagger\widetilde{V}\widetilde{V}^*=\widetilde{V}^*.$
	\end{enumerate}
\end{proposition}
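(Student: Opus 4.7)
The plan is to derive each identity directly from the four defining Moore-Penrose relations
\begin{equation*}
\wV=\wV\wV^{\dagger}\wV,\quad \wV^{\dagger}=\wV^{\dagger}\wV\wV^{\dagger},\quad (\wV\wV^{\dagger})^{*}=\wV\wV^{\dagger},\quad (\wV^{\dagger}\wV)^{*}=\wV^{\dagger}\wV,
\end{equation*}
together with the characterization $N(\wV^\dagger)=R(\wV)^\perp=N(\wV^*)$ and $\wV^\dagger\wV\xi=\xi$ for $\xi\in N(\wV)^\perp$ built into the definition. Because $\wV$ has closed range, we have $R(\wV^*)=N(\wV)^\perp$ and $R(\wV)=N(\wV^*)^\perp$, which will be used freely.

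\textbf{Steps for (1)--(5):} For (1), note that $\wV\wV^\dagger$ is self-adjoint by the third defining relation, and idempotent since $(\wV\wV^\dagger)(\wV\wV^\dagger)=\wV(\wV^\dagger\wV\wV^\dagger)=\wV\wV^\dagger$; hence it is an orthogonal projection. Its range lies in $R(\wV)$, and for $y=\wV x\in R(\wV)$ one has $\wV\wV^\dagger y=(\wV\wV^\dagger\wV)x=\wV x=y$, so $\wV\wV^\dagger=P_{R(\wV)}$. The identity $\wV^\dagger\wV=P_{N(\wV)^\perp}$ follows symmetrically using the fourth relation and the definitional clause $\wV^\dagger\wV\xi=\xi$ on $N(\wV)^\perp$. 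For (2), one direction comes from $R(\wV^\dagger)=R(\wV^\dagger\wV\wV^\dagger)\subseteq R(\wV^\dagger\wV)=N(\wV)^\perp$, and the reverse from $N(\wV)^\perp=R(\wV^\dagger\wV)\subseteq R(\wV^\dagger)$; the equality with $R(\wV^*)$ follows from closedness of the range. For (3), $N(\wV^\dagger)=R(\wV)^\perp=N(\wV^*)$ is built into the definition, and $N(\wV\wV^\dagger)=N(\wV^\dagger)$ because $\wV\wV^\dagger$ is the projection onto $R(\wV)$. For (4), $R(\wV)=R(\wV\wV^\dagger\wV)\subseteq R(\wV\wV^\dagger)\subseteq R(\wV)$, and $R(\wV)=N(\wV^*)^\perp=N(\wV^\dagger)^\perp=\overline{R(\wV^{\dagger *})}=R(\wV^{\dagger*})$ using closed range. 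For (5), $N(\wV^\dagger\wV)=N(\wV)$ because $\wV^\dagger\wV$ is the projection onto $N(\wV)^\perp$, and $N(\wV^{\dagger *})=R(\wV^\dagger)^\perp=(N(\wV)^\perp)^\perp=N(\wV)$ by (2).

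\textbf{Steps for (6)--(8):} For (6) and (7), I would verify directly that the candidate is the unique operator satisfying the four Moore-Penrose identities. For (6), take $A=(\wV^\dagger)^*$ as a candidate for $(\wV^*)^\dagger$; taking adjoints of the four relations immediately yields $\wV^*A\wV^*=\wV^*$, $A\wV^*A=A$, $(\wV^*A)^*=\wV^*A$, $(A\wV^*)^*=A\wV^*$, giving $A=(\wV^*)^\dagger$ by uniqueness. For (7), the four relations are manifestly symmetric under interchange of $\wV$ and $\wV^\dagger$, so $\wV$ is the Moore-Penrose inverse of $\wV^\dagger$. For (8), use self-adjointness of $\wV\wV^\dagger$:
\begin{equation*}
\wV^*\wV\wV^\dagger=\wV^*(\wV\wV^\dagger)^*=(\wV\wV^\dagger\wV)^*=\wV^*,
\end{equation*}
and similarly $\wV^\dagger\wV\wV^*=(\wV\wV^\dagger\wV)^*=\wV^*$ via self-adjointness of $\wV^\dagger\wV$.

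Each item reduces to a one- or two-line manipulation of the four Moore-Penrose identities, with the closed-range assumption entering only through the identification $\overline{R(\wV^*)}=R(\wV^*)$ in (2) and (4). There is no serious obstacle; the only point that needs slight care is keeping track of which projection (onto $R(\wV)$ versus onto $N(\wV)^\perp$) appears on which side, so in the write-up I would prove (1) first and then cite it when computing ranges and kernels in (2)--(5).
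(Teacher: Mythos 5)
Your proof is correct: each item follows, as you show, from the four Moore--Penrose identities together with the definitional clauses $N(\widetilde{V}^{\dagger})=R(\widetilde{V})^{\perp}$ and $\widetilde{V}^{\dagger}\widetilde{V}\xi=\xi$ on $N(\widetilde{V})^{\perp}$, with the closed-range hypothesis used only to identify $R(\widetilde{V}^{*})=N(\widetilde{V})^{\perp}$ and to drop closures. The paper omits the proof precisely because it is this standard operator-theoretic verification, so your argument is exactly the one the authors intend.
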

Consider a  CB-representation $(\sigma,V)$  of $E$ on $\mathcal{H}$ with $\widetilde{V}$ has closed range. For $n \in \mathbb{N},$ define
\begin{align*}
\widetilde{V}^{\dagger(n)}:= (I_{E^{\otimes n-1}}\ot \widetilde{V}^{\dagger})(I_{E^{\otimes n-2}}\ot\widetilde{V}^{\dagger}) \cdots(I_{E}\ot \widetilde{V}^{\dagger})\widetilde{V}^{\dagger}.
\end{align*}
Note that  the equality $\widetilde{V}^{\dagger(n)}=\widetilde{V}_n^{\dagger}, n \geq 2,$ is not true even if  $\wt{V}$ is left invertible (cf. \cite[Example 4]{EMZ21}), where $\widetilde{V}_n^{\dagger}$ is MPI of $\wt{V}_n.$ Moreover, if $\wt{V}$ is left invertible, so is $\wt{V}_n.$  Thus  $$\widetilde{V}^{\dagger(n)}=(I_{E^{\otimes n-1}}\ot (\wt{V}^*\wt{V})^{-1}\wt{V}^*)(I_{E^{\otimes n-2}}\ot(\wt{V}^*\wt{V})^{-1}\wt{V}^*) \cdots(I_{E}\ot (\wt{V}^*\wt{V})^{-1}\wt{V}^*)(\wt{V}^*\wt{V})^{-1}\wt{V}^*,$$
but $\wt{V}^{\dagger}_n=(\wt{V}_n^*\wt{V}_n)^{-1}\wt{V}_n^*.$ For this purpose, we define the following:
For $n \in \mathbb{N},$ the covariant representation $(\sigma, V)$ is said  to be  {\em $n$-dagger} if $\widetilde{V}^{\dagger(n)}=\widetilde{V}_n^{\dagger},$ and is said to be  {\em hyper-dagger} if it is  $n$-dagger for all $n \in \mathbb{N}.$

\begin{proposition}\label{invert}
	Consider a regular CB-representation $(\sigma,V)$  of $E$ on $\mathcal{H}$. If $R({\widetilde{V}^{\dagger({n})}}) \subseteq N({\widetilde{V}_{n}})^{\perp}, n \geq 2 ,$ then the restriction map  \begin{align*}
	{\widetilde{V}}_n|_{({{E^{\otimes{n}} \otimes R^{\infty}{({V})})\cap R({\widetilde{V}^{\dagger({n})})}}}}:{({{E^{\otimes{n}} \otimes R^{\infty}{({V})})\cap R({\widetilde{V}^{\dagger({n})})}}}}\rightarrow R^\infty{({V})}
	\end{align*} is bijective.  In particular, \begin{align*}
	\widetilde{V}|_{({{E \otimes R^{\infty}{({V})})}\cap N({\widetilde{V}})}^\perp}:{({{E \otimes R^{\infty}{({V})})}\cap N({\widetilde{V}})}^\perp}\rightarrow R^\infty{({V})}\end{align*} is bijective.
\end{proposition}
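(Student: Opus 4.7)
The plan is to show surjectivity by producing an explicit preimage in terms of the Moore--Penrose iterate $\widetilde{V}^{\dagger(n)}$, and then to deduce injectivity from the hypothesis $R(\widetilde{V}^{\dagger(n)}) \subseteq N(\widetilde{V}_n)^{\perp}$. The key observation is that the Moore--Penrose inverse $\widetilde{V}^{\dagger}$ is itself a generalized inverse of $\widetilde{V}$ (since $\widetilde{V}\widetilde{V}^{\dagger}\widetilde{V} = \widetilde{V}$ and $\widetilde{V}^{\dagger}\widetilde{V}\widetilde{V}^{\dagger} = \widetilde{V}^{\dagger}$), so all results from Section \ref{section 4} (in particular Theorem \ref{3.6}, Remark \ref{Inverse}, and Corollary \ref{invariant}) apply with $S = \widetilde{V}^{\dagger}$ and $S^{(n)} = \widetilde{V}^{\dagger(n)}$.

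For surjectivity, given $h \in R^{\infty}(V)$ I would set $\xi := \widetilde{V}^{\dagger(n)} h$ and verify the three required properties. First, $\xi \in R(\widetilde{V}^{\dagger(n)})$ by construction. Second, $\widetilde{V}_n(\xi) = \widetilde{V}_n \widetilde{V}^{\dagger(n)} h = h$, which is precisely the content of Remark \ref{Inverse} applied to the generalized inverse $\widetilde{V}^{\dagger}$ and the element $h \in R^{\infty}(V)$. Third, that $\xi \in E^{\otimes n} \otimes R^{\infty}(V)$ follows by an inductive application of Corollary \ref{invariant}: starting with $h \in R^{\infty}(V)$, one iteration gives $\widetilde{V}^{\dagger} h \in E \otimes R^{\infty}(V)$, and applying $I_{E^{\otimes k}} \otimes \widetilde{V}^{\dagger}$ at each subsequent step keeps the image inside $E^{\otimes k+1} \otimes R^{\infty}(V)$, yielding $\widetilde{V}^{\dagger(n)} h \in E^{\otimes n} \otimes R^{\infty}(V)$ after $n$ steps.

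For injectivity, suppose $\xi \in (E^{\otimes n} \otimes R^{\infty}(V)) \cap R(\widetilde{V}^{\dagger(n)})$ satisfies $\widetilde{V}_n(\xi) = 0$. Then $\xi \in N(\widetilde{V}_n)$, but also $\xi \in R(\widetilde{V}^{\dagger(n)}) \subseteq N(\widetilde{V}_n)^{\perp}$ by the standing hypothesis, so $\xi \in N(\widetilde{V}_n) \cap N(\widetilde{V}_n)^{\perp} = \{0\}$. This disposes of injectivity immediately, and I expect this to be the cheap part of the argument.

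For the ``In particular'' statement with $n=1$, one observes that $\widetilde{V}^{\dagger(1)} = \widetilde{V}^{\dagger}$ and by the properties of the Moore--Penrose inverse listed in the preceding proposition, $R(\widetilde{V}^{\dagger}) = N(\widetilde{V})^{\perp}$, so the hypothesis $R(\widetilde{V}^{\dagger(n)}) \subseteq N(\widetilde{V}_n)^{\perp}$ becomes automatic, and the first assertion directly specializes to the second. The main obstacle in the proof is really conceptual rather than technical: one has to recognize that although $\widetilde{V}^{\dagger(n)}$ need not equal $\widetilde{V}_n^{\dagger}$ in general (as the authors note after defining the hyper-dagger notion), it still behaves as a genuine generalized inverse of $\widetilde{V}_n$ on the algebraic core $R^{\infty}(V)$, which is exactly what is needed to pin down a canonical preimage.
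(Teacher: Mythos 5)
Your proposal is correct and follows essentially the same route as the paper's proof: surjectivity from $h=\widetilde{V}_n\widetilde{V}^{\dagger(n)}h$ (Remark \ref{Inverse}) combined with the $\widetilde{V}^{\dagger}$-invariance of $R^{\infty}({V})$ (Corollary \ref{invariant}), injectivity directly from the hypothesis $R(\widetilde{V}^{\dagger(n)})\subseteq N(\widetilde{V}_n)^{\perp}$, and the $n=1$ case via $R(\widetilde{V}^{\dagger})=N(\widetilde{V})^{\perp}$. Your explicit remark that $\widetilde{V}^{\dagger}$ is a generalized inverse of $\widetilde{V}$, together with the spelled-out induction giving $\widetilde{V}^{\dagger(n)}h\in E^{\otimes n}\otimes R^{\infty}({V})$, only makes explicit what the paper uses implicitly.
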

\begin{proof}
	
	Since  $(\sigma,V)$ is regular,  by Theorem \ref{reg},
	$ \widetilde{V}({E}\ot { {{R}^{\infty}{({V})}}}) = { {{R}^{\infty}{({V})}}}$ and thus $ \widetilde{V}_{n}({E^{\ot{n}}}\ot { {{R}^{\infty}{({V})}}}) = { {{R}^{\infty}{(\widetilde{V})}}},$ for $n \in \mathbb{N}.$ Also, \begin{align*}
	\widetilde{V}_{n}(({E^{\otimes{n}} \otimes R^{\infty}{({V})})\cap R({\widetilde{V}^{\dagger({n})}})}) \subseteq \widetilde{V}_{n}({E^{\otimes{n}} \otimes R^{\infty}{({V})}})=R^{\infty}{({V})}.
	\end{align*}

	For $ h \in R^{\infty}{({V})}$ and using Theorem  $\ref{Inverse}$, $ h = \widetilde{V}_{n}\widetilde{V}^{\dagger (n)} h, n \in \mathbb{N}.$ By Corollary $\ref{invariant}$, $\widetilde{V}^{\dagger}(R^{\infty}(\wV)) \subseteq E \ot R^{\infty}(\wV)$ and hence $\widetilde{V}^{\dagger (n)} (h) \in (E^{\otimes{n}}\ot R^{\infty}({{V}})) \cap R({\widetilde{V}^{\dagger({n})})}.$ Therefore  \begin{equation*}
	h = \widetilde{V}_{n}(\widetilde{V}^{\dagger (n)} h )\in \widetilde{V}_{n} ((E^{\otimes{n}}\ot R^{\infty}({{V}})) \cap R({\widetilde{V}^{\dagger({n})})}),
	\end{equation*}  which proves ${\widetilde{V}}_n|_{({E^{\otimes{n}} \otimes R^{\infty}{({V})})}\cap R({\widetilde{V}^{\dagger({n})})}}$ is onto. Now, we need to show that $	{\widetilde{V}}_n|_{({E^{\otimes{n}} \otimes R^{\infty}{({V})})}\cap R({\widetilde{V}^{\dagger({n})})}}$ is injective. Suppose $ \xi \in ({E^{\otimes{n}} \otimes R^{\infty}{({V})})\cap R({\widetilde{V}^{\dagger({n})})}}$ is such that $	{\widetilde{V}}_n{|} _{({{E^{\otimes{n}} \otimes R^{\infty}{({V})}})}\cap R({\widetilde{V}^{\dagger({n})})}}\xi=0.$ Then  \begin{equation*}
	\xi \in ({E^{\otimes{n}} \otimes R^{\infty}{({V})})\cap R({\widetilde{V}^{\dagger({n})})}}\cap N(\widetilde{V}_{n}).
	\end{equation*} By hypothesis $R({\widetilde{V}^{\dagger({n})}}) \subseteq N({\widetilde{V}_{n}})^{\perp},$ \begin{align*}
	\xi \in ({E^{\otimes{n}} \otimes R^{\infty}{({V})})\cap R({\widetilde{V}^{\dagger({n})})}}\cap N(\widetilde{V}_{n})= R({\widetilde{V}^{\dagger({n})})} \cap N(\widetilde{V}_{n}) = \{0\}.
	\end{align*}  
	This shows that  $	{\widetilde{V}}_n{|} _{({E^{\otimes{n}} \otimes R^{\infty}{({V})})}\cap R({\widetilde{V}^{\dagger({n})})}}$ is injective.
	Note that, for $n=1,$ $R({\widetilde{V}^{\dagger}}) = N({\widetilde{V}})^{\perp}.$ Hence \\$	 {\widetilde{V}}_n{|} _{({E^{\otimes{n}} \otimes R^{\infty}{({V})})}\cap R({\widetilde{V}^{\dagger({n})})}}$ is bijective, for $n \in \mathbb{N}.$
\end{proof}

\begin{proposition}\label{dagger}
	Suppose $(\sigma,V)$ is a regular CB-representation of $E$ on $\mathcal{H}.$ Then \begin{align*}
	(\widetilde{V}|_{ E \ot R^{\infty}({V})})^\dagger=\widetilde{V}^{\dagger}|_ {R^{\infty}({V})}.
	\end{align*}
\end{proposition}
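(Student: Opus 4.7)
Set $A:=\widetilde{V}\vert_{E\otimes R^{\infty}(V)}$ and $B:=\widetilde{V}^{\dagger}\vert_{R^{\infty}(V)}$. The goal is to show that $B$ satisfies the defining properties of the Moore--Penrose inverse of $A$. The plan is to verify, in order: (i) the domains and codomains are correct, (ii) $A$ has closed range, and (iii) the two characterizing conditions (equivalently, the four Penrose equations).

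First I would record the relevant structural facts already proved in the paper. By Theorem~\ref{reg}, $\widetilde{V}(E\otimes R^{\infty}(V))=R^{\infty}(V)$, so $A$ maps into $R^{\infty}(V)$ and is surjective. By the corollary following Proposition~\ref{5.3}, $R^{\infty}(V)$ is closed; hence $R(A)=R^{\infty}(V)$ is closed and $A^{\dagger}$ exists. Since $\widetilde{V}^{\dagger}$ is itself a generalized inverse of $\widetilde{V}$ (it satisfies $\widetilde{V}\widetilde{V}^{\dagger}\widetilde{V}=\widetilde{V}$ and $\widetilde{V}^{\dagger}\widetilde{V}\widetilde{V}^{\dagger}=\widetilde{V}^{\dagger}$), Corollary~\ref{invariant} gives $\widetilde{V}^{\dagger}(R^{\infty}(V))\subseteq E\otimes R^{\infty}(V)$; thus $B:R^{\infty}(V)\to E\otimes R^{\infty}(V)$ is well defined.

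Next I would verify the two characterizing properties of the Moore--Penrose inverse for $A:E\otimes R^{\infty}(V)\to R^{\infty}(V)$. For the kernel condition, $N(B)=N(\widetilde{V}^{\dagger})\cap R^{\infty}(V)=R(\widetilde{V})^{\perp}\cap R^{\infty}(V)=\{0\}$, and on the other hand $R(A)^{\perp}=(R^{\infty}(V))^{\perp}\cap R^{\infty}(V)=\{0\}$, matching $N(B)$. For the identity condition, note that regularity together with Theorem~\ref{cc} yields $N(\widetilde{V})\subseteq E\otimes R^{\infty}(V)$, so $N(A)=N(\widetilde{V})$ and the orthogonal complement of $N(A)$ inside $E\otimes R^{\infty}(V)$ is exactly $N(\widetilde{V})^{\perp}\cap(E\otimes R^{\infty}(V))$. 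For any $\xi$ in this complement, $BA\xi=\widetilde{V}^{\dagger}\widetilde{V}\xi=P_{N(\widetilde{V})^{\perp}}\xi=\xi$, which is the desired identity. By uniqueness of the Moore--Penrose inverse, $A^{\dagger}=B$.

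As a sanity check I would also verify the four Penrose equations directly, since the inclusion $R^{\infty}(V)\subseteq R(\widetilde{V})$ gives $AB=\widetilde{V}\widetilde{V}^{\dagger}\vert_{R^{\infty}(V)}=I_{R^{\infty}(V)}$, which is trivially self-adjoint and forces $ABA=A$ and $BAB=B$; and the invariance $N(\widetilde{V})\subseteq E\otimes R^{\infty}(V)$ ensures that the projection $BA=P_{N(\widetilde{V})^{\perp}}$ leaves $E\otimes R^{\infty}(V)$ invariant, so its restriction is a genuine orthogonal projection on $E\otimes R^{\infty}(V)$ and therefore self-adjoint. The main conceptual obstacle, and the only place the regularity hypothesis is essential, is precisely this last point: one needs $N(\widetilde{V})\subseteq E\otimes R^{\infty}(V)$ so that $BA$ preserves $E\otimes R^{\infty}(V)$ and its restriction is self-adjoint; without regularity the candidate $B$ need not even take values in $E\otimes R^{\infty}(V)$, and the Penrose identity $BA=(BA)^{*}$ on the restricted space can fail.
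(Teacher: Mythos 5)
Your proposal is correct. It rests on exactly the three structural facts the paper uses --- $R(\widetilde{V}\vert_{E\otimes R^{\infty}(V)})=R^{\infty}(V)$ is closed (Theorem \ref{reg} and the corollary to Proposition \ref{5.3}), $N(\widetilde{V})\subseteq E\otimes R^{\infty}(V)$ so that $N(A)=N(\widetilde{V})$ (the remark after Theorem \ref{cc}), and $\widetilde{V}^{\dagger}(R^{\infty}(V))\subseteq E\otimes R^{\infty}(V)$ (Corollary \ref{invariant}) --- but the route is genuinely different in presentation: you verify that the candidate $B=\widetilde{V}^{\dagger}\vert_{R^{\infty}(V)}$ satisfies the two defining conditions (equivalently the Penrose identities, with $AB=I_{R^{\infty}(V)}$ and $BA$ the orthogonal projection of $E\otimes R^{\infty}(V)$ onto $N(\widetilde{V})^{\perp}\cap(E\otimes R^{\infty}(V))$) and then invoke uniqueness of the Moore--Penrose inverse, whereas the paper computes both sides explicitly from the canonical formulas $A^{\dagger}=A_{0}^{-1}P_{R^{\infty}(V)}$ and $\widetilde{V}^{\dagger}=\widetilde{V}_{0}^{-1}P_{R(\widetilde{V})}$, where $A_{0}$ and $\widetilde{V}_{0}$ are the bijective restrictions to the orthocomplements of the kernels, and shows they agree vector by vector on $R^{\infty}(V)$. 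The paper's argument is shorter and sidesteps any self-adjointness check; yours makes visible exactly where each hypothesis enters (in particular that regularity is what forces $BA$ to leave $E\otimes R^{\infty}(V)$ invariant), and the by-products $AB=I_{R^{\infty}(V)}$ and the identification of $BA$ as a relative projection are precisely the facts exploited again later (e.g.\ in Theorem \ref{TT}). One cosmetic caveat: you should state once that you regard $A$ as an operator into the Hilbert space $R^{\infty}(V)$ (legitimate since that subspace is closed), since the paper is less explicit about the codomain; with that convention fixed, your computation $R(A)^{\perp}=\{0\}=N(B)$ and the identity $BA\xi=\xi$ on $N(A)^{\perp}\cap(E\otimes R^{\infty}(V))$ are exactly the two conditions in the paper's definition, and uniqueness finishes the proof.
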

\begin{proof}
	Since $(\sigma,V)$ is a regular CB-representation of $E$ on $\mathcal{H}$, $ N (\widetilde{V}|_{E \ot R^{\infty}({V})})= N (\widetilde{V}). $ Let $ A = \widetilde{V}|_{E \ot R^{\infty}({V})}$, then $ N(A) = N(\widetilde{V}) $ and by Theorem \ref{reg}, $ R (A) = R^{\infty}({V})$. An operator $A_0=A|_{{N(A)}^\perp} : {N(A)}^\perp \rightarrow R(A) $ is bijection and MPI of $ A ,$  $ A^\dagger ={ A_0}^{-1} P_{R^{\infty}({V})}.$ Since $ \widetilde{V}^\dagger ={ \widetilde{V}_0}^{-1} P_{R(\widetilde{V})} $ where $ \widetilde{V}_{0}=\widetilde{V}|_{N ({\widetilde{V}})^\perp} : N ({\widetilde{V}})^\perp \rightarrow R(\widetilde{V}) $, thus $ \widetilde{V}^\dagger |_{ R^{\infty}({V}) } ={ \widetilde{V}_0}^{-1} P_{R^{\infty}({V})} $.

	Let $ h \in R^{\infty}({V})$ then $ \widetilde{V}^\dagger | _{R^{\infty}({V})}h = {\widetilde{V}_0}^{-1} h $ and there exists $ \xi \in E \ot  R^{\infty}({V}) \cap N (\widetilde{V})^{\perp} $ such that $ \widetilde{V}_0^{{-1}} h = \xi $. Now, consider $ \eta = A^{\dagger}h = A_0^{-1} h $ then $ \widetilde{V}_0 \xi= A_0 \eta $, since $ \widetilde{V}_0 | _{ E \ot R^{\infty}({V}) }=A_{0 } $ and $\widetilde{V}_0 $ is bijective,  we deduce that $\xi=\eta$. Therefore $\widetilde{V}^{\dagger}| _{R^{\infty}({V})}=A^{\dagger}, $   $	(\widetilde{V}|_{ E \ot R^{\infty}({V})})^\dagger=\widetilde{V}^{\dagger}|_{ R^{\infty}({V})}$.
\end{proof}
\begin{theorem}\label{Theorem 1.7}
	Consider a  CB-representation $(\sigma,V)$  of $E$ on $\mathcal{H}$ such that $\wV$ has closed range. The following are equivalent: \begin{enumerate}
		\item $(\sigma ,V )$ is regular.
		\item  	The map $\hat{\widetilde{V}_{n}} :E^{\ot n}\ot R(\widetilde{V})^\perp \rightarrow R({\widetilde{V}}_{n})\cap R({\widetilde{V}}_{n+1})^\perp	$ defined by
		\begin{align*}
		\hat{\widetilde{V}_{n}}{\xi}= P_{ R({\widetilde{V}}_{n})\cap R({\widetilde{V}}_{n+1})^\perp} \widetilde{V}_{n}{\xi}, \:\:\:\:\xi \in E^{\ot n}\ot R(\widetilde{V})^\perp,\:\: n \in \mathbb{N}
		\end{align*}  is an invertible map.
	\end{enumerate}
\end{theorem}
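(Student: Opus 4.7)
The plan is to exploit the orthogonal decomposition $E^{\otimes n}\otimes \mathcal{H}=(E^{\otimes n}\otimes R(\widetilde{V}))\oplus(E^{\otimes n}\otimes R(\widetilde{V})^\perp)$, valid because $R(\widetilde{V})$ is closed and $\sigma(\mathcal{B})$-invariant (a quick check using $\widetilde{V}(\phi(a)\otimes I_{\mathcal{H}})=\sigma(a)\widetilde{V}$, together with the same identity for $\widetilde{V}^*$ to handle $R(\widetilde{V})^\perp$). The single key identity needed is
\[
\widetilde{V}_n(E^{\otimes n}\otimes R(\widetilde{V}))=R(\widetilde{V}_{n+1}),
\]
which is immediate from the factorization $\widetilde{V}_n(I_{E^{\otimes n}}\otimes \widetilde{V})=\widetilde{V}_{n+1}$ together with $E^{\otimes n}\otimes R(\widetilde{V})=(I_{E^{\otimes n}}\otimes\widetilde{V})(E^{\otimes(n+1)}\otimes\mathcal{H})$. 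Combined with the orthogonal decomposition $R(\widetilde{V}_n)=R(\widetilde{V}_{n+1})\oplus(R(\widetilde{V}_n)\cap R(\widetilde{V}_{n+1})^\perp)$ (where the corollary to Proposition \ref{5.3} guarantees that $R(\widetilde{V}_n)$ is closed under regularity), this lets us analyze $\hat{\widetilde{V}_n}$ quite concretely.

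For $(1)\Rightarrow(2)$, surjectivity is straightforward: given $y\in W:=R(\widetilde{V}_n)\cap R(\widetilde{V}_{n+1})^\perp$, write $y=\widetilde{V}_n\eta$ and decompose $\eta=\eta_1+\eta_2$ with $\eta_1\in E^{\otimes n}\otimes R(\widetilde{V})^\perp$ and $\eta_2\in E^{\otimes n}\otimes R(\widetilde{V})$. The key identity gives $\widetilde{V}_n\eta_2\in R(\widetilde{V}_{n+1})$, hence $\hat{\widetilde{V}_n}\eta_1=P_W\widetilde{V}_n\eta_1=P_W\widetilde{V}_n\eta-P_W\widetilde{V}_n\eta_2=P_W y=y$. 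For injectivity, suppose $\hat{\widetilde{V}_n}\xi=0$ with $\xi\in E^{\otimes n}\otimes R(\widetilde{V})^\perp$; since $\widetilde{V}_n\xi\in R(\widetilde{V}_n)$ and $P_W\widetilde{V}_n\xi=0$, we get $\widetilde{V}_n\xi\in R(\widetilde{V}_{n+1})$, so $\widetilde{V}_n\xi=\widetilde{V}_n(I_{E^{\otimes n}}\otimes\widetilde{V})\zeta$ for some $\zeta$, giving $\xi-(I_{E^{\otimes n}}\otimes\widetilde{V})\zeta\in N(\widetilde{V}_n)$. Here regularity (condition $(2)$ of Theorem \ref{cc}) kicks in: $N(\widetilde{V}_n)\subseteq (I_{E^{\otimes n}}\otimes\widetilde{V})(E^{\otimes(n+1)}\otimes\mathcal{H})=E^{\otimes n}\otimes R(\widetilde{V})$, so $\xi$ lies in both $E^{\otimes n}\otimes R(\widetilde{V})$ and $E^{\otimes n}\otimes R(\widetilde{V})^\perp$, forcing $\xi=0$.

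For $(2)\Rightarrow(1)$, I would verify condition $(2)$ of Theorem \ref{cc}. Given $\xi\in N(\widetilde{V}_n)$, decompose $\xi=\xi_1+\xi_2$ as before. The relation $\widetilde{V}_n\xi=0$ combined with $\widetilde{V}_n\xi_2\in R(\widetilde{V}_{n+1})$ yields $\widetilde{V}_n\xi_1=-\widetilde{V}_n\xi_2\in R(\widetilde{V}_{n+1})$, whence $\hat{\widetilde{V}_n}\xi_1=P_W\widetilde{V}_n\xi_1=0$. Injectivity of $\hat{\widetilde{V}_n}$ forces $\xi_1=0$, so $\xi=\xi_2\in E^{\otimes n}\otimes R(\widetilde{V})=(I_{E^{\otimes n}}\otimes\widetilde{V})(E^{\otimes(n+1)}\otimes\mathcal{H})$, which is exactly condition $(2)$ of Theorem \ref{cc} and hence, together with the hypothesis that $R(\widetilde{V})$ is closed, yields regularity.

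The main obstacle is not conceptual but a bookkeeping issue: one must carefully verify that the decomposition $E^{\otimes n}\otimes\mathcal{H}=(E^{\otimes n}\otimes R(\widetilde{V}))\oplus(E^{\otimes n}\otimes R(\widetilde{V})^\perp)$ is genuinely orthogonal with respect to the $\sigma$-balanced inner product, which relies on the $\sigma(\mathcal{B})$-invariance of both $R(\widetilde{V})$ and $R(\widetilde{V})^\perp$ so that the factor $\sigma(\langle\xi_1,\xi_2\rangle)$ appearing in $\langle\xi_1\otimes h_1,\xi_2\otimes h_2\rangle$ preserves the splitting. One also needs $R(\widetilde{V}_n)$ closed to make sense of $P_W$, which is implicit in $(2)$ and is supplied by Proposition \ref{5.3} in the forward direction.
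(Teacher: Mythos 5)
Your proposal is correct and follows essentially the same route as the paper: the injectivity argument and the $(2)\Rightarrow(1)$ direction coincide with the text's, resting on the splitting $E^{\otimes n}\otimes\mathcal{H}=(E^{\otimes n}\otimes R(\widetilde{V}))\oplus(E^{\otimes n}\otimes R(\widetilde{V})^{\perp})$, the identification $E^{\otimes n}\otimes R(\widetilde{V})=(I_{E^{\otimes n}}\otimes\widetilde{V})(E^{\otimes(n+1)}\otimes\mathcal{H})$, and regularity via Theorem \ref{cc}. The only deviation is your surjectivity step, which decomposes an arbitrary $\widetilde{V}_{n}$-preimage of $y$ directly, whereas the paper routes through the Moore--Penrose inverse $\widetilde{V}_{n}^{\dagger}$ and a short contradiction; this is a harmless (indeed slightly cleaner) simplification of the same idea.
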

\begin{proof}
	(1) $\implies $ (2): Since $(\sigma,V)$ is regular,  $ R({\widetilde{V}}_{n}) $ is closed and hence $ R({\widetilde{V}}_{n})\cap R({\widetilde{V}}_{n+1})^\perp$ is closed. Therefore $P_{ R({\widetilde{V}}_{n})\cap R({\widetilde{V}}_{n+1})^\perp}$ is continuous, so we conclude that $ \hat{\widetilde{V}_{n}} $ is well-defined bounded operator for every $n\in \mathbb{N}$.

	First we will show that $\hat{\widetilde{V}_{n}}$ is injective. Let $ \xi \in E^{\ot n}\ot R(\widetilde{V})^\perp $ be such that  $\hat{\widetilde{V}_{n}}{(\xi)}=0 ,$ i.e., $ P_{ R({\widetilde{V}}_{n})\cap R({\widetilde{V}}_{n+1})^\perp} {\widetilde{V}}_{n}{(\xi)}=0 .$ Since $ R({\widetilde{V}}_{n})=R({\widetilde{V}}_{n+1}) \oplus (R({\widetilde{V}}_{n})\cap R({\widetilde{V}}_{n+1})^\perp) $, $ {\widetilde{V}}_{n}{(\xi)} \in R({\widetilde{V}}_{n+1})$ and  there exists $ \eta \in E^{\ot {n+1}}\ot \mathcal{H} $ such that ${\widetilde{V}}_{n} (\xi)=\widetilde{V}_{n+1} (\eta) $. Using $(\sigma,V)$ is  regular, $ \xi - (I_{E^{\ot n}} \ot \widetilde{V}) \eta \in E^{\ot n} \ot R(\widetilde{V})$ but   $ \xi \in E^{\ot n}\ot R(\widetilde{V})^\perp, $ thus  $ \xi=0 $ and hence $\hat{\widetilde{V}_{n}} $ is injective.

	Now we need to show that $\hat{\widetilde{V}_{n}} $ is surjective. For $ h \in R({\widetilde{V}}_{n})\cap R({\widetilde{V}}_{n+1})^\perp $ is non-zero, $ i.e., h=P_{R({\widetilde{V}}_{n})\cap R({\widetilde{V}}_{n+1})^\perp}h$. To show that $ {\widetilde{V}}_{n}^\dagger h\notin E^{\ot n}\ot R(\widetilde{V})$. Suppose that  ${\widetilde{V}}_{n}^\dagger h = (I_{E^{\ot n}}\ot \widetilde{V})(\xi) $ for some $ \xi \in E^{\ot (n+1)} \ot \mathcal{H} $. 
	Since $ {\widetilde{V}}_{n}{\widetilde{V}}_{n}^\dagger $ is the orthogonal projection on $R({\widetilde{V}}_n) $,  $ h =  {\widetilde{V}}_{n}{\widetilde{V}}_{n}^\dagger h = {\widetilde{V}}_{n}(I_{E^{\ot n}}\ot \widetilde{V})(\xi)= \widetilde{V}_{n+1}(\xi) \in R({\widetilde{V}}_{n+1})$. Thus $ h \in R({\widetilde{V}}_{n+1})^\perp \cap R({\widetilde{V}}_{n+1})$  and hence $ h= 0$, which is  absurd due to $h \neq 0.$  Therefore ${\widetilde{V}}_{n}^\dagger h \notin E^{\ot n}\ot R(\widetilde{V})$ and ${\widetilde{V}}_{n}^\dagger h =  \eta +  (I_{E^{\ot n}}\ot \widetilde{V})(\xi)  $ for some $ \xi \in E^{\ot (n+1)} \ot \mathcal{H}, \eta \in E^{\ot n}\ot R(\widetilde{V})^\perp $. Now $ h={\widetilde{V}}_{n}{\widetilde{V}}_{n}^\dagger h  ={\widetilde{V}}_{n}(\eta) + \widetilde{V}_{n+1}(\xi) $ and $ h=P_{R({\widetilde{V}}_{n})\cap R({\widetilde{V}}_{n+1})^\perp}{\widetilde{V}}_n(\eta),$ we conclude that $h=\hat{\widetilde{V}_{n}} (\eta) $ for some $ \eta \in E^{\ot n}\ot R(\widetilde{V})^\perp .$ Therefore  $\hat{\widetilde{V}_{n}}$ is onto.

	(2) $\implies (1)$: Suppose $\hat{\widetilde{V}_{n}} :E^{\ot n}\ot R(\widetilde{V})^\perp \rightarrow R({\widetilde{V}}_{n})\cap R({\widetilde{V}}_{n+1})^\perp	$ is an invertible operator. For $ \xi\in N({\widetilde{V}}_{n}),$ then we have to show that $ \xi \in E^{\ot n}\ot R({\widetilde{V}}).$ Since $ R({\widetilde{V}}) $ is closed,  $ \xi= \eta +  (I_{E^{\ot n}}\ot \widetilde{V})(\kappa)  $ for some $ \kappa \in E^{\ot (n+1)} \ot \mathcal{H} $  and $ \eta \in E^{\ot n}\ot R(\widetilde{V})^\perp $. Then ${\widetilde{V}}_{n} (\eta)+\widetilde{V}_{n+1}(\kappa) =  {\widetilde{V}}_{n}(\eta +  (I_{E^{\ot n}}\ot \widetilde{V})(\kappa)) =  \widetilde{V}_n \xi =0 $ and hence $ 0 = P_{R({\widetilde{V}}_{n})\cap R({\widetilde{V}}_{n+1})^\perp}\widetilde{V}_{n+1}(\kappa) = -P_{R({\widetilde{V}}_{n})\cap R({\widetilde{V}}_{n+1})^\perp}{\widetilde{V}}_{n}(\eta) $, therefore $ \eta \in N (\hat{\widetilde{V}_{n}}).$ But by hypothesis, $\hat{\widetilde{V}_{n}}$ is injective,  $ \eta = 0 .$ Thus $ \xi=(I_{E^{\ot n}}\ot \widetilde{V})(\kappa) \in E^{\ot n}\ot R({\widetilde{V}}),$ which deduce that  $(\sigma , V)$ is regular. \end{proof}
\begin{corollary}
	Consider a regular CB-representation $(\sigma,V)$  of $E$ on $\mathcal{H}.$ Then \begin{equation*}
	\dim R{({\widetilde{V}}_{n})\cap R({\widetilde{V}}_{n+1})^\perp} = \dim({E^{\ot n} \ot R({\widetilde{V}})^\perp}), \:\:\: n\in \mathbb{N}.
	\end{equation*}
\end{corollary}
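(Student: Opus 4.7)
The plan is to deduce the corollary as an immediate consequence of Theorem \ref{Theorem 1.7}. That theorem says that, since $(\sigma,V)$ is regular, the bounded linear operator
\[
\hat{\widetilde{V}_{n}}:E^{\otimes n}\otimes R(\widetilde{V})^{\perp}\longrightarrow R(\widetilde{V}_{n})\cap R(\widetilde{V}_{n+1})^{\perp},\qquad \hat{\widetilde{V}_{n}}\xi=P_{R(\widetilde{V}_{n})\cap R(\widetilde{V}_{n+1})^{\perp}}\widetilde{V}_{n}\xi,
\]
is invertible for every $n\geq 1$. So the two Hilbert spaces in question are linearly isomorphic via a bounded linear bijection, and the task reduces to upgrading this to equality of Hilbert space dimensions.

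First I would record that the codomain is indeed a (closed) Hilbert subspace: regularity of $(\sigma,V)$ forces $R(\widetilde{V}_n)$ to be closed for every $n$ (by Proposition \ref{5.3} via $\gamma(\widetilde{V}_n)>0$), hence $R(\widetilde{V}_n)\cap R(\widetilde{V}_{n+1})^{\perp}$ is closed. Similarly $E^{\otimes n}\otimes R(\widetilde{V})^{\perp}$ is a closed Hilbert subspace of $E^{\otimes n}\otimes\mathcal{H}$. Thus $\hat{\widetilde{V}_{n}}$ is a bounded bijection between Hilbert spaces, and by the open mapping theorem it has a bounded inverse.

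To conclude equality of Hilbert space dimensions, I would invoke the polar decomposition $\hat{\widetilde{V}_{n}}=U|\hat{\widetilde{V}_{n}}|$, where $|\hat{\widetilde{V}_{n}}|=(\hat{\widetilde{V}_{n}}^{*}\hat{\widetilde{V}_{n}})^{1/2}$ is a positive invertible operator on $E^{\otimes n}\otimes R(\widetilde{V})^{\perp}$ and $U$ is a partial isometry. Since $\hat{\widetilde{V}_{n}}$ is a bijection, $U$ must be a unitary operator from $E^{\otimes n}\otimes R(\widetilde{V})^{\perp}$ onto $R(\widetilde{V}_{n})\cap R(\widetilde{V}_{n+1})^{\perp}$. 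A unitary carries an orthonormal basis to an orthonormal basis, so the two spaces have the same Hilbert space dimension, which gives the claimed equality.

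There is no real obstacle here: the entire content of the corollary is packaged in Theorem \ref{Theorem 1.7}, and the only ingredient beyond it is the standard fact that a bounded invertible linear map between Hilbert spaces induces, via polar decomposition, a unitary between them.
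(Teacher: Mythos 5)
Your proposal is correct and takes essentially the same approach the paper intends: the corollary is stated without proof precisely because it is an immediate consequence of the invertibility of $\hat{\widetilde{V}_{n}}$ in Theorem \ref{Theorem 1.7}. Your write-up merely makes explicit the standard facts (closedness of the ranges, open mapping theorem, and the unitary arising from the polar decomposition of a bounded bijection) showing that the linear isomorphism $E^{\otimes n}\otimes R(\widetilde{V})^{\perp}\to R(\widetilde{V}_{n})\cap R(\widetilde{V}_{n+1})^{\perp}$ forces equality of Hilbert space dimensions.
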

\section{Shimorin-Wold-type decomposition for regular CB-representation of $E$ on $\mathcal{H}$}\label{section 6}

Suppose $(\sigma,V)$ is a CB-representation of $E$ on a Hilbert space $\mathcal{H}$ such that $\wt{V}$ has  closed range. We use the notations $\mathcal{W}^{\dagger}$ and $\mathcal{W}$  throughout this section for   \begin{equation*}
\mathcal{W}^{\dagger}: = (E\ot \mathcal{H}) \ominus \widetilde{V}^\dagger({ \mathcal{H}}) = R({\widetilde{V}^\dagger})^\perp = N({\widetilde{V}}^{\dagger*})=N({\widetilde{V}}),
\end{equation*}  \begin{equation*}
\mathcal{W}: =\mathcal{H} \ominus \widetilde{V}({E \ot \mathcal{H}}) = R({\widetilde{V}})^\perp = N({\widetilde{V}}^*).
\end{equation*} 

Suppose that $\gamma(\widetilde{V})\geq 1,$ then by Proposition \ref{Regular},
$\widetilde{V}^\dagger $ is contraction and hence  \begin{equation}\label{cont}
\|\widetilde{V}^\dagger\widetilde{V}\xi\|^2\leq \|\widetilde{V}^\dagger\|^2 \|\widetilde{V}\xi\|^2\leq\|\widetilde{V}\xi\|^2,\:\:\: \xi\in E\ot \mathcal{H}.
\end{equation} 
As $\wt{V}^{\dagger}\wt{V}$ is the orthogonal projection onto $ N({\widetilde{V}})^{\perp} ,$  $ \widetilde{V}^* \widetilde{V}- \widetilde{V}^\dagger \widetilde{V}\geq 0 .$ We denote  $ D_{\widetilde{V}}$ by $ (\widetilde{V}^* \widetilde{V}- \widetilde{V}^\dagger \widetilde{V})^{\frac{1}{2}},$  i.e. $D_{\widetilde{V}}:= (\widetilde{V}^* \widetilde{V}- \widetilde{V}^\dagger \widetilde{V})^{\frac{1}{2}}.$ Clearly for every $\xi \in   E\ot \mathcal{H},$ \begin{equation*}
\|D_{\widetilde{V}}\xi\|^2= \|\widetilde{V}\xi\|^2- \|\widetilde{V}^\dagger\widetilde{V}\xi\|^2.
\end{equation*}
Consider a CB-representation $(\sigma,V)$  of $E$ on  $\mathcal{H}.$ We call it  is {\em concave} (cf. \cite{HV19}) if the following inequality \begin{equation*}
\|\widetilde{V}_{2}(\xi)\|^{2}+\|\xi\|^{2}\leq 2\|(I_{E}\ot \widetilde{V})\xi\|
\end{equation*} holds for all $\xi \in E^{\ot 2}\ot \mathcal{H}.$
If $(\sigma,V)$ is  isometric  then it is concave.

The following lemma in the article  \cite[Lemma 2.2]{HV19} provides a crucial aspect of the concave CB-representation of $E$ on $\mathcal{H}.$
\begin{lemma}\label{L1}
	Consider a concave CB-representation $(\sigma, V)$   of $E$ on  $\mathcal{H}.$ Then
	\begin{align}\label{CON}
	\|\wt{V}_k(\xi_k)\|^2 \leq \| \xi_k\|^2 + k(\|(I_{E^{\otimes (k-1)}} \otimes \wt{V})(\xi_k)\|^2-\|\xi_k\|^2), ~~\xi_{k} \in E^{\otimes k}\ot \mathcal{H}, k \in \mathbb{N}. \end{align}
\end{lemma}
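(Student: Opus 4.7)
The plan is to mirror the classical argument for a concave bounded operator $T$, where iterating $\|T^2 h\|^2 + \|h\|^2 \leq 2\|Th\|^2$ yields $\|T^k h\|^2 \leq \|h\|^2 + k(\|Th\|^2 - \|h\|^2)$; in the present setting one carries out the iteration by a telescoping argument applied to a single element $\xi_k$ after first lifting concavity from $E^{\ot 2} \ot \mathcal{H}$ to $E^{\ot k} \ot \mathcal{H}$. (The constant on the right-hand side should read $k$; the symbol $n$ in the statement appears to be a typographical artifact.)

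First I would rephrase concavity as the operator inequality $\wt{V}_2^* \wt{V}_2 + I \leq 2(I_E \ot \wt{V})^*(I_E \ot \wt{V})$ on $E^{\ot 2} \ot \mathcal{H}$. Tensoring by $I_{E^{\ot(j-1)}}$ preserves positivity, so for every $j\geq 1$ and every $\zeta \in E^{\ot(j+1)} \ot \mathcal{H}$,
\begin{equation*}
\|(I_{E^{\ot(j-1)}} \ot \wt{V}_2)\zeta\|^2 + \|\zeta\|^2 \leq 2\,\|(I_{E^{\ot j}} \ot \wt{V})\zeta\|^2.
\end{equation*}
This ``lifted concavity'' is the only nontrivial step; it needs the adjoint formulation because concavity is phrased pointwise and does not a priori tensor in.

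Second, fix $\xi_k \in E^{\ot k} \ot \mathcal{H}$ and set $a_j := \|(I_{E^{\ot j}} \ot \wt{V}_{k-j})\xi_k\|^2$ for $0 \leq j \leq k$ (with $\wt{V}_0 := I_{\mathcal{H}}$), so that $a_0 = \|\wt{V}_k(\xi_k)\|^2$, $a_{k-1} = \|(I_{E^{\ot(k-1)}} \ot \wt{V})\xi_k\|^2$ and $a_k = \|\xi_k\|^2$. The factorisation $\wt{V}_{k-j+1} = \wt{V}_2(I_E \ot \wt{V}_{k-j-1})$ rewrites
\begin{equation*}
(I_{E^{\ot(j-1)}} \ot \wt{V}_{k-j+1})\xi_k = (I_{E^{\ot(j-1)}} \ot \wt{V}_2)\,(I_{E^{\ot(j+1)}} \ot \wt{V}_{k-j-1})\xi_k,
\end{equation*}
so feeding $\zeta = (I_{E^{\ot(j+1)}} \ot \wt{V}_{k-j-1})\xi_k$ into the lifted inequality yields $a_{j-1} + a_{j+1} \leq 2 a_j$ for $1 \leq j \leq k-1$.

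Finally, this three-term relation says that the consecutive differences $d_j := a_j - a_{j+1}$ are nondecreasing in $j$, so $d_j \leq d_{k-1}$ for $0 \leq j \leq k-1$. Telescoping,
\begin{equation*}
a_0 - a_k = \sum_{j=0}^{k-1} d_j \leq k\, d_{k-1} = k\,(a_{k-1} - a_k),
\end{equation*}
which rearranges exactly to $\|\wt{V}_k(\xi_k)\|^2 \leq \|\xi_k\|^2 + k\bigl(\|(I_{E^{\ot(k-1)}} \ot \wt{V})\xi_k\|^2 - \|\xi_k\|^2\bigr)$. The expected obstacle is the lifting in the first step: one must write concavity as an operator inequality and then tensor; once that is in place the rest is a direct module-theoretic transcription of the scalar iteration.
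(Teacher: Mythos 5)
Your proof is correct and is essentially the standard concavity iteration behind this lemma, which the paper itself does not reprove but imports from \cite[Lemma 2.2]{HV19}: the lifted inequality gives $a_{j-1}+a_{j+1}\le 2a_j$, the differences are monotone, and telescoping yields the bound with constant $k$ (the symbol $n$ in the statement is indeed a typo for $k$). The only point worth making explicit in your lifting step is that $I_{E^{\ot (j-1)}}\ot T$, with $T=2(I_E\ot\wt{V})^*(I_E\ot\wt{V})-\wt{V}_2^*\wt{V}_2-I$, is well defined on the interior tensor product because $T$ commutes with the left action $\phi^2(\cdot)\ot I_{\mathcal{H}}$ (an immediate consequence of the covariance relations for $\wt{V}_2$ and $I_E\ot\wt{V}$), after which positivity indeed tensors as you claim.
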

Note that $\sum_{n=1}^{\infty}\frac{1}{n}= \infty.$ Now we can take general sequence, $(d_k)_k ,k\geq 2, $ of non-negative terms such that $\sum_{m \geq 2}$ $\frac{1}{d_m} = \infty,$ in place of the sequence $\frac{1}{n},$ which generalize  the Inequality (\ref{CON}) in more general form. At the beginning of this section, we discuss the results related to this generalization of Inequality (\ref{CON}).

The following proposition is a abstraction of \cite[Lemma 1]{R88}, \cite[Proposition 2.1]{O05}, \cite[Proposition 7]{EMZ15} and \cite[Lemma 2.2]{HV19}:
\begin{proposition}
	Consider a CB-representation $(\sigma,V)$  of $E$ on $\mathcal{H}$ so that $\gamma({\widetilde{V}}) \geq 1 .$ Suppose $ d \textgreater 0 $ and $(d_k)_k ,(k\geq 2) $ is some non-negative sequence. Then the following two inequalities are equivalent: \begin{equation}\label{eqn 2.1}
	\|\widetilde{V}_k (\xi_{k})\|^2 \leq d_k \| (I_{E^{\ot {k-1}}} \ot D_{\widetilde{V}}) (\xi_{k})\|^2 +d \|(I_{E^{\ot {k-1}}} \ot \widetilde{V}^\dagger \widetilde{V}) (\xi_{k})\|^2 
	\end{equation} for every $ \xi_{k} \in  E^{\ot k} \ot \mathcal{H},$  and \begin{equation}\label{2.2}
	\|\widetilde{V}_k (\eta_{k})\|^2 \leq d_k (\| (I_{E^{\ot {k-1}}} \ot \widetilde{V}) \eta_{k}\|^2 - \| \eta_{k}\|^2 )+ d \| \eta_{k}\|^2 ,
	\end{equation} for every $ \eta_{k} \in  E^{\ot{k-1}} \ot N({\widetilde{V}})^\perp .$
\end{proposition}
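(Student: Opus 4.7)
The plan is to pivot on two elementary facts: (a) $\widetilde{V}^{\dagger}\widetilde{V}$ is the orthogonal projection onto $N(\widetilde{V})^{\perp}$, so it acts as the identity on $N(\widetilde{V})^{\perp}$ and as $0$ on $N(\widetilde{V})$; and (b) $D_{\widetilde{V}}$ annihilates $N(\widetilde{V})$, because for $\xi\in N(\widetilde{V})$ one has $\|D_{\widetilde{V}}\xi\|^{2}=\|\widetilde{V}\xi\|^{2}-\|\widetilde{V}^{\dagger}\widetilde{V}\xi\|^{2}=0$. Tensoring with $I_{E^{\ot (k-1)}}$, both $I_{E^{\ot (k-1)}}\ot \widetilde{V}^{\dagger}\widetilde{V}$ and $I_{E^{\ot (k-1)}}\ot D_{\widetilde{V}}$ respect the orthogonal decomposition
\[
E^{\ot k}\ot \mathcal{H}\;=\;\bigl(E^{\ot (k-1)}\ot N(\widetilde{V})^{\perp}\bigr)\;\oplus\;\bigl(E^{\ot (k-1)}\ot N(\widetilde{V})\bigr),
\]
and this is the only structural input we need.

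For the implication (\ref{eqn 2.1}) $\Rightarrow$ (\ref{2.2}), I would simply restrict (\ref{eqn 2.1}) to the subspace $E^{\ot (k-1)}\ot N(\widetilde{V})^{\perp}\subseteq E^{\ot k}\ot\mathcal{H}$. For $\eta_{k}$ in this subspace, observation (a) gives $(I_{E^{\ot (k-1)}}\ot \widetilde{V}^{\dagger}\widetilde{V})\eta_{k}=\eta_{k}$, and hence
\[
\|(I_{E^{\ot (k-1)}}\ot D_{\widetilde{V}})\eta_{k}\|^{2}\;=\;\|(I_{E^{\ot (k-1)}}\ot \widetilde{V})\eta_{k}\|^{2}-\|\eta_{k}\|^{2}.
\]
Substituting these identifications into (\ref{eqn 2.1}) yields (\ref{2.2}) with the same constants $d_{k}$ and $d$.

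For the converse (\ref{2.2}) $\Rightarrow$ (\ref{eqn 2.1}), given an arbitrary $\xi_{k}\in E^{\ot k}\ot\mathcal{H}$, I would set $\eta_{k}:=(I_{E^{\ot (k-1)}}\ot\widetilde{V}^{\dagger}\widetilde{V})\xi_{k}\in E^{\ot (k-1)}\ot N(\widetilde{V})^{\perp}$ and $\zeta_{k}:=\xi_{k}-\eta_{k}\in E^{\ot (k-1)}\ot N(\widetilde{V})$. Since $\zeta_{k}$ is killed both by $I_{E^{\ot (k-1)}}\ot \widetilde{V}$ and, by (b), by $I_{E^{\ot (k-1)}}\ot D_{\widetilde{V}}$, while $\widetilde{V}_{k}=\widetilde{V}_{k-1}(I_{E^{\ot (k-1)}}\ot \widetilde{V})$, we obtain
\[
\widetilde{V}_{k}(\xi_{k})=\widetilde{V}_{k}(\eta_{k}),\qquad (I_{E^{\ot (k-1)}}\ot D_{\widetilde{V}})\xi_{k}=(I_{E^{\ot (k-1)}}\ot D_{\widetilde{V}})\eta_{k}.
\]
Applying (\ref{2.2}) to $\eta_{k}$ and rewriting $\|\eta_{k}\|^{2}=\|(I_{E^{\ot (k-1)}}\ot \widetilde{V}^{\dagger}\widetilde{V})\xi_{k}\|^{2}$ together with $\|(I_{E^{\ot (k-1)}}\ot D_{\widetilde{V}})\eta_{k}\|^{2}=\|(I_{E^{\ot (k-1)}}\ot \widetilde{V})\eta_{k}\|^{2}-\|\eta_{k}\|^{2}$ recovers exactly (\ref{eqn 2.1}) for $\xi_{k}$.

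I do not anticipate a real obstacle here; it is essentially a bookkeeping argument against the splitting $E\ot\mathcal{H}=N(\widetilde{V})^{\perp}\oplus N(\widetilde{V})$. The only prerequisites already in place are that $R(\widetilde{V})$ is closed (guaranteed by $\gamma(\widetilde{V})\geq 1>0$), so that $\widetilde{V}^{\dagger}$ exists, and that $\widetilde{V}^{*}\widetilde{V}-\widetilde{V}^{\dagger}\widetilde{V}\geq 0$, so that $D_{\widetilde{V}}$ is well-defined as a positive square root; both facts were noted immediately before the statement.
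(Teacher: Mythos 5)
Your proposal is correct and follows essentially the same route as the paper: restrict the first inequality to $E^{\ot (k-1)}\ot N(\widetilde{V})^{\perp}$ using $\widetilde{V}^{\dagger}\widetilde{V}=P_{N(\widetilde{V})^{\perp}}$ for one direction, and substitute $(I_{E^{\ot (k-1)}}\ot\widetilde{V}^{\dagger}\widetilde{V})\xi_{k}$ into the second inequality for the converse (your explicit splitting off of the $N(\widetilde{V})$ component is just the identity $\widetilde{V}\widetilde{V}^{\dagger}\widetilde{V}=\widetilde{V}$ that the paper invokes). No gaps.
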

\begin{proof}
	Due to the fact that $\gamma(\widetilde{V})$ is strictly positive, the operator $\widetilde{V}^\dagger$ exists. First, assume that the Inequality (\ref{eqn 2.1})  holds, i.e., \begin{equation*}
	\|\widetilde{V}_k (\xi_{k})\|^2 \leq d_k \| (I_{E^{\ot {k-1}}} \ot D_{\widetilde{V}}) (\xi_{k})\|^2 + d \|(I_{E^{\ot {k-1}}} \ot \widetilde{V}^\dagger \widetilde{V}) (\xi_{k})\|^2 
	\end{equation*} for every $ \xi_{k} \in  E^{\ot k} \ot \mathcal{H}.$ Therefore it also holds for every $ \eta_{k}\in E^{\ot {k-1}}\ot N({\widetilde{V}})^\perp \subseteq E^{\ot k } \ot \mathcal{H},$ hence \begin{equation*}
	\|\widetilde{V}_k (\eta_{k})\|^2 \leq d_k \| (I_{E^{\ot {k-1}}} \ot D_{\widetilde{V}}) (\eta_{k})\|^2 +d \|(I_{E^{\ot {k-1}}} \ot \widetilde{V}^\dagger \widetilde{V}) (\eta_{k})\|^2. 
	\end{equation*} Since $\widetilde{V}^{\dagger}\widetilde{V}=P_{N({\widetilde{V})}^{\perp}},$  $I_{E^{\ot {k-1}}} \ot \widetilde{V}^{\dagger}\widetilde{V}=P_{E^{\ot {k-1}} \ot {N({\widetilde{V})}^{\perp}}},$ and $(I_{E^{\ot {k-1}}} \ot \widetilde{V}^\dagger \widetilde{V})(\eta_{k}) = \eta_{k} .$ Thus  \begin{align*}
	\|\widetilde{V}_k (\eta_{k})\|^2 & \leq d_k \| (I_{E^{\ot {k-1}}} \ot (\widetilde{V}^* \widetilde{V} -  \widetilde{V}^\dagger \widetilde{V})^\frac{1}{2}) (\eta_{k})\|^2 +d \| (\eta_{k})\|^2 \\& = d_k (\| (I_{E^{\ot {k-1}}} \ot \widetilde{V}) (\eta_{k})\|^2 - \| \eta_{k}\|^2 )+d \| \eta_{k}\|^2 ,
	\end{align*} which is the required inequality.

	On the other hand, suppose that the Inequality (\ref{2.2}) holds. Let $ \xi_{k} \in E^{\ot k} \ot \mathcal{H},$ then \begin{equation*}
	(I_{E^{\ot {k-1}}} \ot \widetilde{V}^\dagger \widetilde{V})(\xi_{k}) \in  E^{\ot{k-1}} \ot N({\widetilde{V}})^\perp.
	\end{equation*} By substituting $(I_{E^{\ot {k-1}}} \ot \widetilde{V}^\dagger \widetilde{V})(\xi_{k}) \in  E^{\ot{k-1}} \ot N({\widetilde{V}})^\perp$ for $ \eta_{k} $ in  Inequality (\ref{2.2}) and using $\widetilde{V}\widetilde{V}^\dagger\widetilde{V}=\widetilde{V},$ we can easily obtain Inequality (\ref{eqn 2.1}).
\end{proof}
\begin{lemma}\label{Lemma 2.4}
	Consider a CB-representation $(\sigma,V)$  of $E$ on $\mathcal{H}$ so that $\gamma({\widetilde{V}}) \geq 1 .$  Then for $ n\in \mathbb{N},$ \begin{equation*}
	\|h\|^2 = \sum_{i=0}^{n-1} \| (I_{E^{\ot i}} \ot P_{\mathcal{W}}) \widetilde{V}^{\dagger{(i)}}h\|^2 + \|(\widetilde{V}^\dagger)^nh\|^2 + \sum_{i=1}^{n} \| (I_{E^{\ot (i-1)}} \ot D_{\widetilde{V}}) \widetilde{V}^{\dagger{(i)}}h\|^2, \quad h \in \mathcal{H}
	\end{equation*} where $ P_{\mathcal{W}}:=I_{\mathcal{H}}-\widetilde{V}\widetilde{V}^\dagger .$
\end{lemma}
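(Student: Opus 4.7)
The plan is to prove the identity by induction on $n$, using repeatedly two orthogonal decompositions: one based on the projection $P_{\mathcal{W}} = I - \widetilde{V}\widetilde{V}^{\dagger}$ onto $\mathcal{W} = R(\widetilde{V})^{\perp}$, and one based on the identity $\widetilde{V}^{*}\widetilde{V} = D_{\widetilde{V}}^{2} + \widetilde{V}^{\dagger}\widetilde{V}$ (which is just the definition of $D_{\widetilde{V}}$).

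For the base case $n=1$, I would first write $h = (I - \widetilde{V}\widetilde{V}^{\dagger})h + \widetilde{V}\widetilde{V}^{\dagger}h = P_{\mathcal{W}} h + \widetilde{V}\widetilde{V}^{\dagger}h$. Since $\widetilde{V}\widetilde{V}^{\dagger}$ and $I - \widetilde{V}\widetilde{V}^{\dagger}$ are complementary self-adjoint projections, this gives $\|h\|^{2} = \|P_{\mathcal{W}} h\|^{2} + \|\widetilde{V}\widetilde{V}^{\dagger}h\|^{2}$. Then, using $D_{\widetilde{V}}^{2} = \widetilde{V}^{*}\widetilde{V} - \widetilde{V}^{\dagger}\widetilde{V}$ applied to $\widetilde{V}^{\dagger}h \in E \otimes \mathcal{H}$ together with the Moore--Penrose identity $\widetilde{V}^{\dagger}\widetilde{V}\widetilde{V}^{\dagger} = \widetilde{V}^{\dagger}$, I obtain $\|\widetilde{V}\widetilde{V}^{\dagger}h\|^{2} = \|D_{\widetilde{V}}\widetilde{V}^{\dagger}h\|^{2} + \|\widetilde{V}^{\dagger}h\|^{2}$, matching the statement for $n=1$.

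For the inductive step, assume the formula for $n$; the key is to decompose the remainder term $\|\widetilde{V}^{\dagger(n)}h\|^{2}$. Using the same orthogonal splitting tensored by $I_{E^{\otimes n}}$, I would write
\begin{equation*}
\widetilde{V}^{\dagger(n)}h = (I_{E^{\otimes n}} \otimes P_{\mathcal{W}})\widetilde{V}^{\dagger(n)}h + (I_{E^{\otimes n}} \otimes \widetilde{V}\widetilde{V}^{\dagger})\widetilde{V}^{\dagger(n)}h,
\end{equation*}
and note that $(I_{E^{\otimes n}} \otimes \widetilde{V}\widetilde{V}^{\dagger})\widetilde{V}^{\dagger(n)}h = (I_{E^{\otimes n}} \otimes \widetilde{V})\widetilde{V}^{\dagger(n+1)}h$ by the definition \eqref{snn}-style recursion $\widetilde{V}^{\dagger(n+1)} = (I_{E^{\otimes n}} \otimes \widetilde{V}^{\dagger})\widetilde{V}^{\dagger(n)}$. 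This produces
\begin{equation*}
\|\widetilde{V}^{\dagger(n)}h\|^{2} = \|(I_{E^{\otimes n}} \otimes P_{\mathcal{W}})\widetilde{V}^{\dagger(n)}h\|^{2} + \|(I_{E^{\otimes n}} \otimes \widetilde{V})\widetilde{V}^{\dagger(n+1)}h\|^{2}.
\end{equation*}
Applying $\|(I_{E^{\otimes n}} \otimes \widetilde{V})\xi\|^{2} = \|(I_{E^{\otimes n}} \otimes D_{\widetilde{V}})\xi\|^{2} + \|(I_{E^{\otimes n}} \otimes \widetilde{V}^{\dagger}\widetilde{V})\xi\|^{2}$ to $\xi = \widetilde{V}^{\dagger(n+1)}h$, and then using $\widetilde{V}^{\dagger}\widetilde{V}\widetilde{V}^{\dagger} = \widetilde{V}^{\dagger}$ to simplify $(I_{E^{\otimes n}} \otimes \widetilde{V}^{\dagger}\widetilde{V})\widetilde{V}^{\dagger(n+1)}h = \widetilde{V}^{\dagger(n+1)}h$, yields the recursion
\begin{equation*}
\|\widetilde{V}^{\dagger(n)}h\|^{2} = \|(I_{E^{\otimes n}} \otimes P_{\mathcal{W}})\widetilde{V}^{\dagger(n)}h\|^{2} + \|(I_{E^{\otimes n}} \otimes D_{\widetilde{V}})\widetilde{V}^{\dagger(n+1)}h\|^{2} + \|\widetilde{V}^{\dagger(n+1)}h\|^{2}.
\end{equation*}
Substituting this into the inductive hypothesis rearranges the sums exactly into the asserted identity at level $n+1$.

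The main obstacle is purely bookkeeping, not conceptual: one has to check that the two orthogonal decompositions are genuinely orthogonal (the ranges of $I_{E^{\otimes n}} \otimes P_{\mathcal{W}}$ and $I_{E^{\otimes n}} \otimes \widetilde{V}\widetilde{V}^{\dagger}$ are orthogonal because $P_{\mathcal{W}}$ and $\widetilde{V}\widetilde{V}^{\dagger}$ are complementary projections, and similarly $D_{\widetilde{V}}^{2}$ and $\widetilde{V}^{\dagger}\widetilde{V}$ sum to $\widetilde{V}^{*}\widetilde{V}$ so the two norms add cleanly) and that the various compositions of $\widetilde{V}^{\dagger(k)}$ and $(I_{E^{\otimes j}} \otimes \cdots)$ combine correctly. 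Note that although the statement only asserts $\gamma(\widetilde{V}) \geq 1$, only closedness of $R(\widetilde{V})$ (which follows from $\gamma(\widetilde{V}) > 0$) is actually needed so that $\widetilde{V}^{\dagger}$ exists; the inequality $\gamma(\widetilde{V}) \geq 1$ plays no role in the identity itself.
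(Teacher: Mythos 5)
Your proof is correct and follows essentially the same route as the paper: induction on $n$, with the base case obtained from the orthogonal splitting $h = P_{\mathcal{W}}h + \widetilde{V}\widetilde{V}^{\dagger}h$ together with $\|D_{\widetilde{V}}\xi\|^{2}=\|\widetilde{V}\xi\|^{2}-\|\widetilde{V}^{\dagger}\widetilde{V}\xi\|^{2}$ and $\widetilde{V}^{\dagger}\widetilde{V}\widetilde{V}^{\dagger}=\widetilde{V}^{\dagger}$, and the inductive step given by exactly the recursion $\|\widetilde{V}^{\dagger(n)}h\|^{2}=\|(I_{E^{\otimes n}}\otimes P_{\mathcal{W}})\widetilde{V}^{\dagger(n)}h\|^{2}+\|(I_{E^{\otimes n}}\otimes D_{\widetilde{V}})\widetilde{V}^{\dagger(n+1)}h\|^{2}+\|\widetilde{V}^{\dagger(n+1)}h\|^{2}$ that the paper invokes (you merely spell out its derivation, which the paper leaves implicit). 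One small caveat: the hypothesis $\gamma(\widetilde{V})\geq 1$ is not entirely idle, since it guarantees $\widetilde{V}^{*}\widetilde{V}-\widetilde{V}^{\dagger}\widetilde{V}\geq 0$ and hence that $D_{\widetilde{V}}$ is well defined as a positive square root, though beyond that only closedness of $R(\widetilde{V})$ is used.
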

\begin{proof}
	We use the Mathematical induction. Let $ h \in \mathcal{H}, $ then \begin{equation*}
	\|(I_{\mathcal{H}}-\widetilde{V}\widetilde{V}^\dagger ) h\|^2=\|h\|^2-\|\widetilde{V}\widetilde{V}^\dagger h\|^2,
	\end{equation*} which is equivalent to \begin{equation}\label{2.4}
	\|h\|^2= \|(I_{\mathcal{H}}-\widetilde{V}\widetilde{V}^\dagger ) h\|^2 +\|\widetilde{V}\widetilde{V}^\dagger h\|^2. 
	\end{equation} Since $\|D_{\widetilde{V}}(\xi)\|^2= \|\widetilde{V}(\xi)\|^2- \|\widetilde{V}^\dagger\widetilde{V}(\xi)\|^2$ for every $ \xi \in E \ot \mathcal{H},$ replace $ \widetilde{V}^\dagger h $ by $ \xi ,$ we get \begin{equation*}
	\|D_{\widetilde{V}}(\widetilde{V}^\dagger h )\|^2= \|\widetilde{V}(\widetilde{V}^\dagger h )\|^2- \|\widetilde{V}^\dagger\widetilde{V}(\widetilde{V}^\dagger h )\|^2= \|\widetilde{V}(\widetilde{V}^\dagger h )\|^2- \|\widetilde{V}^\dagger h\|.
	\end{equation*} Therefore \begin{equation*}
	\|h\|^2= \|(I_{\mathcal{H}}-\widetilde{V}\widetilde{V}^\dagger ) h\|^2 + \|D_{\widetilde{V}}(\widetilde{V}^\dagger h )\|^2 + \|\widetilde{V}^\dagger h \|^2,
	\end{equation*} and thus \begin{equation*}
	\|h\|^2= \|P_{\mathcal{W}} h\|^2 + \|D_{\widetilde{V}}(\widetilde{V}^\dagger h )\|^2 + \|\widetilde{V}^\dagger h \|^2
	\end{equation*} which establishes the equality for $ n=1.$ Now suppose that the equality true for $ n.$ We observe that \begin{equation*}
	\|\widetilde{V}^{\dagger (n)} h\|^2= \| (I_{E^{\ot n}} \ot P_{\mathcal{W}}) \widetilde{V}^{\dagger( n)} h\|^2 + \|(I_{E^{\ot n}} \ot D_{\widetilde{V}})(\widetilde{V}^{\dagger (n+1)} h )\|^2 + \|\widetilde{V}^{\dagger( {n+1})} h\|^2 
	\end{equation*} for every $n \in \mathbb{N}.$ Using the induction hypothesis and the previous equation,
	we obtain \begin{align*}
	\|h\|^2 &= \sum_{i=0}^{n-1} \| (I_{E^{\ot i}} \ot P_{\mathcal{W}}) (\widetilde{V}^\dagger)^{(i)}h\|^2 + \|(\widetilde{V}^\dagger)^{(n)}h\|^2 + \sum_{i=1}^{n} \| (I_{E^{\ot (i-1)}} \ot D_{\widetilde{V}}) (\widetilde{V}^\dagger)^{(i)}h\|^2 \\& = \sum_{i=0}^{n-1} \| (I_{E^{\ot i}} \ot P_{\mathcal{W}}) (\widetilde{V}^\dagger)^{(i)}h\|^2 + \| (I_{E^{\ot n}} \ot P_{\mathcal{W}}) \widetilde{V}^{\dagger (n)} h\|^2 + \|(I_{E^{\ot (n-1)}} \ot D_{\widetilde{V}})(\widetilde{V}^{\dagger n} h )\|^2 \\& \:\:\:\:\:\:\ +  \|\widetilde{V}^{\dagger( {n+1}}) h\|^2 + \sum_{i=1}^{n} \| (I_{E^{\ot (i-1)}} \ot D_{\widetilde{V}}) (\widetilde{V}^\dagger)^{(i)}h\|^2 \\&  = \sum_{i=0}^{n} \| (I_{E^{\ot i}} \ot P_{\mathcal{W}}) (\widetilde{V}^\dagger)^{(i)}h\|^2 + \|(\widetilde{V}^\dagger)^{({n+1})}h\|^2 + \sum_{i=1}^{n+1} \| (I_{E^{\ot (i-1)}} \ot D_{\widetilde{V}}) (\widetilde{V}^\dagger)^{(i)}h\|^2.  \qedhere\end{align*}  \end{proof} \qedhere
The following lemma will help to prove the next theorem, which generalizes \cite[Lemma 3.3]{HV19}.
\begin{lemma}\label{Lemma 2.5}
	Consider $ n \in \mathbb{N}$ and a CB-representation  $(\sigma,V)$  of $E$ on $\mathcal{H}$ such that $\wV$ has closed range. Then  \begin{enumerate}
		\item For every $ h \in \mathcal{H},$ we have \begin{equation*}
		(I_{\mathcal{H}} -{\widetilde{V}}_n \widetilde{V}^{\dagger (n)})h=\sum_{i=0}^{n-1} \widetilde{V}_{i}(I_{E^{\ot i}}\ot P_{\mathcal{W}})\widetilde{V}^{\dagger (i)}h ;
		\end{equation*}   \item For $ 0\leq i\leq n $ and $\xi_n \in E^{\ot n }  \ot \mathcal{H},$ we get \begin{equation*}
		(I_{E^{\ot n} \ot \mathcal{H}} -\widetilde{V}^{\dagger (n)}{\widetilde{V}}_n) (\xi_n)=\sum_{i=0}^{n-1}(I_{E^{\ot n-i}} \ot \widetilde{V}^{\dagger (i)})(I_{E^{\ot n-(i+1)}}\ot P_{{\mathcal{W}^{\dagger}}})(I_{E^{\ot n-i}} \ot \widetilde{V}_i)(\xi_n),
		\end{equation*}  where $ P_{\mathcal{W}}=I_{\mathcal{H}}-\widetilde{V}\widetilde{V}^\dagger ,$ $ P_{\mathcal{W}^\dagger}:=I_{E\ot \mathcal{H}}-\widetilde{V}^\dagger\widetilde{V}. $
	\end{enumerate}
\end{lemma}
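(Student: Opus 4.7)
The plan is to prove both identities by writing each summand as a telescoping difference and then collapsing the sum. The algebraic inputs are the decompositions
\[
P_{\mathcal{W}} = I_{\mathcal{H}} - \widetilde{V}\widetilde{V}^\dagger, \qquad P_{\mathcal{W}^\dagger} = I_{E \otimes \mathcal{H}} - \widetilde{V}^\dagger \widetilde{V},
\]
together with the two equivalent recursive expressions
\[
\widetilde{V}_{i+1} = \widetilde{V}_i(I_{E^{\otimes i}} \otimes \widetilde{V}) = \widetilde{V}(I_E \otimes \widetilde{V}_i),
\]
\[
\widetilde{V}^{\dagger(i+1)} = (I_{E^{\otimes i}} \otimes \widetilde{V}^\dagger)\widetilde{V}^{\dagger(i)} = (I_E \otimes \widetilde{V}^{\dagger(i)})\widetilde{V}^\dagger,
\]
both of which are immediate from the defining formulas of $\widetilde{V}_n$ and $\widetilde{V}^{\dagger(n)}$.

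For part (1), I would expand the $i$-th summand using $P_{\mathcal{W}} = I_{\mathcal{H}} - \widetilde{V}\widetilde{V}^\dagger$ and recognize the resulting second piece by means of the displayed recursions:
\[
\widetilde{V}_i\bigl(I_{E^{\otimes i}} \otimes P_{\mathcal{W}}\bigr)\widetilde{V}^{\dagger(i)} = \widetilde{V}_i \widetilde{V}^{\dagger(i)} - \widetilde{V}_i(I_{E^{\otimes i}} \otimes \widetilde{V})(I_{E^{\otimes i}} \otimes \widetilde{V}^\dagger)\widetilde{V}^{\dagger(i)} = \widetilde{V}_i\widetilde{V}^{\dagger(i)} - \widetilde{V}_{i+1}\widetilde{V}^{\dagger(i+1)}.
\]
Summing from $i=0$ to $n-1$ telescopes to $\widetilde{V}_0\widetilde{V}^{\dagger(0)} - \widetilde{V}_n\widetilde{V}^{\dagger(n)} = I_{\mathcal{H}} - \widetilde{V}_n\widetilde{V}^{\dagger(n)}$, which is exactly the asserted identity.

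For part (2), the projection $I_{E^{\otimes n-(i+1)}} \otimes P_{\mathcal{W}^\dagger}$ sits in the middle of the summand, so one first needs the two ``shift'' identities
\[
(I_{E^{\otimes n-(i+1)}} \otimes \widetilde{V})(I_{E^{\otimes n-i}} \otimes \widetilde{V}_i) = I_{E^{\otimes n-(i+1)}} \otimes \widetilde{V}_{i+1},
\]
\[
(I_{E^{\otimes n-i}} \otimes \widetilde{V}^{\dagger(i)})(I_{E^{\otimes n-(i+1)}} \otimes \widetilde{V}^\dagger) = I_{E^{\otimes n-(i+1)}} \otimes \widetilde{V}^{\dagger(i+1)},
\]
each of which follows by reassociating tensor factors and invoking the recursive formulas above. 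Inserting $P_{\mathcal{W}^\dagger} = I - \widetilde{V}^\dagger \widetilde{V}$ then rewrites the $i$-th summand as
\[
I_{E^{\otimes n-i}} \otimes (\widetilde{V}^{\dagger(i)}\widetilde{V}_i) - I_{E^{\otimes n-(i+1)}} \otimes (\widetilde{V}^{\dagger(i+1)}\widetilde{V}_{i+1}),
\]
and the sum collapses to $I_{E^{\otimes n} \otimes \mathcal{H}} - \widetilde{V}^{\dagger(n)}\widetilde{V}_n$, as required.

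The only delicate point is keeping track of which tensor slot each operator occupies in part (2); once the two shift identities are in place the telescoping is purely formal, so there is no real obstacle beyond careful bookkeeping of the indices.
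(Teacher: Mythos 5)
Your proposal is correct and is essentially the paper's own argument run in reverse: the paper writes $I-\widetilde{V}_n\widetilde{V}^{\dagger(n)}$ (resp.\ $I-\widetilde{V}^{\dagger(n)}\widetilde{V}_n$) as a telescoping sum and then factors each difference using $\widetilde{V}_{i+1}=\widetilde{V}_i(I_{E^{\ot i}}\ot\widetilde{V})=\widetilde{V}(I_E\ot\widetilde{V}_i)$ and $\widetilde{V}^{\dagger(i+1)}=(I_{E^{\ot i}}\ot\widetilde{V}^{\dagger})\widetilde{V}^{\dagger(i)}=(I_E\ot\widetilde{V}^{\dagger(i)})\widetilde{V}^{\dagger}$, whereas you expand each summand into the same telescoping difference and collapse the sum. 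Both the recursions and the tensor-slot bookkeeping you invoke match the paper's computation, so there is nothing to correct.
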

\begin{proof}
	$(1)$	Consider \begin{align*}
	I_{\mathcal{H}} -{\widetilde{V}}_n \widetilde{V}^{\dagger (n)}& = \sum_{i=0}^{n-1}\widetilde{V}_{i}\widetilde{V}^{\dagger (i)} -\widetilde{V}_{i+1}\widetilde{V}^{\dagger ({i+1})}  = \sum_{i=0}^{n-1}\widetilde{V}_{i}\widetilde{V}^{\dagger (i)} -\widetilde{V}_{i}(I_{E^{\ot i}} \ot \widetilde{V})(I_{E^{\ot i}} \ot \widetilde{V}^\dagger)\widetilde{V}^{\dagger ({i})}\\&= \sum_{i=0}^{n-1} \widetilde{V}_{i}(I_{E^{\ot i} \ot \mathcal{H}} - (I_{E^{\ot i}} \ot \widetilde{V})(I_{E^{\ot i}} \ot \widetilde{V}^\dagger))\widetilde{V}^{\dagger (i)} = \sum_{i=0}^{n-1} \widetilde{V}_{i}(I_{E^{\ot i} \ot \mathcal{H}} - (I_{E^{\ot i}} \ot \widetilde{V} \widetilde{V}^\dagger))\widetilde{V}^{\dagger (i)} \\&=  \sum_{i=0}^{n-1} \widetilde{V}_{i}(I_{E^{\ot i}}\ot P_{\mathcal{W}})\widetilde{V}^{\dagger (i)}.
	\end{align*}$(2)$ For second equality \begin{align*}
	I_{E^{\ot n} \ot \mathcal{H}} -\widetilde{V}^{\dagger (n)}{\widetilde{V}}_n &=\sum_{i=0}^{n-1} I_{E^{\ot n-(i+1)}} \ot (I_{E} \ot \widetilde{V}^{\dagger (i)}\widetilde{V}_i- \widetilde{V}^{\dagger ({i+1})}\widetilde{V}_{i+1} )\\&=\sum_{i=0}^{n-1} I_{E^{\ot n-(i+1)}} \ot (I_{E} \ot \widetilde{V}^{\dagger (i)}\widetilde{V}_i- (I_{E} \ot \widetilde{V}^{\dagger (i)})\widetilde{V}^{\dagger}\widetilde{V}(I_{E} \ot \widetilde{V}_{i})) \\& = \sum_{i=0}^{n-1}(I_{E^{\ot n-i}} \ot \widetilde{V}^{\dagger (i)})(I_{E^{\ot n-(i+1)}}\ot( I_{E\ot \mathcal{H}}-\widetilde{V}^\dagger\widetilde{V}))(I_{E^{\ot n-i}} \ot \widetilde{V}_{i})\\&= \sum_{i=0}^{n-1}I_{E^{\ot n-(i+1)}}\ot ((I_{E} \ot \widetilde{V}^{\dagger (i)}) P_{{\mathcal{W}^{\dagger}}}(I_{E} \ot \widetilde{V}_i)).
	\qedhere\end{align*}
\end{proof}

\begin{theorem}\label{2.1}
	Consider $ n \in \mathbb{N}$ and  a CB-representation  $(\sigma,V)$  of $E$ on $\mathcal{H}$ such that $\wV$ has closed range. Then  \begin{enumerate}
		\item $
		N({\widetilde{V}^{\dagger (n)}}) \subseteq \displaystyle\bigvee_{i=0}^{n-1}\{\widetilde{V}_i(\xi_{i}): \xi_{i} \in E^{\ot i} \ot \mathcal{W}\}.$
		\item If $(\sigma,V)$ is regular, we have
		$
		N({\widetilde{V}}_n)= \displaystyle\bigvee_{i=0}^{n-1} \{ (I_{E^{\ot n-i}}\ot {\widetilde{V}^{\dagger (i)}})(\xi_{n-i}):\xi_{n-i} \in E^{\ot n-(i+1)}\ot \mathcal{W}^{\dagger} \}.$ 
		
	\end{enumerate}  
\end{theorem}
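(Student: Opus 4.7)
The plan is to derive both parts directly from the two identities in Lemma \ref{Lemma 2.5}, together with the regularity characterization of Theorem \ref{cc} and the generalized-inverse identity of Theorem \ref{3.6}.

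For part (1), I would start with $h \in N(\widetilde{V}^{\dagger(n)})$; then $\widetilde{V}_n \widetilde{V}^{\dagger(n)} h = 0$, and Lemma \ref{Lemma 2.5}(1) immediately gives
\begin{equation*}
h = \sum_{i=0}^{n-1} \widetilde{V}_i\bigl((I_{E^{\otimes i}} \otimes P_{\mathcal{W}}) \widetilde{V}^{\dagger(i)} h\bigr).
\end{equation*}
Since $P_{\mathcal{W}}$ projects onto $\mathcal{W}$, the element $(I_{E^{\otimes i}} \otimes P_{\mathcal{W}}) \widetilde{V}^{\dagger(i)} h$ lies in $E^{\otimes i} \otimes \mathcal{W}$, so $h$ belongs to the claimed closed linear span. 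Note that this part needs only that $\widetilde{V}$ has closed range.

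The forward inclusion of part (2) is the mirror argument using Lemma \ref{Lemma 2.5}(2). For $\xi_n \in N(\widetilde{V}_n)$ one has $\widetilde{V}^{\dagger(n)}\widetilde{V}_n \xi_n = 0$, so the identity rewrites $\xi_n$ as $\sum_{i=0}^{n-1}(I_{E^{\otimes n-i}} \otimes \widetilde{V}^{\dagger(i)})(\eta_i)$ with $\eta_i := (I_{E^{\otimes n-(i+1)}} \otimes P_{\mathcal{W}^{\dagger}})(I_{E^{\otimes n-i}} \otimes \widetilde{V}_i)(\xi_n)$, which by the definition of $P_{\mathcal{W}^{\dagger}}$ lies in $E^{\otimes n-(i+1)} \otimes \mathcal{W}^{\dagger}$.

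The reverse inclusion in part (2) is where regularity is actually used, and it is the step I expect to demand the most care. Given $\xi \in E^{\otimes n-(i+1)} \otimes \mathcal{W}^{\dagger} = E^{\otimes n-(i+1)} \otimes N(\widetilde{V})$, I must show $\widetilde{V}_n(I_{E^{\otimes n-i}} \otimes \widetilde{V}^{\dagger(i)})\xi = 0$. Using the factorization $\widetilde{V}_n = \widetilde{V}_{n-i}(I_{E^{\otimes n-i}} \otimes \widetilde{V}_i)$, this expression becomes $\widetilde{V}_{n-i}(I_{E^{\otimes n-i}} \otimes \widetilde{V}_i \widetilde{V}^{\dagger(i)})\xi$. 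Theorem \ref{cc} applied with $n=1$ and $m=i$ gives $N(\widetilde{V}) \subseteq E \otimes R(\widetilde{V}_i)$, so $\xi \in E^{\otimes n-i} \otimes R(\widetilde{V}_i)$, and Theorem \ref{3.6} (applied to the generalized inverse $\widetilde{V}^{\dagger}$) shows that $\widetilde{V}_i \widetilde{V}^{\dagger(i)}$ acts as the identity on $R(\widetilde{V}_i)$. Hence $(I_{E^{\otimes n-i}} \otimes \widetilde{V}_i \widetilde{V}^{\dagger(i)})\xi = \xi$, and the whole expression collapses to $\widetilde{V}_{n-i}\xi = \widetilde{V}_{n-i-1}(I_{E^{\otimes n-i-1}} \otimes \widetilde{V})\xi = 0$, the last equality because $\xi \in E^{\otimes n-(i+1)} \otimes N(\widetilde{V})$. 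The main obstacle is more notational than conceptual, namely keeping the various tensor indices aligned throughout the factorizations.
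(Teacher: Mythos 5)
Your proposal is correct and follows essentially the same route as the paper: both parts rest on the two identities of Lemma \ref{Lemma 2.5}, and the reverse inclusion in (2) is the same computation $\widetilde{V}_n(I_{E^{\ot n-i}}\ot \widetilde{V}^{\dagger(i)})\xi=\widetilde{V}_{n-i}(I_{E^{\ot n-i}}\ot \widetilde{V}_i\widetilde{V}^{\dagger(i)})\xi=\widetilde{V}_{n-i}\xi=0$. The only (immaterial) difference is that you justify $(I_{E^{\ot n-i}}\ot \widetilde{V}_i\widetilde{V}^{\dagger(i)})\xi=\xi$ directly from Theorem \ref{cc}(1) together with Theorem \ref{3.6}, whereas the paper passes through $N(\widetilde{V})\subseteq E\ot R^{\infty}(V)$ and Remark \ref{Inverse}, which itself rests on Theorem \ref{3.6}.
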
 
\begin{proof} \begin{enumerate}
		\item Let $ h \in N({\widetilde{V}^{\dagger (n)}}),$ then ${\widetilde{V}}_n\widetilde{V}^{\dagger (n)}h=0 .$ Therefore, by Lemma \ref{Lemma 2.5}, we conclude \begin{equation*}
		h = \sum_{i=0}^{n-1} \widetilde{V}_{i}(I_{E^{\ot i}}\ot P_{\mathcal{W}})\widetilde{V}^{\dagger (i)}h,
		\end{equation*} and thus $ h\in \displaystyle\bigvee_{i=0}^{n-1}\{\widetilde{V}_i(\xi_{i}): \xi_{i} \in E^{\ot i} \ot \mathcal{W}\}.$
		\item Let $ \xi_{n} \in N({{\widetilde{V}}_n}),$ then $\widetilde{V}^{\dagger (n)}{\widetilde{V}}_n (\xi_{n}) =0 .$ Therefore using Lemma \ref{Lemma 2.5} we have \begin{equation*}
		\xi_{n} = \sum_{i=0}^{n-1}(I_{E^{\ot n-i}} \ot \widetilde{V}^{\dagger (i)}) (I_{E^{\ot n-(i+1)}}\ot P_{{\mathcal{W}^{\dagger}}})(I_{E^{\ot n-i}} \ot \widetilde{V}_i) \xi_{n}
		\end{equation*}  and then $ \xi_{n} \in \displaystyle\bigvee_{i=0}^{n-1} \{ (I_{E^{\ot n-i}}\ot {\widetilde{V}^{\dagger (i)}})({\xi_{n-i}}): \xi_{n-i} \in E^{\ot n-(i+1)}\ot \mathcal{W}^{\dagger} \}.$ On the other hand, let $\xi_{n-i} \in E^{\ot n-(i+1)}\ot \mathcal{W}^{\dagger},$  for $ 0 \leq i \leq n-1 .$  Since $(\sigma,V)$ is regular,   $\mathcal{W}^\dagger =N({\widetilde{V}})\subseteq E \ot R^\infty({\widetilde{V}}) $ and  by using  Remark \ref{Inverse},  we have  $ (I_{E^{\ot({n-i})}} \ot \widetilde{V}_{i} \widetilde{V}^{\dagger (i)})(\xi_{n-i} )= \xi_{n-i} .$ Therefore \begin{align*}
		{\widetilde{V}}_n(I_{E^{\ot(n-i)}} \ot\widetilde{V}^{\dagger (i)} )(\xi_{n-i})=&\widetilde{V}_{n-i}(I_{E^{\ot(n-i)}} \ot\widetilde{V}_{ i})(I_{E^{\ot(n-i)}} \ot\widetilde{V}^{\dagger (i)} )(\xi_{n-i}) \\&=\widetilde{V}_{n-i}(I_{E^{\ot(n-i)}} \ot \widetilde{V}_{i}\widetilde{V}^{\dagger (i)} )(\xi_{n-i})\\&=\widetilde{V}_{n-i}(\xi_{n-i})=0,
		\end{align*} 
		that is, $(I_{E^{\ot(n-i)}} \ot\widetilde{V}^{\dagger (i)} )(\xi_{n-i}) \in N({\widetilde{V}}_{n}) $ for $ \xi_{n-i} \in E^{\ot n-(i+1)}\ot \mathcal{W}^\dagger , 0 \leq i \leq n-1 .$ Hence \begin{equation*}
		\displaystyle\bigvee_{i=0}^{n-1} \{ (I_{E^{\ot n-i}}\ot {\widetilde{V}^{\dagger (i)}})(\xi_{n-i})\: : \: \xi_{n-i} \in E^{\ot n-(i+1)} \ot \mathcal{W}^{\dagger}\}\subseteq N({\widetilde{V}}_{n}).
		\end{equation*}
		\qedhere\end{enumerate}
\end{proof}

Consider a CB-representation  $(\sigma,V)$  of $E$ on $\mathcal{H}$ and let  $\mathcal{W}$ be a wandering subspace for $(\sigma, V).$ In this section, we refer to the smallest $(\sigma, V)$-invariant subspace of $\mathcal{H}$ containing $\mathcal{W},$ by the notation $[\mathcal{W}]_{\wt{V}},$
i.e.,   $$[\mathcal{W}]_{\wt{V}}:=\displaystyle\bigvee_{n \in \mathbb{N}_0}\wt{V}_n(E^{\ot n} \ot \mathcal{W}).$$ Note that if $\mathcal{K}$ is $(\sigma, V)$-reducing and $\mathcal{W} \subseteq \mathcal{K}$, then $[\mathcal{W}]_{\wt{V}} \subseteq \mathcal{K}.$  If  $  R^{\infty}{({{V}})} $  reduces $(\sigma,V),$ then $ [\mathcal{W}]_{\wt{V}} \subseteq  R^{\infty}{({{V}})}^{\perp}, \:\mathcal{W}=\mathcal{H} \ominus \wt V(E \ot \mathcal{H}).$  Also using the equalities
\begin{equation*}
R^{\infty}({{V}})^\perp =\big( \bigcap_{n\in \mathbb{N}_0}  R({\widetilde{V}}_{n})\big)^\perp =\displaystyle\bigvee_{n \in \mathbb{N}_0} R({\widetilde{V}}_n)^\perp =\displaystyle\bigvee_{n \in \mathbb{N}_0} N({\widetilde{V}}_n ^*)  \:\: \mbox{and}
\end{equation*}  \begin{equation*}
R^{\infty}({{V}}^{\dagger*})^\perp =\big( \bigcap_{n\in \mathbb{N}_0}  R({\widetilde{V}}^{\dagger*{(n)}})\big)^\perp =\displaystyle\bigvee_{n \in \mathbb{N}_0} N({\widetilde{V}^{\dagger(n)}}), 
\end{equation*} we get the following duality relations.
\begin{corollary}\label{CC}
	Consider  a CB-representation  $(\sigma,V)$ of $E$ on $\mathcal{H}$ so that $\wV$ has closed range. Then   \begin{enumerate}
		\item $	R^{\infty}({{V}^{\dagger*}})^\perp \subset [ \mathcal{W}]_{\widetilde{V}},$ 
		\item if  $(\sigma,V)$ is regular, then $ R^{\infty}({V})^{\perp} = 
		\displaystyle\bigvee_{i=0}^{\infty} \{{\widetilde{V}^{\dagger*(i)}}(\xi_i )\: : \:\xi_i \in E^{\ot i} \ot \mathcal{W} \}.
		$
	\end{enumerate} 
	
\end{corollary}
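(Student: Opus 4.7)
The plan is to derive both parts from Theorem \ref{2.1}, Lemma \ref{Lemma 2.5}, and the two displayed identities $R^{\infty}(V)^\perp = \bigvee_{n} N(\widetilde{V}_n^*)$ and $R^{\infty}(V^{\dagger*})^\perp = \bigvee_{n} N(\widetilde{V}^{\dagger(n)})$ recorded just above the corollary. Part (1) is essentially immediate: Theorem \ref{2.1}(1) yields $N(\widetilde{V}^{\dagger(n)}) \subseteq \bigvee_{i=0}^{n-1} \widetilde{V}_i(E^{\ot i} \ot \mathcal{W}) \subseteq [\mathcal{W}]_{\widetilde{V}}$ for every $n$, and passing to the closed linear span over $n$ and invoking the second identity concludes the argument without any regularity hypothesis.

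For part (2), I would establish the two inclusions separately. For ``$\subseteq$'', the key move is to take adjoints in Lemma \ref{Lemma 2.5}(1); since $P_{\mathcal{W}}$ is self-adjoint, this gives
\begin{equation*}
I_{\mathcal{H}} - \widetilde{V}^{\dagger(n)*}\widetilde{V}_n^* = \sum_{i=0}^{n-1} \widetilde{V}^{\dagger(i)*}(I_{E^{\ot i}} \ot P_{\mathcal{W}})\widetilde{V}_i^*.
\end{equation*}
Applying both sides to any $h \in N(\widetilde{V}_n^*)$ immediately exhibits $h$ as a finite sum of vectors in $\widetilde{V}^{\dagger*(i)}(E^{\ot i} \ot \mathcal{W})$, and taking the join over $n$ together with the first displayed identity gives $R^{\infty}(V)^\perp \subseteq \bigvee_{i=0}^{\infty} \widetilde{V}^{\dagger*(i)}(E^{\ot i} \ot \mathcal{W})$.

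For the reverse inclusion ``$\supseteq$'', it suffices to show $\langle \xi_i, \widetilde{V}^{\dagger(i)} h \rangle = 0$ for every $\xi_i \in E^{\ot i} \ot \mathcal{W}$ and every $h \in R^{\infty}(V)$. Since $\widetilde{V}^{\dagger}$ is a generalized inverse of $\widetilde{V}$ and $(\sigma,V)$ is regular, Corollary \ref{invariant} gives $\widetilde{V}^{\dagger}(R^{\infty}(V)) \subseteq E \ot R^{\infty}(V)$; iterating yields $\widetilde{V}^{\dagger(i)} h \in E^{\ot i} \ot R^{\infty}(V)$. The main subtlety is the concluding step: orthogonality of $R^{\infty}(V) \subseteq R(\widetilde{V}) = \mathcal{W}^\perp$ inside $\mathcal{H}$ does not automatically transfer to the interior tensor product. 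One must exploit that both $\mathcal{W} = N(\widetilde{V}^*)$ and $R^{\infty}(V)$ are $\sigma(\mathcal{B})$-invariant---an easy consequence of the covariance relation $\sigma(a)\widetilde{V} = \widetilde{V}(\phi(a) \ot I_{\mathcal{H}})$---so that the factor $\sigma(\langle e_1, e_2 \rangle) h_2$ in the inner product formula on $E^{\ot i} \ot_{\sigma} \mathcal{H}$ stays in $\mathcal{W}$ and pairs trivially with any vector in $R^{\infty}(V)$. This furnishes the required orthogonality $E^{\ot i} \ot R^{\infty}(V) \perp E^{\ot i} \ot \mathcal{W}$ and completes the proof.
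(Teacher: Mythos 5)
Your proposal is correct and is essentially the argument the paper intends: the paper states Corollary \ref{CC} without proof as an immediate consequence of Theorem \ref{2.1}(1), the adjointed projection formula of Lemma \ref{Lemma 2.5}(1), and the two displayed duality identities, which is exactly what you use. Your extra care in the reverse inclusion of (2) --- invoking Corollary \ref{invariant} to get $\widetilde{V}^{\dagger(i)}h \in E^{\ot i}\ot R^{\infty}(V)$ and the $\sigma(\mathcal{B})$-invariance of $\mathcal{W}$ (equivalently of $R^{\infty}(V)$) to justify orthogonality in the interior tensor product --- correctly supplies details the paper leaves implicit.
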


\begin{theorem}\label{Theorem 2.9}
	Consider a regular CB-representation $(\sigma,V)$  of $E$ on $\mathcal{H}$ so that $  R^{\infty}{({{V}})} $  reduces $(\sigma,V),$ then $(\sigma,V)$ is bi-regular.
\end{theorem}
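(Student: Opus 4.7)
The plan is to take $S = \widetilde{V}^{\dagger}$ as our generalized inverse and verify condition (1) of Theorem \ref{S}, namely $N(I_{E^{\otimes m}} \otimes \widetilde{V}^{\dagger}) \subseteq R(\widetilde{V}^{\dagger(m)})$ for every $m \in \mathbb{N}$. Since $N(I_{E^{\otimes m}} \otimes \widetilde{V}^{\dagger}) = E^{\otimes m} \otimes \mathcal{W}$, and since $R^{\infty}(V) \subseteq R(\widetilde{V})$ forces $\mathcal{W} = R(\widetilde{V})^{\perp} \subseteq R^{\infty}(V)^{\perp}$, it will actually suffice to produce the stronger containment $E^{\otimes m} \otimes R^{\infty}(V)^{\perp} \subseteq R(\widetilde{V}^{\dagger(m)})$.

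The first key step is to cash in the reducing hypothesis. Because $R^{\infty}(V)$ reduces $(\sigma, V)$, both $\widetilde{V}$ and $\widetilde{V}^{*}$ respect the splitting $\mathcal{H} = R^{\infty}(V) \oplus R^{\infty}(V)^{\perp}$ (together with the matching decomposition on $E \otimes \mathcal{H}$), and from this I would deduce that the Moore--Penrose inverse $\widetilde{V}^{\dagger}$ respects the same splitting. One inclusion, $\widetilde{V}^{\dagger}(R^{\infty}(V)) \subseteq E \otimes R^{\infty}(V)$, is already Corollary \ref{invariant}; the other, $\widetilde{V}^{\dagger}(R^{\infty}(V)^{\perp}) \subseteq E \otimes R^{\infty}(V)^{\perp}$, follows from a short computation using $\widetilde{V}\widetilde{V}^{\dagger} = P_{R(\widetilde{V})}$, $R(\widetilde{V}^{\dagger}) = N(\widetilde{V})^{\perp}$, and the regularity inclusion $N(\widetilde{V}) \subseteq E \otimes R^{\infty}(V)$, which together force the $E \otimes R^{\infty}(V)$-component of $\widetilde{V}^{\dagger} h$ to vanish whenever $h \in R^{\infty}(V)^{\perp}$.

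Next, I would set $\mathcal{H}_{\perp} := R^{\infty}(V)^{\perp}$ and $W := \widetilde{V}\vert_{E \otimes \mathcal{H}_{\perp}} : E \otimes \mathcal{H}_{\perp} \to \mathcal{H}_{\perp}$. Regularity says $N(\widetilde{V}) \subseteq E \otimes R^{\infty}(V)$, so $W$ is injective with closed range. The identity $\widetilde{V}^{\dagger} \widetilde{V} = P_{N(\widetilde{V})^{\perp}}$, together with $E \otimes \mathcal{H}_{\perp} \subseteq N(\widetilde{V})^{\perp}$, shows that $W^{\dagger} := \widetilde{V}^{\dagger}\vert_{\mathcal{H}_{\perp}}$ is an honest left inverse of $W$. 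Iterating gives $(W^{\dagger})^{(m)} W^{(m)} = I_{E^{\otimes m} \otimes \mathcal{H}_{\perp}}$, so $R((W^{\dagger})^{(m)}) = E^{\otimes m} \otimes \mathcal{H}_{\perp}$.

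To finish, the reducing preservation of $\widetilde{V}^{\dagger}$ established in the first step identifies $\widetilde{V}^{\dagger(m)}\vert_{\mathcal{H}_{\perp}}$ with $(W^{\dagger})^{(m)}$; hence
\[
E^{\otimes m} \otimes \mathcal{W} \subseteq E^{\otimes m} \otimes \mathcal{H}_{\perp} = R((W^{\dagger})^{(m)}) \subseteq R(\widetilde{V}^{\dagger(m)}),
\]
which is condition (1) of Theorem \ref{S} and yields bi-regularity. The main obstacle I anticipate is the bookkeeping in the first step, confirming that the Moore--Penrose inverse respects the reducing decomposition in the correspondence setting; once that is verified, the passage to the restriction $W$ turns the iterated-inverse problem into the essentially trivial observation that a left-invertible operator has left inverses for all of its tensorial ampliations.
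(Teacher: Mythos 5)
Your proposal is correct, but it takes a genuinely different route from the paper's proof. The paper also works with $S=\widetilde{V}^{\dagger}$, but it verifies condition (2) of Theorem \ref{S}: for $\xi \in E \otimes N(\widetilde{V}^{\dagger(n)})$ it invokes Theorem \ref{2.1} (whose proof rests on the telescoping identities of Lemma \ref{Lemma 2.5}) to place $N(\widetilde{V}^{\dagger(n)})$ inside $[\mathcal{W}]_{\widetilde{V}}$, uses the reducing hypothesis to get $[\mathcal{W}]_{\widetilde{V}} \subseteq R^{\infty}(V)^{\perp}$, and then regularity, in the form $N(\widetilde{V}) \subseteq E \otimes R^{\infty}(V)$, to conclude $E \otimes R^{\infty}(V)^{\perp} \subseteq N(\widetilde{V})^{\perp} = R(\widetilde{V}^{\dagger})$. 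You instead verify condition (1), by showing $\widetilde{V}^{\dagger}$ is block-diagonal for $\mathcal{H} = R^{\infty}(V) \oplus R^{\infty}(V)^{\perp}$ and that its restriction to the complement is an honest left inverse of $\widetilde{V}\vert_{E \otimes R^{\infty}(V)^{\perp}}$, so that $R(\widetilde{V}^{\dagger(m)}) \supseteq E^{\otimes m} \otimes R^{\infty}(V)^{\perp} \supseteq E^{\otimes m} \otimes \mathcal{W}$; this bypasses Theorem \ref{2.1} and Lemma \ref{Lemma 2.5} entirely, needing only Corollary \ref{invariant}, Theorem \ref{reg}, Theorem \ref{cc} and the Moore--Penrose identities, and it yields the extra structural facts that $\widetilde{V}^{\dagger}$ respects the reducing decomposition and that the iterated inverses are surjective onto $E^{\otimes m} \otimes R^{\infty}(V)^{\perp}$. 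The step you flag as the main obstacle does close: writing $\widetilde{V}^{\dagger}h = \xi_{1}+\xi_{2}$ with $\xi_{1} \in E \otimes R^{\infty}(V)$, $\xi_{2} \in E \otimes R^{\infty}(V)^{\perp}$, block-diagonality of $\widetilde{V}$ and $P_{R(\widetilde{V})}h \in R^{\infty}(V)^{\perp}$ force $\widetilde{V}\xi_{1}=0$, and then $\xi_{1} \in N(\widetilde{V})$ together with $\widetilde{V}^{\dagger}h, \xi_{2} \in N(\widetilde{V})^{\perp}$ gives $\xi_{1}=0$; alternatively, a block-diagonal closed-range operator has block-diagonal Moore--Penrose inverse by uniqueness. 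In a full write-up you should also justify $N(I_{E^{\otimes m}} \otimes \widetilde{V}^{\dagger}) = E^{\otimes m} \otimes \mathcal{W}$ (for instance via the projection $I_{E^{\otimes m}} \otimes \widetilde{V}\widetilde{V}^{\dagger}$), a kind of identification the paper itself uses implicitly. The paper's argument is shorter given the machinery already established; yours is more self-contained and slightly more informative.
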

\begin{proof}
	Since $R(\wt{V})$ is closed and  $ \gamma({\widetilde{V}}^*) =\gamma({\widetilde{V}}),$  one can easily see that  $ R({\widetilde{V}}^\dagger) $ is closed. Now, let $ n\in \mathbb{N}$ and $ \xi \in E \ot  N({\widetilde{V}}^{\dagger(n)}), $ then by Theorem \ref{2.1}, we get  $ \xi \in E \ot R^{\infty}({{V}})^\perp.$  By using regularity of $(\sigma,V),$ $E \ot R^{\infty}({V})^\perp \subseteq N({\widetilde{V}})^\perp $ and hence $ \xi \in N({\widetilde{V}})^\perp =R({\widetilde{V}}^\dagger), $ which follows the required condition for bi-regularity, i.e., $ E \ot N({\widetilde{V}}^{\dagger(n)}) \subseteq R({\widetilde{V}}^\dagger) .$ 
\end{proof}
\begin{theorem}\label{Theorem 2.10}
	Consider a regular CB-representation $(\sigma,V)$ of $E$ on $\mathcal{H}$ so that   $\gamma({\widetilde{V}}) \geq 1 $ and satisfy the following inequality  \begin{equation}\label{2.7}
	\|\widetilde{V}_m (\xi_{m} )\|^2 \leq d_m \big(\| (I_{E^{\ot {m-1}}} \ot \widetilde{V}) (\xi_{m})\|^2 -\|(I_{E^{\ot {m-1}}} \ot \widetilde{V}^\dagger \widetilde{V}) (\xi_{m})\|^2\big) + \|(I_{E^{\ot {m-1}}} \ot \widetilde{V}^\dagger \widetilde{V}) (\xi_{m})\|^2,
	\end{equation} where $  \xi_{m} \in E^{\ot m} \ot \mathcal{H}, m\in \mathbb{N}$ and along with $\sum_{m \geq 2}$ $\frac{1}{d_m} = \infty.$ Then \begin{equation*}
	\mathcal{H} = 	[ \mathcal{W}]_{\widetilde{V}} +  R^{\infty}({{V}}),
	\end{equation*} where $ \mathcal{W} = \mathcal{H} \ominus\widetilde{V}(E \ot {\mathcal{H}}).$
\end{theorem}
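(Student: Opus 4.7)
The plan is to decompose each $h \in \mathcal{H}$ using Lemma \ref{Lemma 2.5}(1), which gives for every $n$ the identity $h = (I_{\mathcal{H}} - \widetilde{V}_n \widetilde{V}^{\dagger(n)}) h + \widetilde{V}_n \widetilde{V}^{\dagger(n)} h$, where
$$(I_{\mathcal{H}} - \widetilde{V}_n \widetilde{V}^{\dagger(n)}) h = \sum_{i=0}^{n-1} \widetilde{V}_i (I_{E^{\ot i}} \otimes P_{\mathcal{W}}) \widetilde{V}^{\dagger(i)} h.$$
Each summand has the form $\widetilde{V}_i(\eta_i)$ with $\eta_i \in E^{\ot i} \otimes \mathcal{W}$, so this finite sum sits in $[\mathcal{W}]_{\widetilde{V}}$ for every $n$. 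The strategy is to select a subsequence $(n_k)$ along which the residual $\widetilde{V}_{n_k}\widetilde{V}^{\dagger(n_k)} h$ converges weakly to some $g \in \mathcal{H}$, then identify $g$ as an element of $R^{\infty}(V)$ and conclude $h-g \in [\mathcal{W}]_{\widetilde{V}}$ by weak closedness.

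To control the residual I would feed $\xi_m := \widetilde{V}^{\dagger(m)} h$ into the growth condition \eqref{2.7}. Since $\widetilde{V}^{\dagger(m)} h \in E^{\ot(m-1)} \otimes N(\widetilde{V})^{\perp}$ and $I_{E^{\ot(m-1)}} \otimes \widetilde{V}^{\dagger}\widetilde{V}$ is the orthogonal projection onto this very subspace, it acts as the identity on $\xi_m$, and \eqref{2.7} reduces to
$$\|\widetilde{V}_m \widetilde{V}^{\dagger(m)} h\|^2 \leq d_m\, a_m + b_m, \quad a_m := \|(I_{E^{\ot(m-1)}} \otimes D_{\widetilde{V}})\widetilde{V}^{\dagger(m)} h\|^2, \quad b_m := \|\widetilde{V}^{\dagger(m)} h\|^2.$$
Lemma \ref{Lemma 2.4} then guarantees $\sum_m a_m < \infty$ and that $(b_m)$ is bounded (in fact monotone decreasing, since $\gamma(\widetilde{V}) \geq 1$ forces $\widetilde{V}^{\dagger}$ to be a contraction by Proposition \ref{Regular}). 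Combined with $\sum_m 1/d_m = \infty$, a standard comparison yields $\liminf_m d_m a_m = 0$: otherwise $d_m a_m \geq c > 0$ eventually would give $\sum_m a_m \geq c \sum_m 1/d_m = \infty$. Picking a subsequence $(n_k)$ with $d_{n_k} a_{n_k} \to 0$ makes $(\widetilde{V}_{n_k}\widetilde{V}^{\dagger(n_k)} h)$ norm-bounded.

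Weak sequential compactness now lets me pass to a further subsequence, still called $(n_k)$, with $\widetilde{V}_{n_k}\widetilde{V}^{\dagger(n_k)} h \rightharpoonup g$. For any fixed $n$, as soon as $n_k \geq n$ we have $\widetilde{V}_{n_k}\widetilde{V}^{\dagger(n_k)} h \in R(\widetilde{V}_{n_k}) \subseteq R(\widetilde{V}_n)$; regularity of $(\sigma,V)$ together with Proposition \ref{5.3} forces each $R(\widetilde{V}_n)$ to be closed and hence weakly closed, so $g \in R(\widetilde{V}_n)$ for every $n$, i.e.\ $g \in R^{\infty}(V)$. Then $h - g$ is the weak limit of $(I_{\mathcal{H}} - \widetilde{V}_{n_k}\widetilde{V}^{\dagger(n_k)}) h \in [\mathcal{W}]_{\widetilde{V}}$, and the subspace $[\mathcal{W}]_{\widetilde{V}}$ is norm-closed and hence weakly closed, so $h - g \in [\mathcal{W}]_{\widetilde{V}}$. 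This gives $h = (h - g) + g \in [\mathcal{W}]_{\widetilde{V}} + R^{\infty}(V)$. The main obstacle is precisely this subsequence/weak-cluster step: passing from $\sum a_m < \infty$ and the divergence of $\sum 1/d_m$ to a weak limit that actually lives in $R^{\infty}(V)$ requires the closedness of every $R(\widetilde{V}_n)$, which is exactly what the regularity hypothesis combined with $\gamma(\widetilde{V}) \geq 1$ supplies.
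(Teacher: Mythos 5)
Your proposal is correct and follows essentially the same route as the paper's proof: split $h$ via Lemma \ref{Lemma 2.5}(1), substitute $\widetilde{V}^{\dagger(m)}h$ into the growth condition, combine Lemma \ref{Lemma 2.4} with $\sum_{m\geq 2}1/d_m=\infty$ to get a norm-bounded subsequence of residuals $\widetilde{V}_{n_k}\widetilde{V}^{\dagger(n_k)}h$, extract a weak limit lying in $R^{\infty}(V)$, and conclude by weak closedness of $[\mathcal{W}]_{\widetilde{V}}$. The only cosmetic deviations are that you derive boundedness from $\liminf_m d_m\|(I_{E^{\otimes(m-1)}}\otimes D_{\widetilde{V}})\widetilde{V}^{\dagger(m)}h\|^2=0$ rather than the paper's $\liminf_m\big(\|\widetilde{V}_m\widetilde{V}^{\dagger(m)}h\|^2-\|\widetilde{V}^{\dagger(m)}h\|^2\big)\leq 0$, and you place the weak limit in $R^{\infty}(V)$ using weak closedness of each closed range $R(\widetilde{V}_n)$ instead of the identity $\widetilde{V}_m\widetilde{V}^{\dagger(m)}y=y$ employed in the paper.
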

\begin{proof} For $ m \in \mathbb{N},$   Inequality (\ref{2.7}) can be rewrite as \begin{equation*} \label{inq 2.8}
	\|\widetilde{V}_m (\xi_{m})\|^2 - \|(I_{E^{\ot {m-1}}} \ot \widetilde{V}^\dagger \widetilde{V}) (\xi_{m})\|^2  \leq  d_m \| (I_{E^{\ot {m-1}}} \ot D_{\widetilde{V}}) (\xi_{m})\|^2, \quad  \xi_{m} \in E^{\ot m}\ot \mathcal{H}. 
	\end{equation*}  Now replace  $ \xi_{m} $ by $  \widetilde{V}^{\dagger (m)}h, h \in \mathcal{H}, $ in the above inequality, we obtain
	\begin{align*}
	\| \widetilde{V}_m  \widetilde{V}^{\dagger (m)} {h} \|^2 -& \| (I_{E^{\ot {m-1}}} \ot \widetilde{V}^\dagger \widetilde{V})   \widetilde{V}^{\dagger (m)} {h} \|^2  \leq  d_m \| (I_{E^{\ot {m-1}}} \ot D_{\widetilde{V}})   \widetilde{V}^{\dagger (m)}{h} \|^2 
	\end{align*} and by using the identity $ \widetilde{V}^\dagger \widetilde{V}\widetilde{V}^{\dagger} =\widetilde{V}^{\dagger} ,$  \begin{align*}
	\| \widetilde{V}_m  \widetilde{V}^{\dagger (m)} {h} \|^2 -& \|    \widetilde{V}^{\dagger (m)} {h} \|^2  \leq  d_m \| (I_{E^{\ot {m-1}}} \ot D_{\widetilde{V}})   \widetilde{V}^{\dagger (m)}{h} \|^2 .
	\end{align*} Therefore \begin{align}\label{6.7}
	\sum_{m \geq 2 } \frac{	\| \widetilde{V}_m  \widetilde{V}^{\dagger (m)} {h} \|^2 - \|    \widetilde{V}^{\dagger (m)} {h} \|^2 }{d_m }& \leq \sum_{m\geq 2}    \| (I_{E^{\ot {m-1}}} \ot D_{\widetilde{V}})   \widetilde{V}^{\dagger (m)}{h} \|^2\\
	\nonumber &\leq  \|h\|^2,
	\end{align}  since $ (d_m)_{m\geq2} $ is positive sequence and the last inequality holds due to Lemma \ref{Lemma 2.4}.  Moreover, from the hypothesis $\displaystyle\sum_{m \geq 2}$ $\frac{1}{d_m} = \infty ,$ we conclude \begin{equation*}
	\liminf_m	\big\{	\| \widetilde{V}_m  \widetilde{V}^{\dagger (m)} {h} \|^2 -\|    \widetilde{V}^{\dagger (m)} {h} \|^2\big\} \leq 0.
	\end{equation*} Indeed, if $\liminf_m  \big\{	\| \widetilde{V}_m  \widetilde{V}^{\dagger (m)} {h} \|^2 -\|\widetilde{V}^{\dagger (m)} {h} \|^2\big\}  \geq \beta  \textgreater 0 ,$ then there is a $m_0 \in \mathbb{N} $ so that $\| \widetilde{V}_m  \widetilde{V}^{\dagger (m)} {h} \|^2 - \|    \widetilde{V}^{\dagger (m)} {h} \|^2    \geq \frac{\beta}{2} \textgreater 0 $ for $ m \geq m_0.$ Thus \begin{equation*}
	\sum_{m \geq 2 } \frac{	\| \widetilde{V}_m  \widetilde{V}^{\dagger (m)} {h} \|^2 - \|    \widetilde{V}^{\dagger (m)} {h} \|^2 }{d_m } \geq \sum_{m\geq 2} \frac{\beta}{ 2 d_m } = \infty,
	\end{equation*} which contradicts  the Inequality (\ref{6.7}). Since $\widetilde{V}^{\dagger}$ is  contraction, there exists a weakly convergent   subsequence $  \{\widetilde{V}_{m_j }\widetilde{V}^{\dagger (m_j)} h\} $ of $  \{\widetilde{V}_{m}\widetilde{V}^{\dagger (m)} h\}$ which converges weakly to $y$ for some $ y \in \mathcal{H}.$  In symbols : $  \widetilde{V}_{m_j }\widetilde{V}^{\dagger (m_j)} h  \rightharpoonup y .$ By the identity \begin{equation*}
	\widetilde{V}_{m}\widetilde{V}^{\dagger (m)}\widetilde{V}_{m_j}\widetilde{V}^{\dagger ({m_j})}h= 	 \widetilde{V}_{m_j}\widetilde{V}^{\dagger ({m_j})}h, \quad where\quad  m_j \geq m.\end{equation*}  Apply $m_j\to \infty $ to the above equation, 
	we get  $ \widetilde{V}_{m}\widetilde{V}^{\dagger (m)}y= y$ and hence $ y \in R(\widetilde{V}_{m}) ,$ for all $m \in \mathbb{N}.$ Now by Lemma \ref{Lemma 2.5}, $  h -  \widetilde{V}_{m_{j}}\widetilde{V}^{\dagger (m_{j})}h \in [\mathcal{W}]_{\widetilde{V}} .$ Since  $[\mathcal{W}]_{\widetilde{V}} $ is weakly  closed and $ h -  \widetilde{V}_{m_{j}}\widetilde{V}^{\dagger (m_{j})}h  \rightharpoonup h - y,$  $ h-y \in  [ \mathcal{W}]_{\widetilde{V}} .$ Consequently $  \mathcal{H}=[ \mathcal{W}]_{\widetilde{V}}+ R^{\infty}({V}) .$
\end{proof}
\begin{remark}
	Hereafter whenever we say that $(\sigma, V)$ satisfies the growth condition, it means it satisfies  (\ref{2.7}). 
\end{remark}
\begin{theorem}\label{TT}
	Consider a regular CB-representation $(\sigma,V)$ of $E$ on $\mathcal{H}$ so that  $\gamma({\widetilde{V}}) \geq 1 ,$ which satisfies the growth condition, then the following  hold :\begin{enumerate}
		\item $ R^{\infty}{({{V}})} $ reduces $(\sigma,V),$
		\item $(\widetilde{V}|_{E \ot {R^{\infty}({V})}})^{\dagger} = \widetilde{V}^{\dagger}|_{R^{\infty}({V})} = \widetilde{V}^{*}|_{R^{\infty}({V})} = (\widetilde{V}|_{E \ot {R^{\infty}({V})}})^{*},$
		\item  $(\sigma,V)$ is bi-regular,
		\item the restriction map 
		$	\widetilde{V}|_{({E \otimes R^{\infty}{({V})})\cap N({\widetilde{V}}})^\perp}:({E \otimes R^{\infty}{({V})})\cap N({\widetilde{V}})^{\perp}}\rightarrow R^\infty{({V})}$ is an unitary,
		\item $\mathcal{H}$ admits an orthogonal  decomposition, \begin{equation*}
		\mathcal{H} = [ \mathcal{W}]_{\widetilde{V}} \oplus R^{\infty}({{V}}).
		\end{equation*}
	\end{enumerate}
\end{theorem}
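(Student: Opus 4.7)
The plan is to reduce everything to one pivotal identity: for every $h \in R^\infty(V)$, $\wt V^* h = \wt V^\dagger h$. Corollary~\ref{invariant} then upgrades this to $\wt V^*(R^\infty(V)) \subseteq E \otimes R^\infty(V)$, which is exactly the reducing condition (1); (5) drops out of Theorem~\ref{Theorem 2.10} combined with (1); (3) follows from Theorem~\ref{Theorem 2.9}; and (2), (4) are routine packaging via Propositions~\ref{dagger} and~\ref{invert}.

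The first step extracts an isometry property of $\wt V^\dagger$ on $R^\infty(V)$ from the growth condition. Apply (\ref{2.7}) with $\xi_m = \wt V^{\dagger(m)} h$ for $h \in R^\infty(V)$. Iterated use of Corollary~\ref{invariant} puts $\wt V^{\dagger(i)} h$ in $E^{\ot i} \ot R^\infty(V)$, and since $R^\infty(V) \subseteq R(\wt V)$ and $R(\wt V^\dagger) \subseteq N(\wt V)^\perp$, the two operator collapses $(I_{E^{\ot m-1}} \ot \wt V) \wt V^{\dagger(m)} h = \wt V^{\dagger(m-1)} h$ and $(I_{E^{\ot m-1}} \ot \wt V^\dagger \wt V) \wt V^{\dagger(m)} h = \wt V^{\dagger(m)} h$ hold. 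Combined with $\wt V_m \wt V^{\dagger(m)} h = h$ from Remark~\ref{Inverse}, the growth inequality reduces to
\[
\|h\|^2 - \|\wt V^{\dagger(m)} h\|^2 \le d_m\bigl(\|\wt V^{\dagger(m-1)} h\|^2 - \|\wt V^{\dagger(m)} h\|^2\bigr).
\]
Telescoping exactly as in the proof of Theorem~\ref{Theorem 2.10}, using $\sum_{m \ge 2} 1/d_m = \infty$ together with the fact that $\wt V^\dagger$ is a contraction (Proposition~\ref{Regular}), the monotone sequence $\{\|\wt V^{\dagger(m)} h\|\}$ is pinned at $\|h\|$. In particular, $\wt V^\dagger|_{R^\infty(V)}$ is an isometry.

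The main obstacle is the next step. Feed this information back into the identity of Lemma~\ref{Lemma 2.4}: the $P_{\mathcal W}$ terms vanish since $\wt V^{\dagger(i)} h \in E^{\ot i} \ot R(\wt V)$ and $P_{\mathcal W}$ annihilates $R(\wt V)$, while $\|\wt V^{\dagger(n)} h\|^2 = \|h\|^2$ by the previous step; what remains is $\sum_{i=1}^{n} \|(I_{E^{\ot i-1}} \ot D_{\wt V}) \wt V^{\dagger(i)} h\|^2 = 0$ for every $n$. Each summand must vanish; specializing $i = 1$ gives $D_{\wt V}(\wt V^\dagger h) = 0$, equivalently $(\wt V^* \wt V - \wt V^\dagger \wt V) \wt V^\dagger h = 0$. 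Applying the Moore--Penrose identities $\wt V^\dagger \wt V \wt V^\dagger = \wt V^\dagger$ and $\wt V^* \wt V \wt V^\dagger = \wt V^*$ then collapses this to the pivotal equality $\wt V^* h = \wt V^\dagger h$. This is where the growth hypothesis genuinely bites.

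The remaining parts now assemble. For (2): the first equality is Proposition~\ref{dagger}; the middle is what was just proved; and the last holds because $\wt V^* h \in E \ot R^\infty(V)$ collapses the projection in $(\wt V|_{E \ot R^\infty(V)})^* h = P_{E \ot R^\infty(V)} \wt V^* h$. For (1): the inclusion $\wt V^*(R^\infty(V)) \subseteq E \ot R^\infty(V)$ rewrites as $V(\xi)^* R^\infty(V) \subseteq R^\infty(V)$, which together with the forward invariance $\wt V(E \ot R^\infty(V)) = R^\infty(V)$ (Theorem~\ref{reg}) and the $\sigma$-invariance of $R^\infty(V)$ shows $R^\infty(V)$ is $(\sigma, V)$-reducing; (3) is then immediate from Theorem~\ref{Theorem 2.9}. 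For (4): the isometry of $\wt V^\dagger|_{R^\infty(V)}$ paired with the bijection in Proposition~\ref{invert} exhibits the restriction as a unitary. Finally, (5) follows because (1) makes $R^\infty(V)^\perp$ an $(\sigma, V)$-invariant subspace containing $\mathcal{W} = R(\wt V)^\perp$, forcing $[\mathcal W]_{\wt V} \subseteq R^\infty(V)^\perp$; comparing with the non-orthogonal sum $\mathcal{H} = [\mathcal W]_{\wt V} + R^\infty(V)$ from Theorem~\ref{Theorem 2.10} yields equality and the desired orthogonal decomposition.
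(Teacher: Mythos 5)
Your proposal is correct and follows essentially the same route as the paper: substituting $\widetilde{V}^{\dagger(m)}h$ into the growth condition for $h\in R^{\infty}(V)$, using $\sum 1/d_m=\infty$ and the contractivity of $\widetilde{V}^{\dagger}$ to pin $\|\widetilde{V}^{\dagger(m)}h\|=\|h\|$, extracting the key identity $\widetilde{V}^{*}h=\widetilde{V}^{\dagger}h$ on $R^{\infty}(V)$, and then deducing (1)--(5) from Theorems \ref{reg}, \ref{Theorem 2.9}, \ref{Theorem 2.10} and Propositions \ref{dagger}, \ref{invert}. The only differences (telescoping the collapsed inequality directly, and obtaining $D_{\widetilde{V}}\widetilde{V}^{\dagger}h=0$ from the pinned identity of Lemma \ref{Lemma 2.4} rather than from $D_{\widetilde{V}}=0$ on $E\otimes R^{\infty}(V)$) are cosmetic.
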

\begin{proof}
	(1) By using Theorem \ref{reg} and  Corollary \ref{invariant}, $ R^{\infty}{({{V}})} $  is $(\sigma, V)$-invariant  and $\widetilde{V}^{\dagger}R^{\infty}{({{V}})}  \subseteq E \ot R^{\infty}{({{V}})} ,$ therefore  we shall check that $R^{\infty}{({{V}})}^\perp$ is $(\sigma, V)$-invariant. Let $ h \in  R^{\infty}({V}) $ and  replace $ \xi_{m} \in E^{\ot m} \ot \mathcal{H} $ by $  \widetilde{V}^{\dagger (m)} h $ in the growth condition, we get 	\begin{align*}
	\| \widetilde{V}_m  \widetilde{V}^{\dagger (m)} {h} \|^2 -& \| (I_{E^{\ot {m-1}}} \ot \widetilde{V}^\dagger \widetilde{V})   \widetilde{V}^{\dagger (m)} {h} \|^2  \leq  d_m \| (I_{E^{\ot {m-1}}} \ot D_{\widetilde{V}})   \widetilde{V}^{\dagger (m)}{h} \|^2. 
	\end{align*} By using the identity $ \widetilde{V}^\dagger \widetilde{V}\widetilde{V}^{\dagger} =\widetilde{V}^{\dagger} ,$ 	\begin{align*}
	\| \widetilde{V}_m  \widetilde{V}^{\dagger (m)} {h} \|^2 -& \|    \widetilde{V}^{\dagger (m)} {h} \|^2  \leq  d_m \| (I_{E^{\ot {m-1}}} \ot D_{\widetilde{V}})   \widetilde{V}^{\dagger (m)}{h} \|^2. 
	\end{align*} Since $ h \in  R^{\infty}({V}) $  and Remark \ref{Inverse} follows that, $ \widetilde{V}_m \widetilde{V}^{\dagger (m)} h= h $ for every $m \in \mathbb{N}.$ Therefore \begin{align*}
	\|  {h} \|^2 - \|    \widetilde{V}^{\dagger (m)} {h} \|^2  \leq  d_m \| (I_{E^{\ot {m-1}}} \ot D_{\widetilde{V}})   \widetilde{V}^{\dagger (m)}{h} \|^2 .
	\end{align*}
	
	Thus \begin{equation*}
	\sum_{m \geq 2 } \frac{ \|  {h} \|^2 - \|    \widetilde{V}^{\dagger (m)} {h} \|^2}{d_m} \leq \sum_{m \geq 2 } \| (I_{E^{\ot {m-1}}} \ot D_{\widetilde{V}})   \widetilde{V}^{\dagger (m)}{h} \|^2  \leq \|  h \|^2,
	\end{equation*} where the last inequality follows from the Lemma \ref{Lemma 2.4}. By using the hypothesis $\displaystyle \sum_{m \geq 2 }\frac{1}{d_m} =\infty ,$ we conclude \begin{equation*}
	\liminf_m \{ \|  {h} \|^2 - \|    \widetilde{V}^{\dagger (m)} {h} \|^2\} \leq 0.
	\end{equation*} Since $\widetilde{V}^{\dagger}$ is contraction, $
	\lim_m \|  \widetilde{V}^{\dagger (m)}h\|= \|h\|$ and it follows that
	\begin{equation*} 
	\|{h}\|=\lim_m \|  \widetilde{V}^{\dagger (m)}h \| \leq  \|  \widetilde{V}^{\dagger}h\| \leq \| h\|,\:
	\end{equation*} hence $\|  \widetilde{V}^{\dagger}h\| = \| h\|,  h \in  R^{\infty}({V}).$ Therefore, for   $\eta \in E \ot R^{\infty}({V}) ,$  $\widetilde{V}(\eta ) \in R^{\infty}({V})$ and  \begin{equation*}
	\|D_{\widetilde{V}}(\eta)\|^2= \|\widetilde{V}(\eta)\|^2- \|\widetilde{V}^\dagger\widetilde{V}(\eta)\|^2 =0,
	\end{equation*}
	and thus \begin{equation}\label{xy}
	\widetilde{V}^*\widetilde{V}(\eta)=\widetilde{V}^{\dagger}\widetilde{V}(\eta), \quad \eta\in E\ot R^{\infty}({V}).
	\end{equation} 
	By  Corollary \ref{invariant}, 
	$ \widetilde{V}^{\dagger}(h) \in E \ot R^{\infty}({V}), $ for  $ h \in R^{\infty}({V})$  and  replace $ \eta $ by $\widetilde{V}^{\dagger}(h) $ in  Equation (\ref{xy}), we get \begin{equation*}
	\widetilde{V}^*\widetilde{V}\widetilde{V}^{\dagger}( h ) =\widetilde{V}^{\dagger}\widetilde{V}\widetilde{V}^{\dagger}(h). 
	\end{equation*} But $ \widetilde{V}^*\widetilde{V}\widetilde{V}^{\dagger}=\widetilde{V}^*$ and $\widetilde{V}^{\dagger}\widetilde{V}\widetilde{V}^{\dagger} = \widetilde{V}^{\dagger},$ therefore $\widetilde{V}^*h=\widetilde{V}^{\dagger}h \in E\ot R^{\infty}{({V}})$ for all $ h \in R^{\infty}{({V}}).$ This shows that $R^{\infty}({{V}})$ reduces $(\sigma, V).$ 
	
	(2) By the above observation, we conclude that $\widetilde{V}^{\dagger}|_{R^{\infty}({V})} = \widetilde{V}^{*}|_{R^{\infty}({V})}.$ Moreover,\begin{equation*}
	(\widetilde{V}|_{E \ot {R^{\infty}({V})}})^{*} = P_{E \ot {R^{\infty}({V})}} \widetilde{V}^*|_{{R^{\infty}({V})}} = \widetilde{V}^*|_{{R^{\infty}({V})}}
	\end{equation*} 
	and  using  Proposition \ref{dagger},  $(\widetilde{V}|_{E \ot {R^{\infty}({V})}})^{\dagger} = \widetilde{V}^{\dagger}|_{R^{\infty}({V})}.$ The statement (2) is proved.

	(3) Since $(\sigma,V)$ is  regular  and $ R^{\infty}{({{V}})} $ reduces $(\sigma,V),$ therefore  it follows  from Theorem \ref{Theorem 2.9}, $(\sigma,V)$ is bi-regular.

	(4) From Proposition \ref{invert}, the restriction map $	\widetilde{V}|_{(E \otimes R^{\infty}{({V})})\cap N({\widetilde{V}})^{\perp}}$ is bijective. So, we need to prove\\ $\widetilde{V}|_{(E \otimes R^{\infty}{({V})})\cap N({\widetilde{V}})^{\perp}}$ is an isometric. For this purpose, let $ \eta \in {E \otimes R^{\infty}{({V})}\cap N({\widetilde{V}})^{\perp}},$    $ \widetilde{V}^{\dagger}\widetilde{V}\eta=\eta ,$ and thus from  Equation (\ref{xy}), $\|\widetilde{V}\eta\|=\|\eta\|.$  Hence $	\widetilde{V}|_{E \otimes R^{\infty}{({V})}\cap N({\widetilde{V}})^{\perp}}$ is an unitary.

	(5) Since $ R^{\infty}{({{V}})} $ reduces $(\sigma,V),$ $[ \mathcal{W}]_{\widetilde{V}}\subseteq R^{\infty}({{V}}) ^\perp $  and using  Theorem \ref{Theorem 2.10}, we have $
	\mathcal{H} = [ \mathcal{W}]_{\widetilde{V}} \oplus R^{\infty}({{V}}).$
\end{proof}
\begin{remark}
	Observe that, from the above theorem, an orthogonal decomposition of $\mathcal{H}$ may not be unique.  Because of this, if we take   $(\sigma, V)$ as hyper-dagger, then the decomposition is unique.
	
	Since $(\sigma, V_n)$ is  CB-representation of $E^{\ot n}$ on $\mathcal{H},$ $R^{\infty}{({V})}=R^{\infty}{({V}_n)}$ and by Equation (\ref{xy}),  we get
	\begin{equation*}\label{ineq2.9}
	\widetilde{V}_n^*\widetilde{V}_n(\eta)=\widetilde{V}_n^{\dagger}\widetilde{V}_n(\eta)  \quad \mbox{and} \quad \widetilde{V}_n^{\dagger}\widetilde{V}_n\eta=\eta, \quad \: \: \eta \in {(E^{\ot n} \otimes R^{\infty}{({V})})\cap N({\widetilde{V}_n})^{\perp}}, n \in \mathbb{N},
	\end{equation*} thus $\|\widetilde{V}_n\eta\|=\|\eta\|.$ Using Proposition  \ref{invert} and $(\sigma, V_n)$ is $n$-dagger, $R({\widetilde{V}^{\dagger({n})}}) = N({\widetilde{V}_{n}})^{\perp}$ and therefore the map	$\widetilde{V}_{n}|_{({E^{\ot n} \otimes R^{\infty}{({V})})\cap N({\widetilde{V}_{n}}})^\perp}:({E^{\ot n} \otimes R^{\infty}{({V})})\cap N({\widetilde{V}_{n}})^{\perp}}\rightarrow R^\infty{({V})}$ 
	is unitary, for all $n \in \mathbb{N}.$

	To prove the uniqueness, suppose   $\mathcal{H} = \mathcal{H}_{1} \oplus \mathcal{H}_{2}$  is another decomposition of $\mathcal{H}$ into reducing subspaces  such that $(\sigma, V)|_{\mathcal{H}_{1}}$ has GWS-property   and   for $n \in \mathbb{N},$ \begin{align*}
	{\widetilde{V}}_n{|}_{({{E^{\otimes{n}} \otimes \mathcal{H}_{2})\cap N(\widetilde{V}_{n})^{\perp}}}}:{({{E^{\otimes{n}} \otimes \mathcal{H}_{2})\cap N(\widetilde{V}_{n})^{\perp}}}}\rightarrow \mathcal{H}_{2} 
	\end{align*} is unitary. Then  $\mathcal{H}_{1} = [\widetilde{W}]_{\widetilde{V}},$ where $ \widetilde{W} $ is wandering subspace for $(\sigma,V)|_{\mathcal{H}_{1}}.$
	Note that  $\widetilde{W} $ is uniquely determined by $\mathcal{H}_1,$ $ \widetilde{W} = N(\widetilde{V}^*|_{\mathcal{H}_{1}}),$ therefore 
	$\widetilde{W} \subseteq \mathcal{W}$ and hence $[\widetilde{W}]_{\widetilde{V}} \subseteq[ \mathcal{W}]_{\widetilde{V}}.$
	
	For each $ n \in \mathbb{N},$ {\begin{equation*}
		\mathcal{H}_{2}=\widetilde{V}_{n}(({{{E^{\otimes{n}} \otimes \mathcal{H}_{2})\cap N(\widetilde{V}_{n})^{\perp}}}}) \subseteq \widetilde{V}_{n}(E^{\ot n} \ot \mathcal{H}),
		\end{equation*}}
	clearly $\mathcal{H}_{2 } \subseteq R^{\infty}({{V}}).$ Moreover, \begin{equation*}
	\mathcal{H} = \mathcal{H}_{1} \oplus \mathcal{H}_{2} \subseteq [ \mathcal{W}]_{\widetilde{V}} \oplus R^{\infty}({{V}})=\mathcal{H},
	\end{equation*} which proves uniqueness.
\end{remark}
Let $(\sigma, V)$ be a CB-representation of $E$ on $\mathcal{H}$  so that $\widetilde{V}$ is expansive, i.e., $\| \xi\| \leq\| \wt{V}(\xi )\|,  \: \xi \in E \ot \mathcal{H}.$ Then  $N(\widetilde{V})=0$  and $ \gamma(\wV)\geq1,$  we get the following Wold-type decomposition.
\begin{corollary}
	Consider a  CB-representation $(\sigma,V)$ of $E$ on $\mathcal{H}$ so that $\widetilde{V}$ is expansive and satisfies the growth condition.  Then $\mathcal{H}$ has an unique orthogonal  decomposition, \begin{equation*}
	\mathcal{H} = [ \mathcal{W}]_{\widetilde{V}} \oplus R^{\infty}({{V}})
	\end{equation*} such that $(\sigma,V)|_{R^{\infty}{({V})}}$  is  isometric as well as fully  coisometric. That is, $(\sigma,V)$ admits Wold-type decomposition.
\end{corollary}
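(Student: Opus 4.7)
The plan is to exhibit this as a direct corollary of Theorem \ref{TT}. First I would verify that the hypotheses of that theorem are in force. Expansiveness means $\|\widetilde{V}(\xi)\| \geq \|\xi\|$ for every $\xi \in E \otimes \mathcal{H}$; this forces $N(\widetilde{V}) = \{0\}$ and $\gamma(\widetilde{V}) \geq 1$, and in particular $R(\widetilde{V})$ is closed. The trivial inclusion $\{0\} = N(\widetilde{V}) \subseteq E \otimes R^{\infty}(V)$ matches condition (1) of Theorem \ref{cc}, so $(\sigma,V)$ is regular. The remaining hypotheses of Theorem \ref{TT}, namely $\gamma(\widetilde{V}) \geq 1$ and the growth condition, hold by assumption.

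Next I would simply read off the existence part. Part (5) of Theorem \ref{TT} yields the orthogonal decomposition $\mathcal{H} = [\mathcal{W}]_{\widetilde{V}} \oplus R^{\infty}(V)$ with $\mathcal{W} = \mathcal{H} \ominus \widetilde{V}(E \otimes \mathcal{H})$. Since $N(\widetilde{V}) = \{0\}$, the domain in part (4) collapses to all of $E \otimes R^{\infty}(V)$, so $\widetilde{V}|_{E \otimes R^{\infty}(V)} \colon E \otimes R^{\infty}(V) \to R^{\infty}(V)$ is unitary. Because $R^{\infty}(V)$ is $(\sigma,V)$-reducing by part (1) of Theorem \ref{TT}, this restriction coincides with the bounded operator associated to $(\sigma,V)|_{R^{\infty}(V)}$ via Lemma \ref{MSL}; unitarity then translates exactly into the statement that $(\sigma,V)|_{R^{\infty}(V)}$ is simultaneously isometric and fully co-isometric.

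For uniqueness I would run the standard Wold argument. Let $\mathcal{H} = \mathcal{H}_1 \oplus \mathcal{H}_2$ be any competing reducing decomposition in which $(\sigma,V)|_{\mathcal{H}_1}$ enjoys the GWS-property with wandering subspace $\widetilde{\mathcal{W}}$, and $(\sigma,V)|_{\mathcal{H}_2}$ is both isometric and fully co-isometric. Iterating full co-isometry gives $\mathcal{H}_2 = \widetilde{V}_n(E^{\otimes n} \otimes \mathcal{H}_2) \subseteq R(\widetilde{V}_n)$ for every $n$, whence $\mathcal{H}_2 \subseteq R^{\infty}(V)$. Conversely, $\mathcal{H}_1$ is $\widetilde{V}$-invariant and orthogonal to $\mathcal{H}_2 \supseteq \widetilde{V}(E \otimes \mathcal{H}_2)$, so the defining orthogonality $\widetilde{\mathcal{W}} \perp \widetilde{V}(E \otimes \mathcal{H}_1)$ upgrades to $\widetilde{\mathcal{W}} \perp \widetilde{V}(E \otimes \mathcal{H})$, giving $\widetilde{\mathcal{W}} \subseteq \mathcal{W}$ and therefore $\mathcal{H}_1 = [\widetilde{\mathcal{W}}]_{\widetilde{V}} \subseteq [\mathcal{W}]_{\widetilde{V}}$. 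Compared against the orthogonal decomposition this forces $\mathcal{H}_1 = [\mathcal{W}]_{\widetilde{V}}$ and $\mathcal{H}_2 = R^{\infty}(V)$.

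The only non-mechanical step is this last uniqueness argument; in particular, the identification $\widetilde{\mathcal{W}} \subseteq \mathcal{W}$ must exploit both the $\widetilde{V}$-invariance of $\mathcal{H}_2$ and the orthogonal splitting, but nothing deeper than Theorem \ref{TT} is required. Everything else is bookkeeping — translating expansiveness into the regularity plus reduced minimum modulus hypotheses, and translating unitarity of $\widetilde{V}|_{E \otimes R^{\infty}(V)}$ into the isometric/fully co-isometric conclusion via Lemma \ref{MSL}.
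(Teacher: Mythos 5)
Your proposal is correct and follows essentially the same route as the paper: expansiveness gives $N(\widetilde{V})=\{0\}$, closed range and $\gamma(\widetilde{V})\geq 1$ (hence regularity trivially), so Theorem \ref{TT} yields the decomposition, with parts (1) and (4) giving that $(\sigma,V)\vert_{R^{\infty}(V)}$ is isometric and fully co-isometric via Lemma \ref{MSL}. Your uniqueness argument is exactly the paper's preceding remark specialized to the injective case, where the condition that $\widetilde{V}_n\vert_{(E^{\otimes n}\otimes\mathcal{H}_2)\cap N(\widetilde{V}_n)^{\perp}}$ be unitary reduces to full co-isometry plus isometry of the restriction, so no gap remains.
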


\begin{corollary}
	Consider a regular CB-representation $(\sigma,V)$ of $E$ on $\mathcal{H}$ so that  $\gamma({\widetilde{V}}) \geq 1 ,$ which satisfies the growth condition, then  \begin{equation*}
	[ \mathcal{W}]_{\widetilde{V}} = \displaystyle\bigvee_{n \in \mathbb{N}_0}N(({\widetilde{V}}^{\dagger})^{n}) =R^{\infty}({V}^{\dagger *})^{\perp}.
	\end{equation*}
\end{corollary}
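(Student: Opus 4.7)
The second equality $\bigvee_{n \in \mathbb{N}_{0}} N(\widetilde{V}^{\dagger(n)}) = R^{\infty}(V^{\dagger *})^{\perp}$ is the formal orthogonality identity already recorded just before Corollary~\ref{CC}: from $R(\widetilde{V}^{\dagger(n)*})^{\perp} = N(\widetilde{V}^{\dagger(n)})$ and the definition $R^{\infty}(V^{\dagger *}) = \bigcap_{n} R(\widetilde{V}^{\dagger(n)*})$, taking orthogonal complements produces exactly $\bigvee_{n} N(\widetilde{V}^{\dagger(n)})$. Attention therefore focuses on the first equality $[\mathcal{W}]_{\widetilde{V}} = \bigvee_{n} N(\widetilde{V}^{\dagger(n)})$.

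The inclusion $\bigvee_{n} N(\widetilde{V}^{\dagger(n)}) \subseteq [\mathcal{W}]_{\widetilde{V}}$ follows at once from Theorem~\ref{2.1}(1), which places $N(\widetilde{V}^{\dagger(n)})$ inside $\bigvee_{i=0}^{n-1} \widetilde{V}_{i}(E^{\otimes i} \otimes \mathcal{W}) \subseteq [\mathcal{W}]_{\widetilde{V}}$; passing to the closed span over $n$ yields the claim. For the reverse inclusion $[\mathcal{W}]_{\widetilde{V}} \subseteq \bigvee_{n} N(\widetilde{V}^{\dagger(n)})$, the plan is to prove the sharper containment
\begin{equation*}
\widetilde{V}_{i}(E^{\otimes i} \otimes \mathcal{W}) \subseteq N(\widetilde{V}^{\dagger(i+1)}), \qquad i \in \mathbb{N}_{0},
\end{equation*}
by induction on $i$, and then take closed linear spans. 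The base case $i = 0$ is immediate since $\mathcal{W} = R(\widetilde{V})^{\perp} = N(\widetilde{V}^{\dagger})$. For the inductive step, with $w \in E^{\otimes i} \otimes \mathcal{W}$, I would use $\widetilde{V}^{\dagger(i+1)} = (I_{E} \otimes \widetilde{V}^{\dagger(i)}) \widetilde{V}^{\dagger}$, $\widetilde{V}_{i} = \widetilde{V}(I_{E} \otimes \widetilde{V}_{i-1})$, and $\widetilde{V}^{\dagger}\widetilde{V} = P_{N(\widetilde{V})^{\perp}}$ to obtain
\begin{equation*}
\widetilde{V}^{\dagger(i+1)} \widetilde{V}_{i}(w) = (I_{E} \otimes \widetilde{V}^{\dagger(i)})\, P_{N(\widetilde{V})^{\perp}}\, (I_{E} \otimes \widetilde{V}_{i-1})(w),
\end{equation*}
argue that $P_{N(\widetilde{V})^{\perp}}$ acts trivially on $(I_{E} \otimes \widetilde{V}_{i-1})(w)$, so the expression reduces to $(I_{E} \otimes \widetilde{V}^{\dagger(i)} \widetilde{V}_{i-1})(w)$, and conclude that this vanishes by applying the induction hypothesis to each simple-tensor component (using that $\widetilde{V}_{i-1}$ sends $E^{\otimes(i-1)} \otimes \mathcal{W}$ into $N(\widetilde{V}^{\dagger(i)})$).

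The main obstacle is justifying the triviality of the projection, i.e., the key inclusion $E \otimes [\mathcal{W}]_{\widetilde{V}} \subseteq N(\widetilde{V})^{\perp}$. This requires three ingredients to be combined. First, Theorem~\ref{TT}(5) provides the Wold-type orthogonal decomposition $\mathcal{H} = [\mathcal{W}]_{\widetilde{V}} \oplus R^{\infty}(V)$, identifying $[\mathcal{W}]_{\widetilde{V}}$ with $R^{\infty}(V)^{\perp}$. Second, regularity together with Theorem~\ref{cc} forces $N(\widetilde{V}) \subseteq E \otimes R^{\infty}(V)$. Third, the orthogonal splitting $\mathcal{H} = R^{\infty}(V) \oplus R^{\infty}(V)^{\perp}$ lifts to $E \otimes \mathcal{H} = (E \otimes R^{\infty}(V)) \oplus (E \otimes R^{\infty}(V)^{\perp})$, which is legitimate because $R^{\infty}(V)$ reduces $(\sigma, V)$ by Theorem~\ref{TT}(1) and is therefore $\sigma(\mathcal{B})$-invariant. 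Assembling these three points yields $E \otimes R^{\infty}(V)^{\perp} \subseteq N(\widetilde{V})^{\perp}$, which is precisely the needed fact. With this in hand, the induction closes cleanly and the reverse inclusion is established, finishing the corollary.
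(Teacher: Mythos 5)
Your argument is correct, but it takes a genuinely different route from the paper's. The paper proves the first equality by a duality substitution: Theorem \ref{TT}(3) gives bi-regularity, hence $\widetilde{V}^{\dagger*}$ is itself a regular representation whose wandering subspace is again $\mathcal{W}$ (because $N(\widetilde{V}^{\dagger})=R(\widetilde{V})^{\perp}$), and applying Corollary \ref{CC}(2) to $\widetilde{V}^{\dagger*}$ together with $(\widetilde{V}^{\dagger*})^{\dagger*}=\widetilde{V}$ converts $R^{\infty}(V^{\dagger*})^{\perp}$ directly into $\bigvee_{i}\widetilde{V}_{i}(E^{\otimes i}\otimes\mathcal{W})=[\mathcal{W}]_{\widetilde{V}}$; the identification with $\bigvee_{n}N(\widetilde{V}^{\dagger(n)})$ is then the recorded duality relation, exactly as you observe. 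You instead prove $[\mathcal{W}]_{\widetilde{V}}=\bigvee_{n}N(\widetilde{V}^{\dagger(n)})$ head-on: one inclusion from Theorem \ref{2.1}(1), the other by the induction $\widetilde{V}_{i}(E^{\otimes i}\otimes\mathcal{W})\subseteq N(\widetilde{V}^{\dagger(i+1)})$, whose crux is the containment $E\otimes[\mathcal{W}]_{\widetilde{V}}\subseteq N(\widetilde{V})^{\perp}$; your derivation of that containment from Theorem \ref{TT}(5) (so $[\mathcal{W}]_{\widetilde{V}}=R^{\infty}({V})^{\perp}$), the regularity consequence $N(\widetilde{V})\subseteq E\otimes R^{\infty}({V})$ noted after Theorem \ref{cc}, and the reducing property of $R^{\infty}({V})$ from Theorem \ref{TT}(1) (which makes $I_{E}\otimes P_{R^{\infty}({V})}$ a legitimate orthogonal splitting of $E\otimes\mathcal{H}$) is sound, and the algebra $\widetilde{V}^{\dagger(i+1)}=(I_{E}\otimes\widetilde{V}^{\dagger(i)})\widetilde{V}^{\dagger}$, $\widetilde{V}^{\dagger}\widetilde{V}=P_{N(\widetilde{V})^{\perp}}$ closes the induction as you describe. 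The paper's route buys brevity --- one substitution into an already proved formula --- at the cost of having to know that the dual object $\widetilde{V}^{\dagger*}$ is regular (precisely where bi-regularity enters) and that its associated data transform correctly; your route avoids the Cauchy-dual substitution entirely and makes the underlying orthogonality mechanism ($[\mathcal{W}]_{\widetilde{V}}\perp R^{\infty}({V})\supseteq$ the subspace containing $N(\widetilde{V})$) explicit, at the cost of an extra induction and of invoking parts (1) and (5) of Theorem \ref{TT} rather than only part (3); both arguments ultimately rest on that theorem.
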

\begin{proof}
	From statement (3) in Theorem \ref{TT},  $(\sigma,V)$ is bi-regular, and thus $\widetilde{V}^{\dagger *}$ is regular. By using  Corollary \ref{CC}, we have \begin{equation*}
	R^{\infty}({V})^{\perp} = \displaystyle\bigvee_{i=0}^{n-1} \big\{{\widetilde{V}^{\dagger*(i)}}(x_{i} )\: : \: x_{i} \in E^{\ot i} \ot \mathcal{W} \big\}.
	\end{equation*} Replace $\widetilde{V}$ by $\widetilde{V}^{\dagger *}$ in the above equation, and we get \begin{align*}
	R^{\infty}({V}^{\dagger *})^{\perp}	&= \displaystyle\bigvee_{i=0}^{n-1} \big\{{({\widetilde{V}^{\dagger *}})^{\dagger*(i)}}(x_{i} )\: : \: x_{i} \in E^{\ot i} \ot \mathcal{W} \big\}\\& = \displaystyle\bigvee_{i=0}^{n-1} \big\{ \widetilde{V}_{i}(x_{i})\: : \:  x_{i} \in E^{\ot i}\ot \mathcal{W} \big \}  = [ \mathcal{W} ]_{\widetilde{V}}.\qedhere
	\end{align*}
\end{proof}

Let $(\sigma,V)$ be a  CB-representation of $E$ on $\mathcal{H}$ and let $\mathcal{K}$ be a $(\sigma, V)$-invariant subspace of $\mathcal{H}.$ Then we get a wandering subspace   $N(\wV^*|_{\mathcal{K}}) =\mathcal{K}\ominus \wt{V}(E \ot \mathcal{H})$ for $(\sigma,V).$ On the other hand, if we start with a wandering subspace  $\mathcal{W}$  for $(\sigma,V),$ then $\mathcal{K}= \displaystyle\bigvee_{n \in \mathbb{N}_0}\wV_{n}(E^{\ot n}\ot \mathcal{W})$ is $(\sigma,V)$-invariant. In fact  $\mathcal{K}$ is the smallest  $(\sigma, V)$-invariant subspace which contains $\mathcal{W}.$ In this case $ \mathcal{W}=N(\wV^*|_{\mathcal{K}}).$ Indeed, \begin{align*}
N(\wV^*|_{\mathcal{K}})&=\mathcal{K}\ominus \wV(E \ot \mathcal{K})=\displaystyle\bigvee_{n \in \mathbb{N}_0}\wV_{n}(E^{\ot n}\ot \mathcal{W})\ominus \wV(E \ot \displaystyle\bigvee_{n\in \mathbb{N}_0}\wV_{n}(E^{\ot n}\ot \mathcal{W}))\\&= \displaystyle\bigvee_{n\in \mathbb{N}_0}\wV_{n}(E^{\ot n}\ot \mathcal{W})\ominus \displaystyle\bigvee_{n\in \mathbb{N}}\wV_{n}(E^{\ot n}\ot \mathcal{W})= \mathcal{W}.
\end{align*} Hence, an invariant subspace $\mathcal{K}$ determines the wandering subspace $\mathcal{W}$ in a unique way. This leads to the conclusion that there are one-to-one correspondences between the set of all $(\sigma, V)$-invariant subspaces of $\mathcal{H}$ and the set of all wandering subspaces of $\mathcal{H}$. This conclusion is the refinement of \cite[Theorem 2.4]{HV19} and \cite[Theorem 2.1]{O05}.
\begin{definition} Consider a CB-representation $(\sigma, V)$ of $E$ on $\mathcal{H}.$ If $$\bigcap_{n \in \mathbb{N}_{0}}\wt{V}_n(E^{\otimes n} \otimes \mathcal{H})=\{0\},$$ then we say $(\sigma, V)$    is   {\em analytic (pure)} (cf. \cite{HV19}).
\end{definition}
\begin{corollary}\label{invariant 2}
	Consider an  analytic, regular CB-representation $(\sigma,V)$ be  of $E$ on $\mathcal{H}$ such that  $\gamma({\widetilde{V}}) \geq 1 $ and satisfies  the growth condition. Let $\mathcal{K}$ be a $(\sigma,V)$-invariant subspace. Then, a wandering subspace $\mathcal{W}$ is present in such a way that \begin{equation*}
	\mathcal{K}=[\mathcal{W}]_{\widetilde{V}}.
	\end{equation*}
\end{corollary}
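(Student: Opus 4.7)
The plan is to apply Theorem \ref{TT} to the restricted representation $(\sigma, V)|_{\mathcal{K}}$. Set $\widetilde{V}' := \widetilde{V}|_{E \otimes \mathcal{K}} \colon E \otimes \mathcal{K} \to \mathcal{K}$, which is well defined by $(\sigma,V)$-invariance of $\mathcal{K}$, and take $\mathcal{W} := \mathcal{K} \ominus \widetilde{V}(E \otimes \mathcal{K})$ as the candidate wandering subspace.

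The key preliminary observation is that the hypotheses force $N(\widetilde{V}) = \{0\}$: by Theorem \ref{cc}, regularity gives $N(\widetilde{V}) \subseteq E \otimes R^{\infty}(V)$, and analyticity gives $R^{\infty}(V) = \{0\}$. Hence $\widetilde{V}$ is injective, $\widetilde{V}^{\dagger}\widetilde{V} = I_{E \otimes \mathcal{H}}$, and $\gamma(\widetilde{V}) \geq 1$ is equivalent to the bounded-below inequality $\|\widetilde{V}\xi\| \geq \|\xi\|$ for all $\xi \in E \otimes \mathcal{H}$. This bounded-below property restricts trivially to $E \otimes \mathcal{K}$, so $\widetilde{V}'$ is injective with closed range and $\gamma(\widetilde{V}') \geq 1$. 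Regularity of $(\sigma,V)|_{\mathcal{K}}$ now reduces to verifying condition $(1)$ of Theorem \ref{cc} for $\widetilde{V}'$, which is immediate because $N(\widetilde{V}') = \{0\}$. The growth condition also transfers: using $(\widetilde{V}')^{\dagger}\widetilde{V}' = I_{E \otimes \mathcal{K}}$, for $\xi_m \in E^{\otimes m} \otimes \mathcal{K}$ the restricted inequality becomes
\begin{equation*}
\|\widetilde{V}_m \xi_m\|^2 \leq d_m\bigl(\|(I_{E^{\otimes (m-1)}} \otimes \widetilde{V})\xi_m\|^2 - \|\xi_m\|^2\bigr) + \|\xi_m\|^2,
\end{equation*}
which is just the growth condition for $(\sigma,V)$ applied to the element $\xi_m$ regarded as living in $E^{\otimes m} \otimes \mathcal{H}$.

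Applying Theorem \ref{TT}(5) to $(\sigma,V)|_{\mathcal{K}}$ yields
\begin{equation*}
\mathcal{K} = [\mathcal{W}]_{\widetilde{V}'} \oplus R^{\infty}(V|_{\mathcal{K}}).
\end{equation*}
Since $R^{\infty}(V|_{\mathcal{K}}) = \bigcap_{n}\widetilde{V}_n(E^{\otimes n} \otimes \mathcal{K}) \subseteq \bigcap_{n}\widetilde{V}_n(E^{\otimes n} \otimes \mathcal{H}) = R^{\infty}(V) = \{0\}$ by analyticity, we obtain $\mathcal{K} = [\mathcal{W}]_{\widetilde{V}'}$. Finally, $\mathcal{W} \subseteq \mathcal{K}$ together with invariance of $\mathcal{K}$ gives $\widetilde{V}_n(E^{\otimes n} \otimes \mathcal{W}) = \widetilde{V}'_n(E^{\otimes n} \otimes \mathcal{W})$ for every $n$, so $[\mathcal{W}]_{\widetilde{V}'} = [\mathcal{W}]_{\widetilde{V}}$, completing the proof.

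The step I expect to require the most care is the verification that the hypotheses (regularity, $\gamma \geq 1$, closed range, growth condition) pass from $\mathcal{H}$ to the restriction on $\mathcal{K}$. Without the observation that analyticity plus regularity forces $N(\widetilde{V}) = \{0\}$, the restricted Moore--Penrose inverse $(\widetilde{V}')^{\dagger}$ could differ from $\widetilde{V}^{\dagger}|_{\mathcal{K}}$ and the growth condition would not transfer term-by-term; collapsing both operators to the identity on $N(\cdot)^{\perp} = $ everything is what makes the argument clean.
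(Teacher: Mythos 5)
Your proposal is correct and follows essentially the same route as the paper: restrict $(\sigma,V)$ to $\mathcal{K}$, apply Theorem \ref{TT} to the restriction, and use analyticity to force $R^{\infty}(V|_{\mathcal{K}})=\{0\}$, so that $\mathcal{K}=[\mathcal{W}]_{\widetilde{V}}$ with $\mathcal{W}=\mathcal{K}\ominus\widetilde{V}(E\otimes\mathcal{K})$. If anything, your write-up is more complete than the paper's, which merely asserts that $(\sigma,V)|_{\mathcal{K}}$ is regular and satisfies the growth condition; your observation that regularity together with analyticity forces $N(\widetilde{V})=\{0\}$, hence $\widetilde{V}^{\dagger}\widetilde{V}=I_{E\otimes\mathcal{H}}$ and a clean term-by-term transfer of the hypotheses to the restriction, is precisely the justification the paper leaves implicit.
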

\begin{proof}
	Since $\mathcal{K}$ is $(\sigma,V)$-invariant subspace,   $(\sigma, V)|_{\mathcal{K}}$ is regular and satisfies the growth condition.  Then by Theorem \ref{TT}, $\mathcal{K}$ has an orthogonal decomposition
	\begin{equation*}
	\mathcal{K} = [ \mathcal{W}]_{\widetilde{V}} \oplus R^{\infty}({{V}|_{\mathcal{K}}}),
	\end{equation*}
	for some wandering subspace $\mathcal{W}$ and $R^{\infty}({{V}|_{\mathcal{K}}})=\displaystyle\bigcap_{n \in \mathbb{N}_{0}} \wt{V}_n(E^{\ot n} \ot \mathcal{K}),$ in fact $\mathcal{W}=N(\wV^*|_{\mathcal{K}}).$ Since $(\sigma, V)$ is analytic, $(\sigma, V)|_{\mathcal{K}}$ is also analytic, that is, $R^{\infty}({{V}|_{\mathcal{K}}})=0.$ Hence $\mathcal{K} = [ \mathcal{W}]_{\widetilde{V}}.$ \end{proof}

\begin{corollary}
	Consider a concave CB-representation $(\sigma, V)$ of $E$ on $\mathcal{H}.$ Then $(\sigma, V)$ admits Wold-type decomposition. That is, \begin{equation*}
	\mathcal{H} = [ \mathcal{W}]_{\widetilde{V}} \oplus R^{\infty}({{V}}).
	\end{equation*} such that  $(\sigma, V)|_{R^{\infty}({{V}})}$ is isometric and co-isometric.
\end{corollary}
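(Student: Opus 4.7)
The plan is to derive the corollary from Theorem \ref{TT}. To apply that theorem one must verify, for the concave CB-representation $(\sigma,V)$, three hypotheses: (a) regularity of $(\sigma,V)$, (b) $\gamma(\widetilde{V})\ge 1$, and (c) the growth condition (\ref{2.7}) with $\sum_{m\ge 2}1/d_m=\infty$.

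The cornerstone is to prove that concavity forces $\widetilde{V}$ to be expansive, i.e.\ $\|\widetilde{V}(\xi)\|\ge \|\xi\|$ for every $\xi \in E\otimes \mathcal{H}$. I would model this on the classical Hilbert-space argument, where substituting iterates into the concavity inequality makes $\|T^{n+1}x\|^2-\|T^nx\|^2$ nonincreasing, and nonnegativity of norms then forces $\|Tx\|\ge\|x\|$. In the correspondence setting I would apply Lemma \ref{L1} to vectors of the form $\xi_k=\zeta^{\otimes(k-1)}\otimes\xi$ (where $\zeta\in E$ is chosen with $\phi(\langle\zeta,\zeta\rangle)=I$ when such a $\zeta$ is available, or via an approximate-unit argument in general). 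The inequality
\[
\|\widetilde{V}_k(\xi_k)\|^2-\|\xi_k\|^2 \leq k\bigl(\|(I_{E^{\otimes(k-1)}}\otimes\widetilde{V})(\xi_k)\|^2-\|\xi_k\|^2\bigr)
\]
combined with $\|\widetilde{V}_k(\xi_k)\|^2\ge 0$ cannot be sustained as $k\to\infty$ if the right-hand side is strictly negative, forcing the desired expansiveness. Once this is in hand, $N(\widetilde{V})=\{0\}$ and $R(\widetilde{V})$ is closed, so $\gamma(\widetilde{V})=\inf\|\widetilde{V}\xi\|/\|\xi\|\ge 1$, establishing (b); and (a) follows because $N(\widetilde{V}_n)=\{0\}$ for every $n$ renders the kernel-inclusions of Theorem \ref{cc} vacuous.

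For (c), Lemma \ref{L1} already supplies $\|\widetilde{V}_k(\xi_k)\|^2\le \|\xi_k\|^2+k(\|(I_{E^{\otimes(k-1)}}\otimes\widetilde{V})(\xi_k)\|^2-\|\xi_k\|^2)$. Since $N(\widetilde{V})=\{0\}$ implies $\widetilde{V}^{\dagger}\widetilde{V}=I_{E\otimes\mathcal{H}}$, the term $\|(I_{E^{\otimes(k-1)}}\otimes\widetilde{V}^{\dagger}\widetilde{V})(\xi_k)\|^2$ appearing in (\ref{2.7}) collapses to $\|\xi_k\|^2$, so Lemma \ref{L1} is exactly (\ref{2.7}) with $d_k=k$; because $\sum_{k\ge 2}1/k=\infty$, the summability hypothesis is met.

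Theorem \ref{TT} now yields the orthogonal decomposition $\mathcal{H}=[\mathcal{W}]_{\widetilde{V}}\oplus R^{\infty}(V)$. To finish, I invoke part (4) of that theorem: with $N(\widetilde{V})=\{0\}$, the restriction $\widetilde{V}|_{E\otimes R^{\infty}(V)}\colon E\otimes R^{\infty}(V)\to R^{\infty}(V)$ is unitary, so via Lemma \ref{MSL} the restricted representation $(\sigma,V)|_{R^{\infty}(V)}$ is both isometric and fully co-isometric, completing the Wold-type decomposition. The single delicate point in this outline is the very first step---extracting expansiveness of $\widetilde{V}$ from the correspondence-level concavity---because tensor norms in Hilbert $C^*$-modules do not factor as cleanly as in a Hilbert-space tensor product, so the scalar iteration must be adapted with care to the module structure.
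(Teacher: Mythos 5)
Your proposal follows essentially the same route as the paper: use Lemma \ref{L1} together with positivity of $\|\widetilde{V}_k(\xi_k)\|^2$ to force expansivity of $\widetilde{V}$ (hence closed range, $N(\widetilde{V})=\{0\}$, regularity, $\gamma(\widetilde{V})\ge 1$ and $\widetilde{V}^{\dagger}\widetilde{V}=I_{E\otimes\mathcal{H}}$), recognize Lemma \ref{L1} as the growth condition (\ref{2.7}) with $d_k=k$, and conclude via Theorem \ref{TT}. The only cosmetic difference is the expansivity step: the paper deduces $\frac{k-1}{k}\|\zeta\|^2\le\|(I_{E^{\otimes(k-1)}}\otimes\widetilde{V})(\zeta)\|^2$ for arbitrary $\zeta\in E^{\otimes k}\otimes\mathcal{H}$ and lets $k\to\infty$, rather than evaluating on special tensors $\zeta^{\otimes(k-1)}\otimes\xi$, but both are the same positivity-plus-divergence device, so the arguments coincide in substance.
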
 
\begin{proof} By using the Inequality (\ref{CON}), 
	we obtain $$\frac{n-1}{n} \| \zeta\|^2 \leq\|(I_{E^{\otimes (n-1)}} \otimes \wt{V})(\zeta )\|^2, $$   for $\zeta \in E^{\ot n} \otimes \mathcal{H}, n \in \mathbb{N}$ and thus  $\frac{n-1}{n} \| \eta_{n-1} \ot \xi\|^2 \leq  \| \eta_{n-1} \ot \wt{V}(\xi)\|^2 \leq\|  \wt{V}(\xi )\|^2,  \xi \in E \ot \mathcal{H},\eta_{n-1} \in E^{\otimes n-1}  $ with $\|\eta_{n-1}\| \leq 1.$   It follows that  $\frac{n-1}{n} \|  \xi\|^2 \leq\|  \wt{V}(\xi )\|^2$ (See \cite[Lemma 2.2]{HV19}). This shows that  $\| \xi\| \leq\| \wt{V}(\xi )\|,  \: \xi \in E \ot \mathcal{H}.$   It follows that  $\widetilde{V}$ is left invertible,  MPI 	$\widetilde{V}^\dagger =  ({\widetilde{V}^*\widetilde{V}})^{-1}\widetilde{V}^*$ is the left inverse of $\widetilde{V},$ i.e., $\widetilde{V}^{\dagger}\widetilde{V}=I_{E \ot \mathcal{H}}.$ This makes it possible to write the Inequality (\ref{CON}) as \begin{equation*}
	\|\wt{V}_n(\zeta)\|^2 \leq \|(I_{E^{\ot (n-1)}}\ot \widetilde{V}^{\dagger}\widetilde{V}) \zeta\|^2 + n(\|(I_{E^{\otimes (n-1)}} \otimes \wt{V})(\zeta)\|^2-\|(I_{E^{\ot (n-1)}}\ot \widetilde{V}^{\dagger}\widetilde{V}) \zeta \|^2), \quad  \zeta \in E^{\otimes n}\ot \mathcal{H}
	\end{equation*}  and $ n \in \mathbb{N}.$ 	Note that $\| \xi\| \leq\| \wt{V}(\xi )\|,  \: \xi \in E \ot \mathcal{H},$   \begin{align*}
	\frac{\|\wV(\xi)\|}{\dis(\xi,N(\wV))}\geq\frac{\|\wV(\xi)\|}{ \|\xi\|} \geq 1,
	\end{align*} hence $ \gamma(\wV)\geq1.$ Thus by Theorem \ref{TT}, $(\sigma, V)$ admits  Wold-type decomposition.
\end{proof}
\section{ Contraction Intertwine With Left Invertible Covariant Representation }\label{section 7}
Bercovici, Douglas, and  Foias in \cite{BRC} proved that if a contraction $A$, defined on a Hilbert valued Hardy space $H^2_{\mathcal{E}}(\mathbb{D})$, commute with shift operator $S$ on $H^2_{\mathcal{E}}(\mathbb{D})$ and ${A^{*}}^{n}|_{{P_{S}}} \to 0$  strongly for $n \to \infty$  then  so is ${A^{*}}^{n} \to 0$  strongly for $n \to \infty ,$  where ${P_{S}}$  is projection on  $N(S^*).$  After that, S.Sarkar in \cite{SS21} proved the above result for the pure isometry case. At the beginning of this section, we define some notations and recall some definitions.

Consider a CB-representation $(\sigma, V)$  of $E$ on $\mathcal{H}$ and $\widetilde{V}$ has left inverse. Define $\wt{V}': E \otimes \mathcal{H} \longrightarrow \mathcal{H}$ by  $$\wt{V}':=\wt{V}(\wt{V}^*\wt{V})^{-1}.$$ 
It is easy to see that  $\wt{V}^*\wt{V}(\phi(a) \otimes I_{\mathcal{H}})=(\phi(a) \otimes I_{\mathcal{H}})\wt{V}^*\wt{V}$ for each $a \in \mathcal{B}$ and hence $\wt{V}'(\phi(a) \otimes I_{\mathcal{H}})=\sigma(a)\wt{V}',$ where $\phi$ is  the left action on $E.$ Then $(\sigma, V')$ is a CB-representation  of $E$ on $\mathcal{H},$  where $V': E \rightarrow B(\mathcal{H})$ is defined by $\wt{V}'(\xi \otimes h)=V'(\xi)h, \xi \in E, h \in \mathcal{H},$ and it is called  {\em Cauchy dual} (cf. \cite{HV19}) of $(\sigma, V).$

\begin{notation}
	$R^{\infty}(V'):=\displaystyle\bigcap_{n \in \mathbb{N}_{0}}\wt{V}'_n(E^{\otimes n} \otimes \mathcal{H}) \:\:\mbox{and}\:\:  \mathcal{W}':=\mbox{N} (\wt{V}'^*).$
\end{notation}
Observe that $\mathcal{W'}=\mbox{N}(\wt{V}^*)=\mathcal{W}.$
\begin{definition}
	Consider a  CB-representation $(\sigma, V)$ of ${E}$ on $\mathcal{H}.$ A bounded linear operator $A: \mathcal{H}\to \mathcal{H}$ is said to {\rm intertwine} $(\sigma, V)$ if it satisfies the  following 
	\begin{align*}
	AV(\xi)h=V(\xi)Ah \hspace{0.7cm}\mbox{and} \hspace{0.7cm}A \sigma(a)h= \sigma(a)Ah,
	\end{align*}
	where $\xi \in E, \: h \in \mathcal{H}$ and $ a \in \mathcal{B}.$
\end{definition}

To prove the primary conclusion of this section, we start with the lemma, which is quite helpful. 
\begin{lemma}\label{6.1}
	Consider a  CB-representation $(\sigma, V)$ of $E$ on $\mathcal{H}$ and $\widetilde{V}$ has left inverse. Assume that  $(\sigma,V)$ and its Cauchy dual $(\sigma,V')$ have GWS-property. Suppose $\mathcal{K}$ is a non-trivial proper $(\sigma,V)$-invariant subspace of $\mathcal{H}$ and $\mathcal{W}\subseteq \mathcal{K}^\perp,$ where $ \mathcal{W}= N(\widetilde{V}^*).$ Then there is a non-zero $h_1 \in\mathcal{K} $ so that $\widetilde{V}^*h_1 \in E \ot \mathcal{K}^\perp.$
\end{lemma}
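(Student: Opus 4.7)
The plan is to argue by contradiction. First, I would translate the conclusion into a statement about $\mathcal{K}$ alone: for $h_1 \in \mathcal{K}$ and $\xi \otimes k \in E \otimes \mathcal{K}$,
\[
\langle \widetilde{V}^{*}h_1,\, \xi\otimes k\rangle \;=\; \langle h_1,\, \widetilde{V}(\xi\otimes k)\rangle,
\]
so the condition $\widetilde{V}^{*}h_1 \in E\otimes \mathcal{K}^{\perp}$ is equivalent to $h_1 \perp \widetilde{V}(E\otimes\mathcal{K})$. Thus the existence of a non-zero such $h_1$ is exactly the statement $\mathcal{K}\ominus \widetilde{V}(E\otimes\mathcal{K})\neq \{0\}$. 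Because $\widetilde{V}$ has a left inverse, $\widetilde{V}$ is bounded below, so $\widetilde{V}(E\otimes\mathcal{K})$ is a closed subspace of $\mathcal{K}$. Consequently, the negation for contradiction reads $\widetilde{V}(E\otimes\mathcal{K}) = \mathcal{K}$.

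Under this assumption, every $h\in\mathcal{K}$ can be written as $h=\widetilde{V}(\eta)$ with $\eta\in E\otimes\mathcal{K}$, and since $\widetilde{V}$ is injective (so $\widetilde{V}^{\dagger}\widetilde{V}=P_{N(\widetilde{V})^{\perp}}=I_{E\otimes\mathcal{H}}$), applying $\widetilde{V}^{\dagger}$ yields $\widetilde{V}^{\dagger}h=\eta\in E\otimes \mathcal{K}$. Therefore $\widetilde{V}^{\dagger}(\mathcal{K})\subseteq E\otimes \mathcal{K}$. Using the identification
\[
\widetilde{V}^{\dagger} \;=\; (\widetilde{V}^{*}\widetilde{V})^{-1}\widetilde{V}^{*} \;=\; (\widetilde{V}')^{*},
\]
this inclusion is equivalent (by taking orthogonal complements) to $\widetilde{V}'(E\otimes\mathcal{K}^{\perp})\subseteq \mathcal{K}^{\perp}$. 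Since $\mathcal{K}$ is $\sigma(\mathcal{B})$-invariant, so is $\mathcal{K}^{\perp}$, hence $\mathcal{K}^{\perp}$ is $(\sigma, V')$-invariant.

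The final step is to invoke the GWS-property of the Cauchy dual. Because $\widetilde{V}^{*}\widetilde{V}'=I$, the operator $\widetilde{V}'$ is injective; combined with the standard orthogonal argument, any GWS for $(\sigma,V')$ must coincide with $\mathcal{H}\ominus \widetilde{V}'(E\otimes\mathcal{H})=N((\widetilde{V}')^{*})=N(\widetilde{V}^{\dagger})=R(\widetilde{V})^{\perp}=\mathcal{W}$. Thus $\mathcal{H}=[\mathcal{W}]_{\widetilde{V}'}=\bigvee_{n\in\mathbb{N}_0}\widetilde{V}'_{n}(E^{\otimes n}\otimes \mathcal{W})$. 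Since $\mathcal{K}^{\perp}$ is $(\sigma, V')$-invariant and $\mathcal{W}\subseteq\mathcal{K}^{\perp}$ by hypothesis, we deduce $\mathcal{K}^{\perp}\supseteq [\mathcal{W}]_{\widetilde{V}'}=\mathcal{H}$, forcing $\mathcal{K}=\{0\}$ and contradicting the non-triviality of $\mathcal{K}$.

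The main obstacle, and the crux of the argument, is the passage from $\widetilde{V}(E\otimes\mathcal{K})=\mathcal{K}$ (an algebraic-core type statement about $\widetilde{V}$) to the $(\sigma,V')$-invariance of $\mathcal{K}^{\perp}$; this is mediated entirely by the identification $\widetilde{V}^{\dagger}=(\widetilde{V}')^{*}$. Once this duality is in place, the GWS-property of $(\sigma,V')$ acts as an absorbing mechanism that engulfs $\mathcal{K}^{\perp}$, closing the argument.
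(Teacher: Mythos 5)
Your proof is correct, but it runs along a genuinely different line from the paper's. The paper argues directly: from $\mathcal{H}=[\mathcal{W}]_{\widetilde{V}'}$ it picks the smallest $N$ with $\widetilde{V}'_{N}(E^{\otimes N}\otimes\mathcal{W})\not\perp\mathcal{K}$, writes $\widetilde{V}'_{N}(\eta_{N})=h_{1}+h_{2}$ with $0\neq h_{1}\in\mathcal{K}$, $h_{2}\in\mathcal{K}^{\perp}$, and computes $\widetilde{V}^{*}h_{1}=(I_{E}\otimes\widetilde{V}'_{N-1})(\eta_{N})-\widetilde{V}^{*}h_{2}\in E\otimes\mathcal{K}^{\perp}$ using $\widetilde{V}^{*}\widetilde{V}'=I$ and minimality of $N$; so it actually constructs $h_{1}$. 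You instead reformulate the conclusion as $\mathcal{K}\ominus\widetilde{V}(E\otimes\mathcal{K})\neq\{0\}$, assume the contrary $\widetilde{V}(E\otimes\mathcal{K})=\mathcal{K}$, convert this via $\widetilde{V}^{\dagger}=(\widetilde{V}')^{*}$ into $(\sigma,V')$-invariance of $\mathcal{K}^{\perp}$, and then let the GWS-property of the Cauchy dual (with generating wandering subspace $N(\widetilde{V}'^{*})=\mathcal{W}\subseteq\mathcal{K}^{\perp}$) absorb $\mathcal{H}$ into $\mathcal{K}^{\perp}$, forcing $\mathcal{K}=\{0\}$. Both arguments rest on the same two pillars --- the identity $\widetilde{V}^{*}\widetilde{V}'=I$ (equivalently $(\widetilde{V}')^{*}=\widetilde{V}^{\dagger}$) and the fact that the generating wandering subspace of $(\sigma,V')$ is $N(\widetilde{V}'^{*})=\mathcal{W}$ --- but your route is non-constructive and conceptually cleaner: it exposes that the lemma is exactly the statement $\mathcal{K}\ominus\widetilde{V}(E\otimes\mathcal{K})\neq\{0\}$, and it shows that the GWS-property of $(\sigma,V)$ itself is never used (only that of the Cauchy dual), whereas the paper's version has the virtue of exhibiting $h_{1}$ explicitly as the $\mathcal{K}$-component of $\widetilde{V}'_{N}(\eta_{N})$. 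One small point you should make explicit (the paper is equally silent about it): the equivalences ``$\widetilde{V}^{*}h_{1}\in E\otimes\mathcal{K}^{\perp}\iff h_{1}\perp\widetilde{V}(E\otimes\mathcal{K})$'' and ``$\widetilde{V}^{\dagger}(\mathcal{K})\subseteq E\otimes\mathcal{K}\iff\widetilde{V}'(E\otimes\mathcal{K}^{\perp})\subseteq\mathcal{K}^{\perp}$'' use the orthogonal decomposition $E\otimes\mathcal{H}=(E\otimes\mathcal{K})\oplus(E\otimes\mathcal{K}^{\perp})$, which is legitimate because $\mathcal{K}$ is $\sigma(\mathcal{B})$-invariant and $\sigma$ is a $*$-representation, so $\mathcal{K}$ reduces $\sigma(\mathcal{B})$.
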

\begin{proof}
	The subspace $\mathcal{W}\subseteq \mathcal{K}^{\perp} $ satisfies the following conditions:
	\begin{enumerate}
		\item $\mathcal{K}$ is not $(\sigma,V)$-reducing. Indeed, if $\mathcal{K}$ is $(\sigma,V)$-reducing, then $\mathcal{K} \perp  [\mathcal{W}]_{\wt{V}} =\mathcal{H} ,$ which contradicts to the assumption that $\mathcal{K}$ is proper.
		\item For any non-zero $ h \in \mathcal{K},$ $\widetilde{V}^{*}h \neq 0,$ otherwise $ h \in \mathcal{W}\subseteq \mathcal{K}^\perp.$   \end{enumerate}	Note that  $  N(\widetilde{V}^*)=  N(\widetilde{V}'^{*}),$  it follows from our assumptions that $
	\mathcal{H}= [\mathcal{W}]_{\wt{V}'}.$   Since $\mathcal{W}\subseteq \mathcal{K}^{\perp} $ and $\mathcal{K}$ is proper,  there exists $n \in \mathbb{N}$  such that  $\widetilde{V}'_{n}(E^{\otimes n} \otimes \mathcal{W})$ is not orthogonal to $\mathcal{K}.$ Then there exists a non-zero $\eta_n \in E^{\ot n} \ot \mathcal{W}$ such that  $\widetilde{V}'_{n}(\eta_n)$  is not orthogonal to $\mathcal{K}.$ Let $N$ be the smallest positive integer so that    $\widetilde{V}'_{N}(E^{\otimes N} \otimes \mathcal{W})$ is not orthogonal to $\mathcal{K}$ (such a $N$ always exists because $\mathcal{W}\subseteq \mathcal{K}^{\perp} $), that is,    $\widetilde{V}'_{l}(E^{\otimes l} \otimes \mathcal{W})$ is  orthogonal to $\mathcal{K}$ for all $0 \leq l \leq N-1.$ Choose the  non-zero elements  $h_1 \in \mathcal{K} $ and $h_2 \in \mathcal{K}^\perp$ such that  $\widetilde{V}'_{N}(\eta_N)=h_1 + h_2. $ Then by the observation $(2)$ and the minimality  on $N, $ we obtain
	$$	\widetilde{V}^* h_1  =\widetilde{V}^*\widetilde{V}'_{N}(\eta_N)-\widetilde{V}^*h_2 =\widetilde{V}^*\widetilde{V}'(I_{E} \ot \widetilde{V}'_{N-1})(\eta_{N})-\widetilde{V}^*h_2 =(I_{E} \ot \widetilde{V}'_{N-1})(\eta_{N})-\widetilde{V}^*h_2 \in E\ot \mathcal{K}^{\perp}.$$ This completes the proof. 
\end{proof}\qedhere

\begin{theorem}\label{pure1}
	Consider a  CB-representation $(\sigma, V)$ of $E$ on $\mathcal{H}$ and  $\widetilde{V}$ has left inverse. Suppose that $A$ is a bounded linear operator  on $\mathcal{H}$ which  intertwine $(\sigma,V).$ Assume that $(\sigma,V)$ and its Cauchy dual $(\sigma,V')$ have the GWS-property. Then the following assertions are equivalent:
	\begin{enumerate}
		\item $A$ is a pure contraction on $\mathcal{H}.$
		\item $ P_\mathcal{W} A|_{\mathcal{W}} $ is a pure contraction, where $ \mathcal{W}= N(\widetilde{V}^*).$
	\end{enumerate} 
\end{theorem}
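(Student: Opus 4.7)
The plan is to handle the two directions separately, with $(1) \Rightarrow (2)$ resting on a simple invariance observation and $(2) \Rightarrow (1)$ proceeding by contradiction via Lemma \ref{6.1}.

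For $(1) \Rightarrow (2)$, the intertwining relation $A\widetilde{V} = \widetilde{V}(I_E \otimes A)$ forces $A R(\widetilde{V}) \subseteq R(\widetilde{V})$, whence $A^*$ preserves $\mathcal{W} = R(\widetilde{V})^\perp$. Consequently $(P_\mathcal{W} A|_\mathcal{W})^* = A^*|_\mathcal{W}$, so $(P_\mathcal{W} A|_\mathcal{W})^{*n} w = A^{*n} w$ for every $w \in \mathcal{W}$, and the required convergence on $\mathcal{W}$ is inherited from that on $\mathcal{H}$.

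For $(2) \Rightarrow (1)$, set $\mathcal{K} := \{h \in \mathcal{H} : \|A^{*n} h\| \to 0\}$, a closed subspace. Taking adjoints in $A\sigma(a) = \sigma(a) A$ and $AV(\xi) = V(\xi) A$ yields that $A^*$ commutes with $\sigma(\mathcal{B})$ and each $V(\xi)^*$; thus $\mathcal{K}$ is $\sigma(\mathcal{B})$- and $V(\xi)^*$-invariant, and $\mathcal{K}^\perp$ is $(\sigma,V)$-invariant. From $(2)$ together with the preceding identification, $\mathcal{W} \subseteq \mathcal{K}$. Suppose toward a contradiction that $\mathcal{K}^\perp \neq \{0\}$; then $\mathcal{K}^\perp$ is a non-trivial proper $(\sigma,V)$-invariant subspace with $\mathcal{W} \subseteq (\mathcal{K}^\perp)^\perp$, so Lemma \ref{6.1} produces a non-zero $h_1 \in \mathcal{K}^\perp$ for which $\widetilde{V}^* h_1 \in E \otimes \mathcal{K}$. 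The adjoint intertwining $\widetilde{V}^* A^* = (I_E \otimes A^*) \widetilde{V}^*$ then gives $\widetilde{V}^* A^{*n} h_1 = (I_E \otimes A^*)^n \widetilde{V}^* h_1$, and since $\mathcal{K}$ is $A^*$-invariant, $E \otimes \mathcal{K}$ is $(I_E \otimes A^*)$-invariant; a density argument using $\|I_E \otimes A^*\| \le 1$ together with $A^{*n} k \to 0$ for $k \in \mathcal{K}$ shows $(I_E \otimes A^*)^n y \to 0$ for every $y \in E \otimes \mathcal{K}$. Hence $\widetilde{V}^* A^{*n} h_1 \to 0$, and left invertibility of $\widetilde{V}$ makes $\widetilde{V}(\widetilde{V}^*\widetilde{V})^{-1}$ bounded, giving $P_{R(\widetilde{V})} A^{*n} h_1 = \widetilde{V}(\widetilde{V}^*\widetilde{V})^{-1} \widetilde{V}^* A^{*n} h_1 \to 0$.

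Writing $A^{*n} h_1 = f_n + g_n$ with $f_n := P_\mathcal{W} A^{*n} h_1 \in \mathcal{W}$ and $g_n := P_{R(\widetilde{V})} A^{*n} h_1 \in R(\widetilde{V})$, we have $g_n \to 0$. For $\epsilon > 0$, choose $m$ with $\|g_m\| < \epsilon/2$; since $f_m \in \mathcal{W}$, hypothesis $(2)$ forces $A^{*n} f_m \to 0$ as $n \to \infty$, so
\[
\|A^{*(n+m)} h_1\| \leq \|A^{*n} f_m\| + \|A^{*n} g_m\| \leq \|A^{*n} f_m\| + \|g_m\| < \epsilon
\]
for $n$ sufficiently large. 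Thus $A^{*n} h_1 \to 0$, contradicting $h_1 \in \mathcal{K}^\perp \setminus \{0\}$, and so $\mathcal{K} = \mathcal{H}$. The main obstacle is verifying the $(\sigma,V)$-invariance of $\mathcal{K}^\perp$ to unlock Lemma \ref{6.1}, and then upgrading the weak information $\widetilde{V}^* A^{*n} h_1 \to 0$ extracted from the lemma into the full convergence $A^{*n} h_1 \to 0$ through the splitting $\mathcal{H} = \mathcal{W} \oplus R(\widetilde{V})$ combined with the $\mathcal{W}$-side hypothesis.
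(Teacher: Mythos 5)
Your proposal is correct and takes essentially the same route as the paper: your subspace $\mathcal{K}=\{h\in\mathcal{H}:\|A^{*n}h\|\to 0\}$ is exactly the paper's $N(D_A)$ (where $D_A^2$ is the strong limit of $A^nA^{*n}$), and both arguments prove $(2)\Rightarrow(1)$ by applying Lemma \ref{6.1} to its orthogonal complement and then using the bounded-below property of $\widetilde{V}$ (via $\widetilde{V}^*A^{*n}h_1=(I_E\otimes A^{*n})\widetilde{V}^*h_1\to 0$) to force the extracted vector $h_1$ back into $\mathcal{K}$, a contradiction. The remaining differences are cosmetic: you verify the $(\sigma,V)$-invariance of $\mathcal{K}^{\perp}$ directly from the intertwining relations instead of the paper's computation with $I_E\otimes D_A$, and you conclude with an $\varepsilon/2$-splitting of $A^{*m}h_1$ rather than the paper's estimate $\|D_Ah_1\|\le\|D_A\|\,\|\xi^2_n\|$.
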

\begin{proof} $(1)\implies (2):$
	Note that $\mathcal{W}$ is a $ A^*$-invariant subspace of $\mathcal{H},$  therefore for each $ n \in \mathbb{N},$ ${(P_\mathcal{W} A|_{\mathcal{W}})^{*}}^{n}={A^{*}}^{n}|_\mathcal{W}.$ Hence if $A$ is pure contraction, then $ P_\mathcal{W} A|_{\mathcal{W}} $ is also pure contraction.

	$(2)\implies (1):$ Suppose $ P_\mathcal{W} A|_\mathcal{W} $ is  pure contraction. Since $A$ is contraction, $\{A^n{A^{*}}^{n}\}_n $  is a decreasing sequence of non-negative operators. Therefore there is a positive operator, denoted by $D_A,$ such that the sequence  $\{A^n{A^{*}}^{n}\}_n $ converges to $D_A ^2$ in the strong operator topology.
	To prove  $A$ is a pure contraction, only we need to show that $N(D_A) =\mathcal{H}.$ Since $ P_\mathcal{W} A|_{\mathcal{W}} $ is a pure contraction,  $ \mathcal{W} \subseteq N(D_A).$ Indeed, for $ h \in \mathcal{W}$ \begin{align*}
	\|D_Ah \|=\lim_{n \rightarrow \infty}\|A^{*	n}h\|=\lim_{n \rightarrow \infty}\|{(P_\mathcal{W} A|_{\mathcal{W}})^{*}}^{n}h\|=0.
	\end{align*}
	Case 1: If $N(D_A)$ is $(\sigma,V)$-reducing  then \begin{align*}
	\mathcal{H}= [\mathcal{W}]_{\wt{V}} \subseteq N(D_A)\subseteq\mathcal{H},
	\end{align*} that is,  $N(D_A) =\mathcal{H}.$
	
	Case 2: If $N(D_A)$ is not   $(\sigma,V)$-reducing  then $N(D_A) $ is a proper subspace of $\mathcal{H}.$  Note that $ N(D_A)^{\perp} $ is  $(\sigma,V)$-invariant. Indeed, for  $ g \in N(D_{A})$ and using intertwine property of $A,$ we obtain \begin{align*}
	\|(I_{E}\ot D_{A})\widetilde{V}^{*}g\|^{2}=\lim_{n \rightarrow \infty}\langle\widetilde{V}(I_{E}\ot A^{n}{A^{*}}^{n})\widetilde{V}^{*}g,g\rangle=\lim_{n \rightarrow \infty}\langle A^{n}\widetilde{V}\widetilde{V}^{*}{A^{*}}^{n}g,g\rangle\leq\|\wt{V}\|^2 \|D_Ag\|^2 =0,
	\end{align*} thus $\wt{V}^*N(D_{A}) \subseteq E \otimes N(D_{A})$ and hence  $ N(D_A)^{\perp} $ is  $(\sigma,V)$-invariant. Therefore by  Lemma \ref{6.1}, there is a  $h_1 \in N(D_A)^{\perp} $ such that $\widetilde{V}^*h_1 \in E \ot N(D_A)$ and $h_{1}\neq 0.$ For each $m \in \mathbb{N},$ there exist $\xi^1_m  \in \mathcal{W}$ and $\xi^2_m \in \mathcal{W}^{\perp}$ such that \begin{equation*}\label{pure}
	A^{*m}h_1 = \xi^1_m +\xi^2_m.
	\end{equation*}
	Since $A$ is contraction, $\|\xi_m^2\| \leq \|h_1\|$ for all $m \in \mathbb{N}.$ Now apply $\widetilde{V}^*$ both the sides of the previous equation, we get 
	\begin{align*}
	\widetilde{V}^*A^{*m}h_1 =\widetilde{V}^*\xi^2_m.
	\end{align*} But $\widetilde{V}^*h_1 \in E \ot N(D_A),$ \begin{align*}
	\lim_{n \rightarrow \infty}\|\widetilde{V}^*\xi^2_m\|=\lim_{n \rightarrow \infty}\|\widetilde{V}^*A^{*m}h_1\|=\lim_{n \rightarrow \infty}\|(I_{E} \ot A^{*m})\widetilde{V}^*h_1\|=0.	\end{align*} Note that $\xi^2_m \in \mathcal{W}^\perp=\widetilde{V}(E\ot \mathcal{H}),$ there is $\eta^2_m \in E \ot \mathcal{H} $ such that $\widetilde{V} (\eta^2_m)=\xi^2_m$ and also $\widetilde{V}$ has  left inverse, hence $\widetilde{V}$ becomes bounded below.  Then there exists $ \alpha \textgreater 0$ such that \begin{align*}
	\|\eta^2_m\|\leq \alpha\|\widetilde{V} (\eta^2_m)\|=\alpha\|\xi^2_m\|.
	\end{align*}  From the above inequality,  we have \begin{align*}
	\lim_{m \rightarrow \infty}\|\xi^2_m\|^{2}=\lim_{m \rightarrow \infty}\|\widetilde{V} (\eta^2_m)\|^{2}\leq\lim_{m \rightarrow \infty}\|\widetilde{V}^*\widetilde{V} (\eta^2_m)\|\| (\eta^2_m)\|\leq\lim_{m \rightarrow \infty}\|\widetilde{V}^* (\xi^2_m)\|\| \alpha\xi^2_m\|= 0,  
	\end{align*} 
	since the sequence $\{\xi_m^2\}$ bounded by $h_1.$ Thus for each $n \in \mathbb{N},$ \begin{align*}
	\|D_Ah_1 \|=\|D_A{A^{*}}^{n}h_1\|=\|D_A(\xi^1_n +\xi^2_n)\|\leq 	\|D_A \|\|\xi^2_n\|.
	\end{align*}  If we let $n \rightarrow \infty$ form the above inequality, $D_Ah_1=0,$ i.e., $ h_1 \in N(D_A).$ However, by assumptions  $ h_1 \in N(D_A)^{\perp}$ and hence $h_1=0.$ This contradicts to our assumptions $N(D_A) $ is a proper subspace of $\mathcal{H}.$ Since $\{0\} \neq \mathcal{W} \subseteq N(D_A),$ the only possibility for $N(D_A)$ is $\mathcal{H}$ which is equivalent to showing that $A$ is  pure.
\end{proof}
\begin{corollary}
	Consider an analytic, regular  CB-representation $(\sigma, V)$ of $E$ on $\mathcal{H}$ so that $\gamma({\widetilde{V}}) \geq 1$ and satisfies the growth condition. Suppose $A$ is a bounded linear operator on $\mathcal{H}$ so that $A$ intertwine  $(\sigma,V).$ Then the following are equivalent:
	\begin{enumerate}
		\item $A$ is a pure contraction on $\mathcal{H}.$
		\item $ P_\mathcal{W} A|_{\mathcal{W}}$ is a pure contraction, where $ \mathcal{W} = N(\widetilde{V}^*).$
	\end{enumerate}
\end{corollary}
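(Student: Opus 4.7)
The plan is to reduce the corollary to Theorem \ref{pure1} by verifying its three structural hypotheses for $(\sigma,V)$: namely (a) $\widetilde{V}$ has a left inverse, (b) $(\sigma,V)$ has the GWS-property, and (c) the Cauchy dual $(\sigma,V')$ has the GWS-property. The equivalence of pure contractivity between $A$ and $P_{\mathcal{W}}A|_{\mathcal{W}}$ then follows directly from Theorem \ref{pure1}.

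For (a), since $(\sigma,V)$ is pure, $R^{\infty}(V)=\{0\}$, so the second part of the Corollary following Proposition \ref{5.3} (applicable because $(\sigma,V)$ is regular) immediately yields that $\widetilde{V}$ is left invertible; in particular the Moore--Penrose inverse exists and $\widetilde{V}^{\dagger}=(\widetilde{V}^{*}\widetilde{V})^{-1}\widetilde{V}^{*}$, so the Cauchy dual $\widetilde{V}'=\widetilde{V}(\widetilde{V}^{*}\widetilde{V})^{-1}$ is well defined. For (b), the growth condition together with regularity and $\gamma(\widetilde{V})\geq 1$ places us in the setting of Theorem \ref{TT}; part (5) of that theorem gives $\mathcal{H}=[\mathcal{W}]_{\widetilde{V}}\oplus R^{\infty}(V)$, and purity collapses the second summand, giving $\mathcal{H}=[\mathcal{W}]_{\widetilde{V}}$, which is precisely the GWS-property for $(\sigma,V)$.

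Step (c) is the step one might expect to be the main obstacle, but it falls out of the duality already built into the paper. Observe first that under our hypotheses $\widetilde{V}'=\widetilde{V}^{\dagger *}$: this is a direct manipulation using $\widetilde{V}^{\dagger}=(\widetilde{V}^{*}\widetilde{V})^{-1}\widetilde{V}^{*}$. Taking adjoints of the defining product for $\widetilde{V}^{\dagger(n)}$ and comparing with the inductive formula $\widetilde{V}'_{n}=\widetilde{V}'(I_{E}\otimes \widetilde{V}'_{n-1})$ yields $\widetilde{V}^{\dagger *(n)}=\widetilde{V}'_{n}$ for every $n$. Now apply Corollary \ref{CC}(2) to $(\sigma,V)$ (which is regular):
\begin{equation*}
R^{\infty}(V)^{\perp}=\bigvee_{i=0}^{\infty}\bigl\{\widetilde{V}^{\dagger *(i)}(\xi_{i}):\xi_{i}\in E^{\otimes i}\otimes \mathcal{W}\bigr\}=\bigvee_{i=0}^{\infty}\widetilde{V}'_{i}(E^{\otimes i}\otimes \mathcal{W}')=[\mathcal{W}']_{\widetilde{V}'},
\end{equation*}
where we used $\mathcal{W}'=N(\widetilde{V}'^{*})=N(\widetilde{V}^{*})=\mathcal{W}$. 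Purity makes the left-hand side all of $\mathcal{H}$, so $(\sigma,V')$ has GWS-property with the same wandering subspace $\mathcal{W}$.

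With (a), (b) and (c) in hand, every hypothesis of Theorem \ref{pure1} is satisfied, and the equivalence of the two pure-contraction statements for the intertwining operator $A$ is exactly the conclusion of that theorem, completing the proof.
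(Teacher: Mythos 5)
Your proposal is correct and follows essentially the same route as the paper: Theorem \ref{TT} plus purity to get $R^{\infty}(V)=\{0\}$, hence left invertibility of $\widetilde{V}$, the identification $\widetilde{V}'=\widetilde{V}^{\dagger*}$, the GWS-property for both $(\sigma,V)$ and its Cauchy dual (the paper asserts the dual GWS-property via the same duality you spell out through Corollary \ref{CC}(2)), and finally an application of Theorem \ref{pure1}. Your write-up simply makes explicit the verifications that the paper's shorter proof leaves implicit.
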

\begin{proof}
	By Theorem \ref{TT}, \begin{equation*}
	\mathcal{H} = [ \mathcal{W}]_{\widetilde{V}} \oplus R^{\infty}({{V}}).
	\end{equation*} Since $(\sigma,V)$ is pure and regular, $R^{\infty}({V}) =0$ and hence $\widetilde{V}$ is left invertible and $\widetilde{V}^{\dagger *} = \widetilde{V}'.$ Also note that $\mathcal{H}=\displaystyle\bigvee_{n \in \mathbb{N}_0}\widetilde{V}_{n}(E^{\ot n} \ot \mathcal{W}) =\displaystyle \bigvee_{n\in \mathbb{N}_0}\widetilde{V}^{\dagger *}_{n}(E^{\ot n} \ot \mathcal{W}) .$ Therefore by Theorem \ref{pure1}, $A$ is  pure contraction  if and only if $ P_\mathcal{W} A|_\mathcal{W}$ is  pure contraction.
\end{proof}\section{Applications to unilateral and bilateral weighted shifts}\label{7}
In this section, we apply the Wold decomposition to unilateral shift defined by Muhly and Solel in \cite{MS16} and an analog of bilateral shift used in \cite{EMZ15}. Here we introduce an analog of unilateral weighted shift on the vector-valued Hardy space, and we discuss GWS-property.
\subsection{$\mathbf{Unilateral\:\: shift}$}
In this subsection, we will consider  $\mathcal{B}$ to be $W^{*}$-algebra and $E$ to be $W^{*}$-correspondence over  $\mathcal{B}.$ Indeed, we shall use the setting of \cite{MS16}. The {\rm Fock space} of  a $W^*$-correspondence $E,$ defined as  $\mathcal{F}(E):=\displaystyle\bigoplus_{n \in \mathbb{N}_{0}}E^{\ot n},$  is a $W^{*}$-correspondence over $\mathcal{B}.$  The left module action of $\mathcal{B}$ on $\mathcal{F}(E)$ is denoted by $\phi^{\infty}$ and, is defined by $ \phi^{\infty}(a):=\displaystyle\oplus_{n \in \mathbb{N}_{0}}\phi^n(a),$  $ a \in \mathcal{B}.$   For $\xi\in E$, we define the  {\rm creation operator} $T_{\xi}$   on $\mathcal{F}(E)$ by
\begin{equation*}
T_{\xi}(\eta_{n})=\xi \ot \eta_{n},\:\:\:\:\: \eta_{n} \in E^{\ot n},\:n\in \mathbb{N}_{0}.
\end{equation*} 
A linear map $T :E \to \mathcal{L}(\mathcal{F}(E))$ is defined by $T(\xi)=T_{\xi},$ $\xi \in E,$  then it satisfies $ {T}(a\xi b)=\phi_{\infty}(a){T}(\xi)\phi_{\infty}(b), $ where $a,b\in \mathcal{B}$ and $\xi\in E.$  Note that,  the Fock module  $\mathcal{F}(E)$ is $ W^{*}$ -correspondence but not necessarily a Hilbert
space in general, so that  $(\phi_{\infty}, T)$ may not be covariant representation of $E$ on $\mathcal{L}(\mathcal{F}(E))$. But this can be overcome by composing the pair  $(\phi_{\infty}, T)$ with a faithful representation of the $W^{*}$-algebra $\mathcal{L}(\mathcal{F}(E))$ on a Hilbert space, this means that the pair $ (\psi\circ \phi_{\infty}, \psi\circ T) $ is a covariant representation of $E$ on $B(\mathcal{H})$ where  $\psi :\mathcal{L}(\mathcal{F}(E)) \to B(\mathcal{H}) $ is a faithful representation of the $C^{*}$-algebra $\mathcal{L}(\mathcal{F}(E))$ on a Hilbert space $\mathcal{H}.$

\begin{notation}
	Let	$\phi^n(\mathcal{B})^{c}$ be the set of all the elements  in $\mathcal{L}(E^{\ot n})$ which commute with  the image of $\mathcal{B}$ under $\phi^n, n \in \mathbb{N}_0.$  Similarly, the commutant of $\phi^{\infty}(\mathcal{B})$ in $\mathcal{L}(\mathcal{F}(E))$   denoted as $\phi^{\infty}(\mathcal{B})^{c}.$  Observe that $\phi^{0}(\mathcal{B})^{c}$ is simply the center of $\mathcal{B}.$ 
\end{notation}

Let $E$ be $W^{*}$-correspondence  over $\mathcal{B}.$ A  sequence  $\{Z_{k}\}_{k\in \mathbb{N}_{0} },$ where  $Z_{k}\in \phi^{k}(\mathcal{B})^{c} ,$ is called waight sequence  on $(\mathcal{B}, E)$ if  $Z_{0}=I_{\mathcal{B}}$  and $\sup\|Z_{k}\|<\infty.$  The following definition is due to Muhly-Solel \cite{MS16}:

\begin{definition} Let $E$ be a $W^{*}$-correspondence  over $\mathcal{B}$ and  $Z=\{Z_{k}\}_{k\in \mathbb{N}_{0} }$ be a  weight squence on $(E, \mathcal{B}).$  A linear map $W:E\to \mathcal{L}(\mathcal{F}(E))$ is called \rm{weighted shift} associated with the weight seqience $Z=\{Z_{k}\}_{k\in \mathbb{N}_{0} },$  if $W=DT$ where $D$ is the diagonal operator on $\mathcal{F}({E})$ corresponding to  $\{Z_{k}\}_{k\in \mathbb{N}_{0} },$ i.e., for $\eta_{n} \in E^{\ot n},$\begin{align*}
	W(\xi)\eta_{n}=DT(\xi)\eta_{n}=DT_{\xi}\eta_{n}=D(\xi\ot \eta_{n})=Z_{n+1}(\xi\ot \eta_{n}),\:\:\: \xi \in E. 
	\end{align*}	
	
\end{definition} 

Using the covariant condition of $T$  and $D$ lies in $\phi^{\infty}(\mathcal{B})^{c},$ one can easily see that $W$ satisfies  $ {W}(a\xi b)=\phi_{\infty}(a){W}(\xi)\phi_{\infty}(b), $ where $a,b\in \mathcal{B}$ and $\xi\in E.$  Suppose that  $\pi$ is a representation of $\mathcal{B}$ on a Hilbert space $\mathcal{H}.$ Define  a covariant representaion  $(\rho, S)$ of ${E}$ on  a Hilbert space $\mathcal{F}({E})\otimes \mathcal{H}$ with wight $Z$  by 
$$\rho(a)=\phi_{\infty}(a) \otimes I_{\mathcal{H}} \:\:\mbox{and}\:\:S(\xi)=  W(\xi) \ot I_{\mathcal{H}},  \:\:\:\: a \in \mathcal{B},\: \xi \in E . $$ 

Note that $S(\xi)(\eta)=( W(\xi) \ot I_{\mathcal{H} })\eta =(DT_{{\xi}} \otimes I_{\mathcal{H}})\eta=\sum_{n \in \mathbb{N}_{0}}(Z_{n+1}\ot I_{\mathcal{H}})(\xi \ot  \eta_{n}\ot h_{n} ). $  where $\eta=\oplus_{n \in \mathbb{N}_{0}}\eta_{n}\ot h_{n},$ $\xi \in E, \eta_n \in E^{\ot n}, h_n  \in \mathcal{H}$ and $D=[Z_0, Z_1,  Z_2, \cdots  ]$ is the diagonal operator on $\mathcal{F}({E}).$ We refer to $(\rho, S)$ as the weighted induced  representation induced by $\pi$ with weight $Z.$

Suppose that $Z_{n}$'s are surjective for each $ n \in \mathbb{N}_{0},$  $\text ran(\widetilde{S})=\bigoplus_{n \in \mathbb{N}}(Z_{n}\ot I_{\mathcal{H}})({	E^{\ot n}\ot  \mathcal{H}})= E \ot \mathcal{F}(E)\ot \mathcal{H}$ is closed and thus  $N(\widetilde{S}^{*})=\mathcal{H},$ that is, $\mathcal{H}$ is a  wandering subspace for $(\rho, S).$ In fact $\mathcal{H}$ is the generating wandering subspce for  $(\rho, S),$ since  $\widetilde{S}(E^{\ot n} \ot \mathcal{H})=E^{\ot n} \ot \mathcal{H}.$  

\begin{theorem}
	Let  ${(\rho, S)}$ be a weighted shift on $\mathcal{F}({E}) \ot \mathcal{H}$ with weight $\{Z_{k}\}_{k\in \mathbb{N}_{0} }$ such that  $Z_{k}$'s has closed range and injective for all $ k \in \mathbb{N}.$ Suppose that   $\gamma( Z_{k}\ot I_{\mathcal{H}})\geq 1, k \in \mathbb{N}$   and satisfies the following inequality 
	\begin{align}\label{11}&
	(Z_{k+n}(I_{E} \ot Z_{k+n-1}) \cdots(I_{E^{\ot k-1}} \ot Z_{n+1})\ot I_{\mathcal{H}})^{*} (Z_{k+n}(I_{E} \ot Z_{k+n-1}) \cdots(I_{E^{\ot k-1}} \ot Z_{n+1})\ot I_{\mathcal{H}}) \nonumber\\&\:\:\:\:\: -I_{E^{\ot k+n} \ot \mathcal{H}}  \leq d_{k}(I_{E^{\ot k-1}} \ot(Z_{n+1} \ot I_{\mathcal{H}})^{*}(Z_{n+1} \ot I_{\mathcal{H}}))-I_{E^{\ot k+n} \ot \mathcal{H}}\big), \: n \in \mathbb{N}
	\end{align}
	along with $\sum_{k\geq 2}$ $\frac{1}{d_k} = \infty,$  then ${(\rho, S)}$ admits GWS-property.
\end{theorem}
\begin{proof} 
	Since $Z_k$'s are injective,  $N(\widetilde{S})=  \{0\}$ and thus  the covariant representation $(\rho, S)$ is regular.  The Moore-Penrose inverse  of $\widetilde{S}, \:\widetilde{S}^{\dagger}:  \mathcal{F}(E) \ot \mathcal{H} \to  E \ot\mathcal{F}(E) \ot \mathcal{H}$ is 
	\begin{equation*}
	\widetilde{S}^{\dagger}(\eta)= \bigoplus_{{n \in \mathbb{N}}} ({Z}_{n}\ot I_{\mathcal{H}})^{^{\dagger}}(\eta_{n}\ot h_{n}), \:\:\: \eta =\displaystyle\oplus_{n \in \mathbb{N}_{0}}\eta_{n}\ot h_{n} \in  \mathcal{F}(E) \ot \mathcal{H}.
	\end{equation*}       
	
	Suppose if $\gamma(Z_{n}\ot I_{\mathcal{H}})\geq 1,$ for $n \in \mathbb{N},$    then by Proposition  \ref{Regular}  we get  $\|\widetilde{S}^{\dagger}(\eta)\|^{2}=\bigoplus_{{n \in \mathbb{N}}}\|({Z}_{n}\ot I_{\mathcal{H}})^{^{\dagger}})(\eta_{n}\ot h_{n})\|^{2}\leq\sum_{n \in \mathbb{N}_{0}} \|\eta_{n}\ot h_{n}\|^{2}.$ This demonstrates that $\widetilde{S}^{\dagger}$ is  contraction and hence $\gamma(\widetilde{S})\geq 1.$  Also, it is easy to see that $R^{\infty}(S) =\{0\}.$
	Now, we only need to show that $\widetilde{S}$ satisfies the growth  condition. For $\xi_1, \dots, \xi_{k} $ are in $E,\eta_{n}\in E^{\ot n}$ and $ h\in \mathcal{H},$ a straightforward calculation follows that \begin{align*}
	\widetilde{S}_{k}(\bigotimes_{q=1}^{k}\xi_q\ot  \eta_{n}\ot h)&=({W}_{k} \ot I_{\mathcal{H}})(\bigotimes_{q=1}^{n}\xi_q\ot \eta_{n}\otimes  h_{n})=(DT_{\xi_1} \otimes I_{\mathcal{H}})\cdots (DT_{\xi_k} \otimes I_{\mathcal{H}})(\eta_{n}\ot h_{n})\\&=((Z_{k+n}(I_{E} \ot Z_{k+n-1}) \dots(I_{E^{\ot k-1}} \ot Z_{n+1}))\ot I_{\mathcal{H}})(\bigotimes_{q=1}^{n}\xi_q\ot \eta_{n}\otimes  h_{n}). 
	\end{align*}  More generally,  for $\zeta\in E^{\ot k}$ 
	\begin{equation*}
	\widetilde{S}_{k}(\zeta\ot \oplus_{n \in \mathbb{N}_{0}} \eta_{n}\ot h_{n})=\displaystyle\sum_{n \in \mathbb{N}_{0}}((Z_{k+n}(I_{E} \ot Z_{k+n-1}) \dots(I_{E^{\ot k-1}} \ot Z_{n+1}))\ot I_{\mathcal{H}})(\zeta\ot \eta_{n}\otimes  h_{n}).
	\end{equation*} Since the weight sequence $\{Z_{k}\}_{k\in \mathbb{N}_{0} }$  satisfies the Inequality (\ref{11} ) and by using previous equation, we obtain \begin{align*}\label{gro1}
	\|\widetilde{S}_k (\zeta\ot\eta_{n}\ot h_{n} )\|^2 & \leq d_k \big(\| (I_{E^{\ot {k-1}}} \ot \widetilde{S}) (\zeta\ot\eta_{n}\ot h_{n})\|^2 -\|(I_{E^{\ot {k-1}}} \ot \widetilde{S}^\dagger \widetilde{S}) (\zeta\ot\eta_{n}\ot h_{n})\|^2\big)\\& \:\:\:\: + \|(I_{E^{\ot {k-1}}} \ot \widetilde{S}^\dagger \widetilde{S}) (\zeta\ot\eta_{n}\ot h_{n})\|^2
	\end{align*} along with $\sum_{k \geq 2}$ $\frac{1}{d_k} = \infty,$ where  $\zeta\in E^{\ot k}, \eta_{n}\in E^{\ot n}$ and $ h_{n}\in \mathcal{H}.$ Therefore  $(\rho,S)$ satisfies the growth condition (\ref{2.7}).  Hence by Theorem \ref{TT}, $(\rho, S)$ admits GWS-property.
	\qedhere \end{proof}

\subsection{$\mathbf{Bilateral\:\: Shift}$}

Fix $n \in \mathbb{N},$ $I_{n}=\{1,2,\dots,n\}.$ Let $\mathcal{H}$ be a separable  Hilbert space with an orthonormal basis $\{e_m \:\::\:\: m\in \mathbb{Z}\}$. Consider a bounded set of complex numbers $\{w_{i,m} \in \mathbb{C} : i \in I_n,\:\:m\in \mathbb{Z} \}.$ Define an $n$-tuple of bounded operator $V=(V_1, \dots, V_n)$  on  $\mathcal{H}$ by 
\begin{align}
V_{i}(e_{m})=w_{i,m}e_{i+nm},\:\:\: m \in \mathbb{Z},  \:\:i \in I_n.
\end{align}\label{7.2}
The  operator $V$ can be consider as a bounded operator   from $\bigoplus_{i=1}^n \mathcal{H}$ to $\mathcal{H}.$ Note that  $V=( V_1, V_2,\dots,V_n)$ is  non-commutative. Indeed, for distinct $i,j \in I_{n},$  $
V_i V_j (e_{m})= w_{j,m}w_{i,j+nm}e_{i+n(j+nm)}\neq w_{i,m}w_{j,i+nm}e_{j+n(i+nm)}= V_j V_i (e_{m}).
$  Suppose that for each $i \in I_n, w_{i,0}=0.$
Let $h \in \mathcal{H}$ and  $h =\sum_{ m\in \mathbb{Z}}a_{m}e_{m}, \: a_{m}\in \mathbb{C},$  we have
\begin{equation*}
V_{i}(h)=\sum_{ m\in \mathbb{Z}}a_{m}V_{i}(e_{m}) =\sum_{ m\in \mathbb{Z}\setminus\{0\}}a_{m}w_{i,m}e_{i+nm},\:\:\:\: 
\end{equation*} then $\text{ran}(V_{i})= \overline{\text{span}}\{e_{i+nm}\:\:\:\:|\:\:\: m \in \mathbb{Z}\setminus\{0\}\}.$ 	Since $\{i+nm \:\:|\:\: m\in \mathbb{Z}\} \cap \{j+nm \:\:|\:\: m\in \mathbb{Z}\}=\phi$ for distinct $i,j \in I_{n}$ and $ N(V_{i})=\text{span}\{e_{0}\},$  $\text{ran} (V_{i})\perp \text{ran} (V_{j})  $ and it follows that 
\begin{align*}
\text{ran}({V})=\bigoplus_{i=1}^{n}\text{ran}(V_{i})=\bigoplus_{i=1}^{n}\overline{\text{span}}\{e_{i+nm}\:\:|\:\:\: m \in \mathbb{Z}\setminus\{0\}\}\: \mbox{and }\:  N({V})= \bigoplus_{i=1}^{n}N(V_{i})=\bigoplus_{i=1}^{n}\text{span}\{e_{0}\}.
\end{align*}

Now for $l \geq 2$ and $i \in I_{n},$ we have  for  $h =\sum_{ m\in \mathbb{Z}}a_{m}e_{m} \in \mathcal{H},$
\begin{equation*}
V^{l}_{i}( h)=\sum_{ m\in \mathbb{Z}}a_{m}V^{l}_{i}(e_{m})=\sum_{m \in \mathbb{Z}\setminus A_{i,l}}a_{m}w_{i,m}(\prod_{j=2}^{l}w_{i,(\sum_{k=0}^{l-2}in^{k} +n^{j-1}m)})e_{(\sum_{k=0}^{l-1}i n^{k}+n^{l}m)},
\end{equation*}
where $m \in \mathbb{Z}\setminus A_{i,l}, \: A_{i,l}=\{0, \sum_{k=0}^{j-2}in^{k} \: \:| \:\: 2 \leq j \leq l  \}.$	
Therefore $ 
\text{ran}(V^{l}_{i})= \overline{\text{span}}\big\{e_{(\sum_{k=0}^{l-1}i n^{k}+n^{l}m)}\:\:\:\:|\:\:\:m\in \mathbb{Z}\setminus A_{i,l}   \big\}.$ Consider the set    $A_{i}=\bigcup_{l \in \mathbb{N}} A_{i,l},$ where $A_{i,0}:=\{0\},$ $i \in I_{n},$ then  $
R^{\infty}(V_{i})=\overline{\text{span}}\big\{e_{(\sum_{k=0}^{l-1}i n^{k}+n^{l}m)}\:\:\:\:|\:\:\:m\in \mathbb{Z}\setminus A_{i} , l \in \mathbb{N}  \big\}
$ and
\begin{align*}
R^{\infty}(V)=\bigoplus_{i=1}^{n}R^{\infty}(V_{i})=\bigoplus_{i=1}^{n}\bigcap_{l=0}^{\infty}\text{span}\big\{e_{(\sum_{k=0}^{l-1}i n^{k}+n^{l}m)}\:\:\:\:|\:\:\:m\in \mathbb{Z}\setminus A_{i}   \big\}.
\end{align*} For  each $i \in I_{n},$ $e_{0}\in \text{ran}(V_{n}^{l})$ for all $l \in \mathbb{N},$ then $e_{0}\in R^{\infty}(V_{i})$ and  it follows that  $N(V)=\bigoplus_{i=1}^{n}\text{span}\{e_{0}\}\subseteq \bigoplus_{i=1}^{n}R^{\infty}(V_{i}),$ this shows that    $ V$ is regular. Also, ${N(V)}^{\perp}=\bigoplus_{i=1}^{n}\mathcal{H}_{0},$ where $\mathcal{H}_{0}=\overline{\text{span}}\{e_{m}\:\:\:|\:\: m \in \mathbb{Z}\setminus\{0\}\}.$

Note that the map $V_{0}:\bigoplus_{i=1}^{n}\mathcal{H}_{0}\to \bigoplus_{i=1}^{n}\text{ran}V_{i}$ defined by  $V_{0}|_{\mathcal{H}_{0}}=V_{i}|_{\mathcal{H}_{0}}: \mathcal{H}_{0} \to \text{ran}V_{i}$ where $\mathcal{H}_{0}$  is at $i^{th}$ position, is invertible.
Let $e_{i+nm}\in \text{ran}V_{i},\: m \neq 0,$  $V_{i}^{-1}(e_{i+nm})=\frac{1}{w_{i,m}}e_{m} \in \mathcal{H}_{0},$ and also $\text{ran}(V_i)$'s are closed and orthogonal to each other.  Therefore, the Moore-Penrose  inverse $V^{\dagger}: \mathcal{H} \to \bigoplus_{i=1}^{n}\mathcal{H},$ 
is of the form $V^{\dagger}=(V_{1}^{\dagger},\dots,V_{n}^{\dagger}),$ where  $ V_i^{\dagger}$ is Moore-Penrose inverse of $V_i.$ 
More generally, for  $h=\bigoplus_{i=1}^{n} h_{i}, h_{i}\in \text{ran}(V_{i}),$ then $h_{i}=\sum_{m \in \mathbb{Z}\setminus\{0\}}\alpha_{i+nm}e_{i+nm},$ for  some $\alpha_{i+nm}\in \mathbb{C}$ and  it yields that \begin{align*}\label{MMP}
V^{\dagger}h=V_{0}^{-1}h=\big(\sum_{m \in \mathbb{Z}\setminus\{0\}}\frac{\alpha_{1+nm}}{w_{1,m}}e_{m},\sum_{m \in \mathbb{Z}\setminus\{0\}}\frac{\alpha_{2+nm}}{w_{2,m}}e_{m},\dots, \sum_{m \in \mathbb{Z}\setminus\{0\}}\frac{\alpha_{n+nm}}{w_{n,m}}e_{m}\big).
\end{align*}

Suppose that   $|w_{i, m}| \geq 1$ for all $i \in I_n, m \in \mathbb{Z}\setminus \{0\}.$ Let $ h' =h +h_0\in  {\text{ran}(V)} \oplus{\text{ran}(V)}^{\perp},$ using the previous equation, we get   $
\|V^{\dagger}h'\|^{2}=\sum_{i=1}^{n}\sum_{m\in \mathbb{Z}\setminus\{0\}}{|\frac{\alpha_{i+nm}}{w_{i,m}}|}^{2}\leq \sum_{i=1}^{n}\sum_{m\in \mathbb{Z}\setminus\{0\}}{|{\alpha_{i+nm}}|}^{2}\leq \|h\|^{2}\leq \|h'\|^{2}.
$   This shows that $V^{\dagger}$ is  contraction, and thus  Proposition \ref{Regular}  implies that $\gamma(V)\geq 1.$

Let $E$ be an $n$-dimensional Hilbert space  with an    orthonormal basis $\{\delta_i\}_{i\in I_{n}}.$  
Define a completely bounded covariant representation  $(\rho, S^{w})$ of  $E$ on $\mathcal{H}$  by
$\rho(a)=a I_{\mathcal{H}} \:\: \mbox{and}\: \:\: S^{w}(\delta_i)=V_i,  \:\:a \in \mathbb{C}, \: i \in I_n,$
where   $(V_1, \dots, V_n)$ is in (\ref{7.2}). The  representation  $(\rho, S^{w})$  is   called  {\it bilateral weighted shift} with weight $\{w_{i,m} : 1\leq i \leq n,\:\:\:m\in \mathbb{Z} \}.$ 


Let $l \geq 2$ and $i \in I_{n},$ 
we can deduce that for $h =\sum_{ m\in \mathbb{Z}}a_{m}e_{m} \in \mathcal{H}, a_{m} \in\mathbb{C},$ \begin{equation*}
{S^{w}(\delta_i)}^{l} h=\sum_{ m\in \mathbb{Z}}a_{m}{S^{w}(\delta_i)}^{l}(e_{m})=\sum_{m \in \mathbb{Z}\setminus A_{i,l}}a_{m}w_{i,m}(\prod_{j=2}^{l}w_{i,(\sum_{k=0}^{l-2}in^{k} +n^{j-1}m)})e_{(\sum_{k=0}^{l-1}i n^{k}+n^{l}m)},
\end{equation*} 	where $  A_{i,l}=\{0, \sum_{k=0}^{j-2}in^{k} \: \:| \:\: 2 \leq j \leq l  \}$ and it follows that  $
\text{ran} {S^{w}(\delta_i)}^{l}= {\overline{\text{span}}}\big\{e_{\sum_{k=0}^{l-1}i n^{k}+n^{l}m}\:\:|\:m\in \mathbb{Z}\setminus A_{i,l}   \big\}.$  
For $i \in I_{n},$ consider the set  $A_{i}=\bigcup_{l\in \mathbb{N}} A_{i,l}$   then $
R^{\infty}({S^{w}(\delta_i)})=\bigcap_{l=0}^{\infty}{\overline{\text{span}}}\big\{e_{\sum_{k=0}^{l-1}i n^{k}+n^{l}m}\:\:\:\::\:\:\:m\in \mathbb{Z}\setminus A_{i}   \big\}$
and thus
\begin{align*}
R^{\infty}(S^{w})=\bigoplus_{i=1}^{n}R^{\infty}(S^{w}_{i})=\bigoplus_{i=1}^{n}\bigcap_{l=0}^{\infty}{\overline{\text{span}}}\big\{e_{\sum_{k=0}^{l-1}i n^{k}+n^{l}m}\:\:\:\::\:\:\:m\in \mathbb{Z}\setminus A_{i}   \big\}.
\end{align*} 
Note that for $i \in I_{n},$ $e_{0}\in \text{ran}({S^{w}}(\delta_i)^{l}),$ and thus $e_{0}\in R^{\infty}(S^{w}_{n}).$ It follows that  $N(\widetilde{S^{w}})=\bigoplus_{i=1}^{n}\text{span}\{\delta_i \ot e_{o}\}\subseteq E \ot \bigoplus_{i=1}^{n}R^{\infty}(S^{w}_{i})$,  that is,   $(\rho, S^w)$ is regular.  Furthermore, based on the previous obeservation for $V,$  $\widetilde{S}^{w\dagger}$ is  contraction,  and thus   $\gamma(\widetilde{S}^{w})\geq 1.$  Next,  we must determine  under what condition  on the set $\{w_{i,m} : 1\leq i \leq n,\:\:\:m\in \mathbb{Z} \},$    $(\rho, S^{w})$ satisfies the growth condition (\ref{2.7}).

\begin{theorem}\label{SSS}   
	Let $(\rho, S^{w})$ be a  bilateral weighted shift with weight $\{w_{i,m} : \: i \in I_n,\:m\in \mathbb{Z} \}$ on a separable Hilbert space $\mathcal{H}$  such that  \begin{enumerate}
		\item [(i)] for each $i\in I_{n},$  $w_{i,m}= 1$ for all $m <0,$  $w_{i,0}=0$ and $w_{i,m}\geq 1$ for every $m>0.$ 
		\item [(ii)]
		$\big(w_{i_{k},m}^{2}\prod_{q=0}^{k-1}w_{i_{k-q},n_{k-q+1,k}}^{2}\big)-1\leq d_{k}\big({w_{i_{k},m}^{2}}-1\big),$ where $(d_{k})_{k \in \mathbb{N}}$ is positive sequence along with $\sum_{k \geq 2}$ $\frac{1}{d_k} = \infty$ and  $n_{p,k}=\sum_{l=p}^{k}n^{l-p}i_{l}+n^{k-p+1}m.$
	\end{enumerate} Then 
	\begin{enumerate}
		\item $(\rho, S^{w})$ satisfies the assumption of Theorem \ref{TT}, 
		\item $S^{w\dagger}$ is a regular contraction,
		\item  	 the subspace  $\bigvee_{i=1}^{n}\big\{\big\{e_{\sum_{k=0}^{l-1}i n^{k}+n^{l}m}\::\:m\in \mathbb{Z}\setminus A_{i}, l \in \mathbb{N}   \big\}  \setminus \{e_0\}\big\}$
		reduces  $(\rho, S^{w})$ and  the restriction of $(\rho, {S^{w}})$  on this subspace   is isometric and fully co-isometric.
		
	\end{enumerate} 
\end{theorem}

\begin{proof}
	
	Since $w_{i,m}= 1$ for all $m <0,$   $w_{i_{k},m}^{2}\prod_{q=0}^{k-1}w_{i_{k-q},n_{k-q+1,k}}^{2}$ is either   $1$ or $0$ for all $m<0$ and $ k\geq 2.$ Then, we get 
	\begin{equation}\label{PPP} 
	\big(w_{i_{k},m}^{2}\prod_{q=0}^{k-1}w_{i_{k-q},n_{k-q+1,k}}^{2}\big)-1\leq d_{k}\big({w_{i_{k},m}^{2}}-1\big),\:\:\:\:   k \geq 2 , m\in\mathbb{Z}\setminus\{0\}.
	\end{equation}
	For  $k\in \mathbb{N},$ $
	E^{\ot k} \ot \mathcal{H}=\overline{\text{span}}\{\bigotimes_{q=1}^{k}\delta_{i_{q}}\ot e_{m} \:\:\:: 1\leq {i_{1}},{i_{2}},\dots,{i_{k}}\leq n,\:\: m \in \mathbb{Z} \},$ by the previous discussion
	\begin{align*}
	\widetilde{S}^{w}_{k}(\eta_k) =\displaystyle\sum_{1\leq {i_{1}},{i_{2}},\dots,{i_{k}}\leq n,\:\: m \in \mathbb{Z}\setminus\{0\}} \alpha_{ {i_{1}},{i_{2}},\dots,{i_{k}},m}w_{i_{k},m}\big(\prod_{q=0}^{k-1}w_{i_{k-q},n_{k-q+1,k}}e_{n_{1,k}}\big),
	\end{align*} where  $\eta_k=\displaystyle\sum_{1\leq {i_{1}},{i_{2}},\dots,{i_{k}}\leq n,\:\: m \in \mathbb{Z}} \alpha_{ {i_{1}},{i_{2}},\dots,{i_{k}},m}\bigotimes_{q=1}^{k}\delta_{i_{q}}\ot e_{m}\in E^{\ot k}\ot \mathcal{H}$ and $n_{p,k}=\sum_{l=p}^{k}n^{l-p}i_{l}+n^{k-p+1}m.$  Then
	\begin{align*}
	\|\widetilde{S}^{w}_{k}(\eta_k)\|^{2}&=\displaystyle\sum_{1\leq {i_{1}},{i_{2}},\dots,{i_{k}}\leq n,\:\: m \in \mathbb{Z}\setminus\{0\}} |{\alpha_{ {i_{1}},{i_{2}},\dots,{i_{k}},m}|^{2}w_{i_{k},m}^{2}}\prod_{q=0}^{k-1}w_{i_{k-q},n_{k-q+1,k}}^{2}.
	\end{align*}  
	Also, note that 
	\begin{align*}
	(I_{E^{\ot {k-1}}}\ot \widetilde{S}^{w})(\eta_k)&=\displaystyle\sum_{1\leq {i_{1}},{i_{2}},\dots,{i_{k}}\leq n,\:\: m \in \mathbb{Z}} \alpha_{ {i_{1}},{i_{2}},\dots,{i_{k}},m}\bigotimes_{q=1}^{k-1}\delta_{i_{q}} \ot \widetilde{S}^{w}(\delta_{i_{k}} \ot e_{m})\\&=\displaystyle\sum_{1\leq {i_{1}},{i_{2}},\dots,{i_{k}}\leq n,\:\: m \in \mathbb{Z}} \alpha_{ {i_{1}},{i_{2}},\dots,{i_{k}},m}\bigotimes_{q=1}^{k-1}\delta_{i_{q}} \ot {S}^{w}_{i_{k}}( e_{m})\\&=\displaystyle\sum_{1\leq {i_{1}},{i_{2}},\dots,{i_{k}}\leq n,\:\: m \in \mathbb{Z}\setminus\{0\}} \alpha_{ {i_{1}},{i_{2}},\dots,{i_{k}},m}\bigotimes_{q=1}^{k-1}\delta_{i_{q}}\ot (w_{i_{k},m} e_{i_{k}+nm}),
	\end{align*} where $ m \in \mathbb{Z}\setminus\{0\}$ and $ \|(I_{E^{\ot {k-1}}}\ot \widetilde{S}^{w})(\eta_k)\|^{2}=\displaystyle\sum_{1\leq {i_{1}},{i_{2}},\dots,{i_{k}}\leq n,\:\: m \in \mathbb{Z}\setminus\{0\}}|{\alpha_{ {i_{1}},{i_{2}},\dots,{i_{k}},m}}|^{2}w_{i_{k},m}^{2}.$
	Similarly
	\begin{align*}
	(I_{E^{\ot {k-1}}}\ot \widetilde{S}^{w\dagger}\widetilde{S}^{w})(\eta_k)&=\displaystyle\sum_{1\leq {i_{1}},{i_{2}},\dots,{i_{k}}\leq n,\:\: m \in \mathbb{Z}\setminus\{0\}} \alpha_{ {i_{1}},{i_{2}},\dots,{i_{k}},m}\bigotimes_{q=1}^{k-1}\delta_{i_{q}} \ot \widetilde{S}^{w\dagger}(w_{i_{k},m} e_{i_{k}+nm})\\&=\displaystyle\sum_{1\leq {i_{1}},{i_{2}},\dots,{i_{k}}\leq n,\:\: m \in \mathbb{Z}\setminus\{0\}} \alpha_{ {i_{1}},{i_{2}},\dots,{i_{k}},m}\bigotimes_{q=1}^{k-1}\delta_{i_{q}} \ot (\delta_{i_{k}} \ot e_{m}).
	\end{align*}
	This  implies that $
	\|(I_{E^{\ot {k-1}}}\ot \widetilde{S}^{w\dagger}\widetilde{S}^{w})(\eta_k)\|^{2}=\displaystyle\sum_{1\leq {i_{1}},{i_{2}},\dots,{i_{k}}\leq n,\:\: m \in \mathbb{Z}\setminus\{0\}} |{\alpha_{ {i_{1}},{i_{2}},\dots,{i_{k}},m}}|^{2}.$

	Finally, it follows from the above and   by  using the Equation (\ref{PPP}), we obtain
	\begin{align*}
	\|\widetilde{S}^{w}_k (\eta_k)\|^2 \leq d_k \big(\| (I_{E^{\ot {k-1}}} \ot \widetilde{S}^{w}) (\eta_k)\|^2 -\|(I_{E^{\ot {k-1}}} \ot \widetilde{S}^{w\dagger} \widetilde{S}^{w}) (\eta_k)\|^2\big) + \|(I_{E^{\ot {k-1}}} \ot \widetilde{S}^{w\dagger} \widetilde{S}^{w}) (\eta_k)\|^2,
	\end{align*} which means $(\rho,S^{w})$ satisfies the growth condition and  it satisfies  the hypothesis of Theorem \ref{TT}.
	Since $ R^{\infty}(S^{w})$ and $ {\overline{\text{span}}} \{e_m  \::\: m\in \mathbb{Z} \setminus \{0\} \}$  are  both $(\rho, S^w)$ reducing subspaces,  $ R^{\infty}(S^{w}) \cap {\overline{\text{span}}} \{e_m \::\: m\in \mathbb{Z} \setminus \{0\} \}=\bigvee_{i=1}^{n}\big\{\big\{e_{\sum_{k=0}^{l-1}i n^{k}+n^{l}m}\::\:m\in \mathbb{Z}\setminus A_{i} , l \in \mathbb{N}  \big\}  \setminus \{e_0\}\big\}$ is also $(\rho, S^w)$ reducing. Observe that  $E \ot R^{\infty}(S^{w}))\cap N(\widetilde{S^{w}})^{\perp}=E \ot \bigvee_{i=1}^{n}\big\{\big\{e_{\sum_{k=0}^{l-1}i n^{k}+n^{l}m}\::\:m\in \mathbb{Z}\setminus A_{i}, l \in \mathbb{N}   \big\}  \setminus \{e_0\}\big\}$ and by using  part $5$ of Theorem \ref{TT}, the restriction of $(\rho, {S^{w}})$  on $\bigvee_{i=1}^{n}\big\{\big\{e_{\sum_{k=0}^{l-1}i n^{k}+n^{l}m}\::\:m\in \mathbb{Z}\setminus A_{i},l \in \mathbb{N}   \big\}  \setminus \{e_0\}\big\}$   is isometric and fully co-isometric. This proves  part 3 of the theorem.
\end{proof}
\begin{remark}
	Let $k \in \mathbb{N},$ define a set $\Gamma(k, I_n)=\{f \: | \: f: I_k \to I_n  \}$ and $\Gamma(0, I_n)=\{0\}.$   Consider the Hilbert space \begin{align*}
	\bigoplus_{f \in \Gamma(k,I_{n})}\mathcal{H}=\bigoplus_{j=1}^{n^{k}}\mathcal{H}=\bigoplus_{j=1}^{n^{k-1}}\big(\bigoplus_{i=1}^{n}\mathcal{H}\big)
	\end{align*} with an orthonormal basis $\{e_{m}^{(f,k)}\:\:|\:\:\ m \in \mathbb{Z},\:\: f \in \Gamma(k,I_{n})\},$ where $e_{m}^{(f,k)}:=e_m$ is the $f$ position in the Hilbert space $\bigoplus_{f \in \Gamma(k,I_{n})}\mathcal{H}.$  For  $k \in \mathbb{N}$ and $f \in \Gamma(k, I_n),$  define $V_f=V_{f(1)}V_{f(2)}\cdots V_{f(k)},$ where $V_0=I_{\mathcal{H}},$ and $V^k=(V_f)_{f \in\Gamma(k, I_n) }$ be a $|\Gamma(k, I_n)|$-tuple of bounded operators on the Hilbert space $\mathcal{H}.$ The operator $V^k$ can be consider as a bounded operator  from  $\bigoplus_{f \in \Gamma(k,I_{n})}\mathcal{H}$ to $\mathcal{H}.$
	
	Since   $ \text{ran}V_{i}$ is orthogonal to each other, $V^{\dagger}V=(V_1^{\dagger}V_1, V_2^{\dagger}V_2, \dots, V_n^{\dagger}V_n).$ For $k \in \mathbb{N},$ define $[V^{\dagger}V]_k= (V^{\dagger}V,  V^{\dagger}V, \dots, V^{\dagger}V ): \bigoplus_{j=1}^{n^{k-1}}(\bigoplus_{i=1}^{n}\mathcal{H}) \to \bigoplus_{j=1}^{n^{k-1}}(\bigoplus_{i=1}^{n}\mathcal{H})$ and $[V]_k=(V,V, \dots , V): \bigoplus_{j=1}^{n^{k-1}}(\bigoplus_{i=1}^{n}\mathcal{H}) \to \bigoplus_{j=1}^{n^{k-1}}\mathcal{H}$ are both $n^{k-1}$-tuples of bounded operators on the Hilbert sapce $\bigoplus_{i=1}^{n}\mathcal{H}.$
	Observe that for $f \in \Gamma(k, I_n),$ \begin{align*}
	V_{f}(e^{(f,k)}_m)&=V_{f(1)}V_{f(2)}\dots V_{f(k)}(e^{(f,k)}_m)=V_{f(1)}V_{f(2)}\dots V_{f(k-1)}w_{f(k),m}e_{f(k)+nm}\\&=V_{f(1)}V_{f(2)}\dots V_{f(k-2)}w_{f(k),m}w_{f(k-1),{f(k)+nm}}e_{f(k-1)+n({f(k)+nm})}\\&= \vdots\\&=\prod_{i=1}^{k}w_{f(k-i),\sum_{j=1}^{i}f(k-i+j)n^{j-1}+n^{i}m}e_{\sum_{l=0}^{k-1}n^{l}f(k-l)+n^{k}m}.
	\end{align*}
	Assumimg  Equation (\ref{PPP}) and by the previous equation, we  can verify   that 
	$$\|V^k(h_f)\|^2 \leq d_m(\|[V]_kh_f\|^2-\|[V^{\dagger}V]_kh_f\|^2)+\|[V^{\dagger}V]_kh_f\|^2),$$  for all $k \in \mathbb{N}$ and  $  h_f \in\bigoplus_{f \in \Gamma(k,I_{n})}\mathcal{H}=\bigoplus_{j=1}^{n^{k-1}}(\bigoplus_{i=1}^{n}\mathcal{H}) ,$ that is, the operator $V$ satisfies the growth condition \ref{2.7}.  Hence $V$  satisfies  the hypothesis of Theorem \ref{TT}.  In particular, if $n=1$ then  the subspace $\overline{\text{span}}\{e_m\: : \: m <0\}  $  is reducing for $V$ and  the restriction of V on the subspace  $\overline{\text{span}}\{e_m\: : \: m <0\}  $ is unitary $($see \cite[Proposition  9]{EMZ15}$)$.  Note that,  the number $n$ may chosen to be $\infty$ (infinity) also.

	
\end{remark}
\paragraph{$\mathbf{Acknowledgment}$} We are thankful  to the referee for the valuable sugesstions and comments. Azad Rohilla is supported by a UGC fellowship (File No: 16-6(DEC.2017)
/2018(NET/CSIR)). Shankar Veerabathiran thanks ISI Bangalore for  Visiting Scientist position. Harsh Trivedi is supported by MATRICS-SERB  
Research Grant, File No: MTR/2021/000286, by the Science and Engineering Research Board
(SERB), Department of Science \& Technology (DST), Government of India. We acknowledge the Centre for Mathematical \& Financial Computing and the DST-FIST grant for the financial support for the computing lab facility under the scheme FIST ( File No: SR/FST/MS-I/2018/24) at the LNMIIT, Jaipur.

\end{document}